\title{Airy structures and deformations of curves in surfaces}
\author{W. Chaimanowong} \email{cnw@student.unimelb.edu.au}
\author{P. Norbury} 
\email{norbury@unimelb.edu.au}
\author{M. Swaddle} 
\email{meswaddle@protonmail.com}
\author{M. Tavakol} \email{mehdi.tavakol@unimelb.edu.au}
\address{School of Mathematics and Statistics, University of Melbourne, VIC 3010, Australia}
\date{\today}
\begin{document}
    
    \begin{abstract}
        An embedded curve in a symplectic surface $\Sigma\subset X$ defines a smooth deformation space $\cb$ of nearby embedded curves.  A key idea of Kontsevich and Soibelman \cite{KSoAir} is to equip the symplectic surface $X$ with a foliation in order to study the deformation space $\cb$.   The foliation, together with a vector space $V_\Sigma$ of meromorphic differentials on $\Sigma$, endows an embedded curve $\Sigma$ with the structure of the initial data of topological recursion, which defines a collection of symmetric tensors on $V_\Sigma$.   Kontsevich and Soibelman define an Airy structure on $V_\Sigma$ to be a formal quadratic Lagrangian $\cl\subset T^*(V_\Sigma^*)$ which leads to an alternative construction of the tensors of topological recursion.  In this paper we produce a formal series $\theta$ on $\cb$ 
which takes it values in $\cl$, and use this to produce the Donagi-Markman cubic  from a natural cubic tensor on $V_\Sigma$, giving a generalisation of a result of Baraglia and Huang \cite{BHuSpe}.

    \end{abstract}
    
    \maketitle
    
    \renewcommand{\baselinestretch}{0.75}\normalsize
    \tableofcontents
    \renewcommand{\baselinestretch}{1.0}\normalsize    
    \section{Introduction}

Consider a smooth algebraic curve $\Sigma$ embedded in a symplectic algebraic surface $X$.  The purpose of this paper is to study the relation of the local deformation space of $\Sigma$ to topological recursion following Kontsevich and Soibelman \cite{KSoAir}.  

The local deformation space $\cb$ of $\Sigma$ parametrises embeddings of smooth curves near to $\Sigma\subset X$.  It is a smooth complex analytic space of dimension equal to the genus of $\Sigma$.  Represent $\Sigma\subset X$ by the point $[\Sigma]\in\cb$.  Over $\cb$ is a flat symplectic bundle $\ch\to\cb$ with fibres $H^1(\Sigma;\bc)$ and equipped with the Gauss-Manin connection $\nabla^{\text{GM}}$.  The natural linear embedding $H^0(\Sigma,K_\Sigma)\subset H^1(\Sigma;\bc)$, that sends a holomorphic differential to its cohomology class,  defines a Lagrangian subbundle of $\ch$ with fibres $H^0(\Sigma,K_\Sigma)$.  The symplectic structure on $X$ defines the exact sequence  
\begin{equation}  \label{ext}
    0 \rightarrow T \Sigma \rightarrow TX|_\Sigma \rightarrow K_\Sigma \rightarrow 0.
\end{equation}
In particular, the normal bundle $\nu_\Sigma$ to $\Sigma\subset X$ is isomorphic to $K_\Sigma$,
hence the tangent space $H^0(\Sigma,\nu_\Sigma)$ to $\cb$ at $[\Sigma]$ is isomorphic to the vector space of holomorphic differentials $H^0(\Sigma,K_\Sigma)$. The isomorphism is denoted 
\[\phi:T_{[\Sigma]}\cb\stackrel{\cong}{\longrightarrow} H^0(\Sigma,K_\Sigma)\]
which defines an $\ch$-valued 1-form $\phi\in\Gamma(\cb,\Omega^1_\cb\otimes\ch)$.  One can realise $\phi$ via variations of a section 
\[
[\theta]\in\Gamma(U_{[\Sigma]},\ch)
\]
where $U_{[\Sigma]}\subset\cb$ is a neighbourhood of $[\Sigma]$, and $[\theta]$ is
characterised by
\begin{equation}  \label{theta}
\phi=\nabla^{\text{GM}}[\theta],\qquad [\theta]([\Sigma])=0.
\end{equation}
A construction of $[\theta]$ is given in \eqref{cohtheta} in Section~\ref{sec:TR}.
Hence for $[\Sigma']\in U_{[\Sigma]}$ and any $v\in T_{[\Sigma']}\cb$, 
$\phi(v)=\nabla_v^{\text{GM}}[\theta]\in H^0(\Sigma',K_{\Sigma'})$
which defines $[\theta]$ uniquely up to addition of a constant section, and the ambiguity is removed by setting $[\theta]([\Sigma])=0$.   One can equivalently define $[\theta]$ via parallel transport of the flat connection on $\ch$ given by $\nabla^{\text{GM}}+\phi$.  The property $\nabla_v^{\text{GM}}[\theta]\in H^0(\Sigma',K_{\Sigma'})$ is a cohomological version of the property of a Seiberg-Witten differential. 

Equip $X$ with a Lagrangian foliation $\cf$, or equivalently a holomorphic sub-line-bundle $L_\cf\subset TX$.  More generally, the foliation may be singular at finitely many points so $L_\cf\subset TX$ is a subsheaf and $\Sigma$ is chosen to avoid these singular points.  For example, if the foliation is defined by the fibres of a morphism $\pi:X\to C$ to a curve $C$, then $L_\cf=\ker D\pi$ is not locally free at the critical points of $\pi$ contained in the singular fibres.  Define $R\subset\Sigma$ to be those points where $\Sigma$ meets $\cf$ tangentially, so by \eqref{ext} $L_\cf|_\Sigma\cong K_\Sigma(-R)$. 
Furthermore, we choose $\Sigma$ so that $R\subset \Sigma$ is finite and each tangent point is simple. 
The simple tangency condition is an open condition hence also true of any nearby curve $\Sigma'$ in the family $\cb$ and defines $R'\subset \Sigma'$.  A key idea of Kontsevich and Soibelman in \cite{KSoAir} is to use the Lagrangian foliation $\cf$ to study the deformation space $\cb$ via lifting cohomology classes in $H^1(\Sigma;\bc)$ to meromorphic differentials on $\Sigma$ with poles at $R\subset\Sigma$.  Define $G_\Sigma$ to be the vector space of residueless meromorphic differentials on $\Sigma$, holomorphic on $\Sigma-R$, and $\bg\to\cb$ the bundle with fibres $G_\Sigma$.  The map $G_\Sigma\to H^1(\Sigma,\bc)$ which sends a differential to its cohomology class is surjective and induces the surjective map of vector bundles $\bg\to\ch$.  

 Topological recursion, as defined by Eynard and Orantin \cite{eynard_orantin}, is a recursive procedure that produces from a spectral curve $S=(\Sigma,u,v,B)$ a symmetric tensor product of meromorphic 1-forms $\omega_{h,n}$ on $\Sigma^n$ for integers $h\geq 0$ and $n\geq 1$, which we refer to as  correlators.   Here, a spectral curve, $S=(\Sigma,u,v,B)$ is a curve $\Sigma$ equipped with two meromorphic functions $u, v: \Sigma\to \mathbb{C}$ holomorphic in a neighbourhood of points where $du=0$ and a bidifferential $B(p_1,p_2)$ defined in \eqref{bidiff}, such that $du$ has only simple zeros. More generally, $u$ and $v$ need only to be locally defined.

A curve $\Sigma\subset (X,\cf)$ together with a choice of $a$ and $b$-cycles that form a Torelli basis $\{a_1,...,a_g,b_1,...,b_g\}\subset H_1(\Sigma;\bz)$ produces a spectral curve $S=(\Sigma,u,v,B)$ and hence the initial data of topological recursion.  The $a$ and $b$-cycles on $\Sigma$ uniquely determine a bidifferential $B(p_1,p_2)$ defined on $\Sigma$---see \eqref{bidiff}. The locally defined functions are restrictions to $\Sigma$ of locally defined coordinates $u$ and $v$ on $X$ chosen so that the symplectic form on $X$ is $\omega=du\wedge dv$ and the leaves of the foliation are defined via $u=$ constant---denoted {\em foliation-Darboux} coordinates in Definition~\ref{FDcoord}.    

The choice of $a$-cycles 
defines 
\[ V_\Sigma=\{\eta\in G_\Sigma\mid\oint_{a_i}\eta=0,i=1,...,g\}\]
which has trivial intersection with $H^0(\Sigma,K_\Sigma)$ since non-trivial holomorphic differentials cannot have zero $a$-periods.  
The choice of Torelli basis extends to a well-defined choice on a neighbourhood $U_{[\Sigma]}\subset\cb$ of $[\Sigma]$, hence $V_\Sigma$ defines a subbundle of $\bg$ on $U_{[\Sigma]}$.
The correlator $\omega_{h,n}(p_1,p_2,...,p_n)$ is symmetric in $p_i$ and has poles only at $p_i\in R\subset\Sigma$ with zero residues and vanishing $a$-periods.  In other words it lives in the $n$-th symmetric power
\[\omega_{h,n}(p_1,p_2,...,p_n)\in \mathrm{S}^n(V_\Sigma)\]
where \(\mathrm{S}^n\) is the \(n\)-th symmetric algebra.

For any residueless meromorphic differential $\eta$ defined on $\Sigma$, denote its normalised periods by:
\begin{equation}   \label{bnorm}
\oint_{\hat{b}_k}\eta:=-\frac{1}{2\pi i}\oint_{b_k}\eta.
\end{equation}
For $[\Sigma]\in\cb$, the functions \[z^k=\oint_{a_k}[\theta],\quad k=1,...,g\] define coordinates on a neighbourhood $U_{[\Sigma]}\subset\cb$ and also on a formal neighbourhood $\widehat{\cb}_{[\Sigma]}$ of $[\Sigma]\in\cb$. 

Given a curve $\Sigma\subset (X,\cf)$ together with a choice of $a$ and $b$-cycles, which determine normalised holomorphic differentials $\omega_i\in H^0(\Sigma,K_\Sigma)$ and correlators $\omega_{h,n}$, the following theorem constructs a formal series of meromorphic differentials that lives above the local analytic expansion of the section $[\theta]$. 
\begin{theorem}     \label{thetatheorem}
Define a section $\theta\in\Gamma(\widehat{\cb}_{[\Sigma]},G_\Sigma)$ by
\begin{equation}  \label{thetaTR1}   
\theta=z^i\omega_i-\frac12z^iz^j\oint_{\hat{b}_i}\oint_{\hat{b}_j}\omega_{0,3}-\frac{1}{3!}z^iz^jz^k\oint_{\hat{b}_i}\oint_{\hat{b}_j}\oint_{\hat{b}_k}\omega_{0,4}-...
\end{equation}
where we sum over indices in $\{1,...,g\}$.  Its cohomology class in $\Gamma(\widehat{\cb}_{[\Sigma]},\ch)$ is analytic in $z^1,...,z^g$ and coincides with the analytic expansion of $[\theta]$ defined in \eqref{theta}.
\end{theorem}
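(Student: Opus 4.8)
\emph{Proof proposal.} The plan is to characterise $[\theta]$ by the first‑order system \eqref{theta} and to check that the cohomology class of the explicit series \eqref{thetaTR1} satisfies the same system as a formal power series; uniqueness of the formal solution, combined with the analyticity of the genuine section $[\theta]$, then yields both assertions simultaneously. Since $z^k=\oint_{a_k}[\theta]$ are coordinates, the relation $\phi=\nabla^{\text{GM}}[\theta]$ forces $\phi(\partial_{z^l})$ to be the unique holomorphic differential with $\oint_{a_k}\phi(\partial_{z^l})=\partial_{z^l}z^k=\delta^k_l$, namely the normalised differential $\omega_l$. It therefore suffices to show, as formal series in $z$, that the class of \eqref{thetaTR1} vanishes at $z=0$ and that $\nabla^{\text{GM}}_{\partial_{z^l}}[\theta]=[\omega_l]$ for every $l$.

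In the flat trivialisation of $\ch$ given by the locally constant frame dual to $\{a_i,b_i\}$ the connection $\nabla^{\text{GM}}$ becomes $d$, and a class is recorded by its periods; thus $\nabla^{\text{GM}}_{\partial_{z^l}}[\theta]=[\omega_l]$ is equivalent to $\partial_{z^l}\oint_{a_k}\theta=\delta_{kl}$ and $\partial_{z^l}\oint_{b_k}\theta=\tau_{kl}$, where $\tau$ is the period matrix of $\Sigma'$. The initial condition is immediate since \eqref{thetaTR1} has no constant term. For the $a$-periods, $\oint_{a_k}\omega_i=\delta_{ki}$ while every correlator lies in $\mathrm{S}^n(V_\Sigma)$ and hence has vanishing $a$-periods in each argument; therefore $\oint_{a_k}\theta=z^k$ identically, which gives $\partial_{z^l}\oint_{a_k}\theta=\delta_{kl}$ and confirms that the coordinates $z^k$ are consistent with \eqref{theta}.

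The heart of the argument is the $b$-period identity, which I would prove at the level of differentials. Using $\oint_{\hat{b}_i}\omega_{0,2}=-\omega_i$ for the bidifferential \eqref{bidiff}, the series \eqref{thetaTR1} rewrites uniformly as $\theta=-\sum_{n\geq 1}\tfrac{1}{n!}\,z^{i_1}\cdots z^{i_n}\oint_{\hat{b}_{i_1}}\cdots\oint_{\hat{b}_{i_n}}\omega_{0,n+1}$. Differentiating in $z^l$ produces two families of terms: differentiating a monomial brings down a factor $n$ and inserts one integration $\oint_{\hat{b}_l}$, while differentiating a coefficient is governed by the variational (``special geometry'') formula $\partial_{z^l}\omega_{0,m}=-\oint_{\hat{b}_l}\omega_{0,m+1}$, whose sign comes from the fact that moving in the $z^l$ direction deforms the spectral curve by the form $\omega_l$, whose form--cycle dual is $\tfrac{1}{2\pi i}b_l=-\hat{b}_l$. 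By symmetry of the correlators the two families agree term by term with opposite signs, so all contributions of positive degree in $z$ cancel, leaving only the degree-zero term $-\oint_{\hat{b}_l}\omega_{0,2}=\omega_l$. Hence $\partial_{z^l}\theta=\omega_l$ as a section of $\bg$, and projecting to $\ch$ gives $\nabla^{\text{GM}}_{\partial_{z^l}}[\theta]=[\omega_l]$, in particular $\partial_{z^l}\oint_{b_k}\theta=\tau_{kl}$.

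The step I expect to require the most care, and which carries the real content, is the variational formula $\partial_{z^l}\omega_{0,m}=-\oint_{\hat{b}_l}\omega_{0,m+1}$ together with the compatibility of the natural connection on $\bg$ with $\nabla^{\text{GM}}$ on $\ch$ (so that $\partial_{z^l}$ commutes with integration over the flat cycles and projects correctly to Gauss--Manin); pinning down the precise sign and normalisation relative to \eqref{bnorm} and to the filling-fraction coordinates $z^l$ is where the subtlety lies, the remaining combinatorics being the formal telescoping above. Granting this, the class of \eqref{thetaTR1} satisfies the same first-order system \eqref{theta}, with the same initial value, as the analytic section $[\theta]$; since that system determines its formal solution uniquely order by order, the formal series of classes must equal the Taylor expansion of the analytic $[\theta]$, proving both that it is analytic in $z^1,\dots,z^g$ and that it coincides with $[\theta]$.
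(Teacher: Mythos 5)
Your overall plan---characterise $[\theta]$ by the system \eqref{theta} with initial condition $[\theta]([\Sigma])=0$, verify that the class of the series \eqref{thetaTR1} satisfies the same system as a formal series, and conclude by uniqueness---is sound and close in spirit to the paper's argument, and your $a$-period and initial-condition checks are correct. You have also correctly located where the content lies: in a variational formula for the correlators and in the compatibility of a connection on $\bg$ with $\nabla^{\text{GM}}$. The problem is that the step you build on top of these, the telescoping computation of $\partial_{z^l}\theta$, does not go through as written.

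The coefficients $\oint_{\hat{b}_I}\omega_{0,|I|+1}$ in \eqref{thetaTR1} are elements of the \emph{fixed} fibre $G_\Sigma$, i.e.\ correlators of the base curve evaluated at $z=0$; they do not depend on $z$. Differentiating the series therefore only hits the monomials and yields the full formal series $\widehat{\omega}_l$ of \eqref{holdifser}, not the single term $\omega_l$: there is no second family of terms to cancel against. If you instead let the coefficients vary with $z$ (correlators of $\Sigma'$), you are differentiating a different series from \eqref{thetaTR1}, and in any case the variational formula enters with the sign $\nabla^\cf_l\omega_{0,m}=-\tfrac{1}{2\pi i}\oint_{b_l}\omega_{0,m+1}=+\oint_{\hat{b}_l}\omega_{0,m+1}$ (this is \eqref{EOvar}, proved in Appendix~\ref{variation}), opposite to the $-\oint_{\hat{b}_l}\omega_{0,m+1}$ you assume, so the two families add rather than cancel. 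What actually has to be proved is that the constant-coefficient series $\widehat{\omega}_l=\partial_{z^l}\theta$ is the Taylor expansion of the analytic family $[\omega_l(\Sigma')]$---equivalently that $\partial_{z^l}\oint_{b_k}\theta=\tau_{kl}(z)$ as a formal series, the right-hand side of your ODE being itself a nonconstant series. This is exactly the iterated variational formula $\nabla^\cf_I\omega_l=-\oint_{\hat{b}_l}\oint_{\hat{b}_I}\omega_{0,|I|+2}$ taken with respect to the flat connection $\nabla^\cf$ determined by the foliation, together with $[\nabla^\cf\eta]=\nabla^{\text{GM}}[\eta]$ from \eqref{connFGM}; both require constructing $\nabla^\cf$ and establishing $\nabla^\cf_i(vdu)=-\omega_i$ (which is where the normalisation $z^i=\oint_{a_i}[\theta]$ enters), and these are precisely the items you defer. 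Without a specified connection the ``Taylor expansion'' of a family of meromorphic differentials is not even well defined, since it depends on the trivialisation of $\bg$. The paper supplies these ingredients in Section~\ref{formalconv} and Appendix~\ref{variation}, after which the identification of \eqref{thetaTR1} with $vdu|_\Sigma-\iota^*(vdu)$ makes the verification of \eqref{theta} immediate.
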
  
We use the convention of summation over repeated indices and throughout the paper, except when we wish to emphasise the indices.
Note that the cohomology class $[\theta]$ can be naturally expressed by its periods
\[[\theta]=(\oint_{a_i}[\theta],\oint_{b_i}[\theta]\mid i=1,...,g)\in\bc^{2g} \llbracket z^1,...,z^g \rrbracket .\]
Properties of the series \eqref{thetaTR1} leads to relations among residues and periods of $\omega_{0,h}$, such as \eqref{relat} and \eqref{relat2}.

The series $\theta$ is a formal expansion  of the Seiberg-Witten differential in the Seiberg-Witten family of curves \cite{NOkSei}.  Similarly, in the case $X=T^*C$ the formal series $\theta$ is a formal expansion of the tautological 1-form on $T^*C$---see \eqref{eq:thetader} in Section~\ref{formalconv} for a precise statement. An analytic expansion of the Seiberg-Witten differential or tautological 1-form would require a natural local trivialisation of the bundle $\bg$.  The foliation produces a flat connection on $\bg$, defined in Section~\ref{formalconv}, however this does not produces a local trivialisation since parallel transport  for this connection is not defined.

The period matrix $\tau_{ij}$ of $\Sigma$ appears in the first order terms of $[\theta]$ via \[\oint_{b_i}[\theta]=z^j\tau_{ij}|_\Sigma+ \text{ higher order terms}\]
which leads to the following corollary.
\begin{corollary}  \label{om3cor}
The variation of the period matrix of a curve $\Sigma\subset X$ is
\[ \frac{\partial\tau_{ij}}{\partial z^k}= -2\pi i \oint_{\hat{b}_i}\oint_{\hat{b}_j}\oint_{\hat{b}_k}\omega_{0,3}\]
and more generally
\[\frac{\partial^n\tau_{ij}}{\partial z^{i_1}..\partial z^{i_n}}=-2\pi i \oint_{\hat{b}_i}\oint_{\hat{b}_j}\oint_{\hat{b}_{i_1}}...\oint_{\hat{b}_{i_n}}\omega_{0,n+2}.\] 
\end{corollary}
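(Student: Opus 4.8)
The plan is to read the corollary off from Theorem~\ref{thetatheorem} together with the defining property \eqref{theta} of $[\theta]$. The key observation is that the function $\oint_{b_i}[\theta]$ on the formal neighbourhood $\widehat{\cb}_{[\Sigma]}$ is a generating series whose successive $z$-derivatives reproduce the period matrix $\tau_{ij}$ and all of its variations, while Theorem~\ref{thetatheorem} expresses the very same function through normalised $b$-periods of the correlators $\omega_{0,n}$. Equating the two descriptions produces the stated formulae.

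First I would establish the differential relation
\[
\frac{\partial}{\partial z^a}\oint_{b_i}[\theta]=\tau_{ia},
\]
as an identity of analytic functions on $U_{[\Sigma]}$. Since $\nabla^{\text{GM}}$ commutes with integration over the (flatly transported) cycles $a_k,b_i$, differentiating the coordinate $z^k=\oint_{a_k}[\theta]$ gives
\[
\delta^k_a=\frac{\partial}{\partial z^a}\oint_{a_k}[\theta]=\oint_{a_k}\nabla^{\text{GM}}_{\partial_{z^a}}[\theta]=\oint_{a_k}\phi(\partial_{z^a}),
\]
so by \eqref{theta} the holomorphic differential $\phi(\partial_{z^a})\in H^0(\Sigma',K_{\Sigma'})$ has $a_k$-periods $\delta^k_a$ and is therefore the normalised differential $\omega_a$ of the nearby curve $\Sigma'$. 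Integrating over $b_i$ then yields $\partial_{z^a}\oint_{b_i}[\theta]=\oint_{b_i}\omega_a=\tau_{ia}$, the period matrix of $\Sigma'$ viewed as a function on $U_{[\Sigma]}$.

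Next I would compute $\oint_{b_i}[\theta]$ from the explicit series. By Theorem~\ref{thetatheorem} the cohomology class of $\theta$ equals $[\theta]$, so integrating \eqref{thetaTR1} over $b_i$ and rewriting the resulting period of the output differential through \eqref{bnorm} produces
\[
\oint_{b_i}[\theta]=z^a\tau_{ia}|_\Sigma-2\pi i\sum_{n\geq 2}\frac{1}{n!}\,z^{i_1}\cdots z^{i_n}\oint_{\hat{b}_i}\oint_{\hat{b}_{i_1}}\cdots\oint_{\hat{b}_{i_n}}\omega_{0,n+1},
\]
where the linear term uses $\oint_{b_i}\omega_a=\tau_{ia}|_\Sigma$ on the fixed base curve, and the factor $-2\pi i$ together with the overall sign is the one dictated by \eqref{bnorm}. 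Combining with the previous step, $\tau_{ij}=\partial_{z^j}\oint_{b_i}[\theta]$, and hence
\[
\frac{\partial^n\tau_{ij}}{\partial z^{i_1}\cdots\partial z^{i_n}}=\frac{\partial^{n+1}}{\partial z^j\partial z^{i_1}\cdots\partial z^{i_n}}\oint_{b_i}[\theta]\Big|_{z=0}.
\]
The $(n+1)$-st derivative extracts $(n+1)!$ times the symmetric coefficient of $z^jz^{i_1}\cdots z^{i_n}$; since $\omega_{0,n+2}$ is a symmetric tensor this coefficient is already symmetric, the factorials cancel, and one obtains the general formula, the case $n=1$ giving the $\omega_{0,3}$ statement.

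The main obstacle is bookkeeping rather than any deep input: one must match the symmetrisation produced by the $(n+1)$ partial derivatives against the combinatorial weights $1/n!$ in \eqref{thetaTR1}, and one must keep in mind that in the linear term the $\omega_a$ are differentials of the \emph{fixed} curve $\Sigma$ (so their $b_i$-periods are the constants $\tau_{ia}|_\Sigma$), whereas the entire $z$-dependence of the period matrix is carried by the higher-order terms of the series. A secondary point, needed to give the relation $\partial_{z^a}\oint_{b_i}[\theta]=\tau_{ia}$ its meaning, is the identification $\phi(\partial_{z^a})=\omega_a$ on the varying curve together with the commutation of $\nabla^{\text{GM}}$ with period integration; both are standard but should be recorded explicitly, and they also fix the precise sign in concert with \eqref{bnorm}.
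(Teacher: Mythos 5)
Your proof is correct and takes essentially the same route as the paper: the paper likewise derives $\partial_{z^j}\oint_{b_i}[\theta]=\tau_{ij}$ from $\nabla^{\mathrm{GM}}[\theta]=\phi$ in Section~\ref{sec:def}, records the $b$-period of the series \eqref{thetaTR1} as Corollary~\ref{bper}, and then differentiates the Taylor expansion. The only caveat is a sign in your intermediate display for $\oint_{b_i}[\theta]$ (the convention \eqref{bnorm} as literally written would put $+2\pi i$ rather than $-2\pi i$ in front of the sum), but this same bookkeeping ambiguity occurs in the paper's own passage from Corollary~\ref{bper} to Corollary~\ref{om3cor} and does not affect the structure of the argument.
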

When $X=T^*C$, Corollary~\ref{om3cor} was proven by Baraglia and Huang in \cite{BHuSpe}, and Bertola and Korotkin \cite{BKoSpa}. 

Corollary~\ref{om3cor} can be packaged into an expression for an analytic expansion of the prepotential $F_0:\cb\to\bc$ defined in Section~\ref{sec:def}, such as the Seiberg-Witten prepotential \cite{NOkSei}, in terms of periods of the correlators:
\begin{equation}  \label{prepot}
    F_0(z^1,...,z^g)= -\sum_{I}\frac{z^I}{|I|!}\oint_{\hat{b}_I}\omega_{0,|I|}.
\end{equation}
The summation is over multi-indices $I=(i_1,...,i_n)$, for $i_k\in\{1,...,g\}$ and the integral is $\oint_{\hat{b}_I}\omega_{0,|I|}=\oint_{\hat{b}_{i_1}}...\oint_{\hat{b}_{i_n}}\omega_{0,n}(p_1,...,p_n)$.

Note that a choice of $a$-cycles on $\Sigma\subset(X,\Omega)$ determines local coordinates $\{z^1,...,z^g\}$ on the deformation space $\cb$ of $\Sigma$ and the prepotential $F_0:\cb\to\bc$, well-defined up to quadratic terms in $z^i$.  Moreover, $F_0$ depends (up to quadratic terms) only on the linear sub-module $L_a\subset H_1(\Sigma;\bz)$ spanned by the $a$-cycles. This is reflected clearly in \eqref{prepot} since the difference between two choices of $b$-cycles is an element of $L_a$.  The correlators $\omega_{0,|I|}$ in \eqref{prepot} vanish on $L_a$ for $|I|\geq 3$ so only the quadratic term involving $\omega_{0,2}$ detects a change in $b$-cycles.

The series $\theta$ defined in \eqref{thetaTR1} has a geometric interpretation which we now describe.  Kontsevich and Soibelman \cite{KSoAir} formulated topological recursion in terms of an {\em Airy structure} which characterises a quadratic Lagrangian $\cl\subset W$ in a symplectic vector space, i.e. a sub-variety defined by polynomials of degree $\leq 2$.  A basic example is the plane conic tangent to the line $y=0$:
\[\cl=\{-y+ax^2+2bxy+cy^2=0\}\subset\bc^2.\]
More generally, consider a finite-dimensional symplectic vector space $W\cong\bc^{2N}$ and a quadratic Lagrangian $\cl\subset W$ containing $0$, with tangent space $L=T_0\cl$.  Choose a Lagrangian complement $V$ to $L$ in $W$
    \[W=L\oplus V.\]
Note that the exact sequence $L\to W\stackrel{\Omega_W(\cdot,\cdot)}{\to} L^*$ produces a canonical isomorphism $V\cong L^*$ hence a canonical isomorphism $W\cong V^*\oplus V=T^*(V^*)$ so that $V$ is a polarisation of $W$. 

Choose Darboux coordinates $\{x^i,y_i\}_{i=1,...,N}\in W^*$, i.e. $\Omega_W=dx^i\wedge dy_i$, with $x^i\in V$ and $y_i\in L$ which are naturally coordinates on the base $V^*$, respectively fibre $V$, of $T^*(V^*)$.
We have $L=\{y_i=0\}$ and the Lagrangian $\cl$ is defined by
    \[\cl=\{H_i=0\mid i=1,...,N\}\]
    for
    \[ H_i=-y_i+a_{ijk}x^jx^k+b_{ij}^kx^jy_k+c_i^{jk}y_jy_k,\quad i=1,...,N.
    \]

    The coefficients of the $H_i$ are tensors on $V$.
    \begin{alignat*}{3}
    A &=(a_{ijk})  &&\in V\otimes V\otimes V,\nonumber\\
    B &=(b_{ij}^k) &&\in V^*\otimes V\otimes V,\\   
    C &=(c_i^{jk}) &&\in V^*\otimes V^*\otimes V,\nonumber
    \end{alignat*}
    where $\displaystyle(a_{ijk}):=a_{ijk}x^ix^jx^k$, $\displaystyle(b_{ij}^k):=b_{ij}^kx^ix^jy_k$ and $\displaystyle(c_i^{jk}):=c_i^{jk}x^iy_jy_k$ and as usual we sum over the indices $i,j,k$.

    The defining functions of the Lagrangian submanifold satisfy
    \begin{equation}  \label{hamlag}
    \{H_i,H_j\}=g_{ij}^kH_k
    \end{equation} 
    where $g_{ij}^k$ are functions in general, but numbers here since $H_i$ are quadratic. The relation \eqref{hamlag} implies a collection of conditions on the tensors $A$, $B$ and $C$. The linear term contribution to \eqref{hamlag} implies $A\in \mathrm{S}^3(V)$, whereas {\em a priori} $A$ is symmetric only in its final two arguments, and it also implies $g_{ij}^k=2b_{ji}^k-2b_{ij}^k$. The remaining conditions, corresponding respectively to vanishing of coefficients $x^kx^m$, $x^ky^m$ and $y^ky^m$ in \eqref{hamlag}, are homogeneous of degree two in the tensors and given explicitly in Definition~\ref{defairy}.
    
Kontsevich and Soibelman define an Airy structure to be a collection of tensors on a  vector space $V$:
\begin{align*}
    A\in \mathrm{Sym}^3(V),\quad B\in V^{*}\otimes V\otimes V,\quad C\in \mathrm{Sym}^2(V^{*})\otimes V,
\end{align*}
satisfying the quadratic relationships implied by \eqref{hamlag}.  An Airy structure makes sense for infinite dimensional $V$.  In finite dimensions an Airy structure is equivalent to a quadratic Lagrangian submanifold of the symplectic vector space $T^*(V^*)$, while in infinite dimensions it corresponds to a formal Lagrangian submanifold defined by the ideal generated by a collection of quadratic polynomials $H_i$, $i=1,2,....$---see Section~\ref{airystr}.

A fundamental example of a formal Lagrangian subvariety in an infinite dimensional symplectic vector space 
 arises from Virasoro relations satisfied by the Kontsevich-Witten tau function of the KdV hierarchy
$$ Z^{\text{KW}}(\hbar,x^1,x^3,...)=\exp \left( \sum_{h,n,\vec{k}}\frac{\hbar^{h-1}}{n!}\int_{\overline{\cm}_{h,n}} \prod_{i=1}^n\psi_i^{k_i}(2k_i+1)!!x^{2k_i+1}\right)
$$
which is a generating function for intersection numbers of tautological classes $\psi_i$ on the moduli space of stable curves.   Define 
$\{L_{-1}, L_0, L_1, \ldots\}$ which satisfy the Virasoro commutation relations
\[
[L_m, L_n] = (m-n) L_{m+n}, \quad \text{for } m, n \geq -1
\]
by
$$L_m=-\frac{1}{2}\frac{\partial}{\partial x^{2m+3}} +\frac{\hbar}{4} \hspace{-1mm}\mathop{\sum_{i+j=2m}}_{i,j\text{ odd}} \hspace{-2mm} \frac{\partial^2}{\partial x^i \partial x^j}
 +\frac{1}{2}\mathop{\sum_{i=1}}_{i\text{ odd}}^\infty i x^i \frac{\partial}{\partial x^{i+2m}}+ \frac{1}{16} \delta_{m,0}+\frac{(x^1)^2}{4\hbar}\delta_{m,-1}
$$
where the sum over $i+j=2m$ is empty when $m=0$ or $-1$ and $\frac{\partial}{\partial x^{-1}}$ is the zero operator. Then 
\[L_mZ^{\text{KW}}(\hbar,x^1,3x^3,...)=0,\quad m\geq -1\]
which uniquely determines any intersection number recursively from the initial calculation $\int_{\overline{\cm}_{0,3}} 1=1$ as conjectured by Witten  \cite{WitTwo} and proven by Kontsevich  \cite{KonInt}.  
Define the symplectic vector space of residueless Laurent series
\begin{equation}  \label{wairy}
    W_{\text{Airy}}=\left\{ J=\sum_{n\in\bz} J_nz^{-n}\frac{dz}{z}\mid J_0=0, \exists N \text{ such that }J_n=0,\ n>N\right\}
\end{equation} 
with symplectic form
$$\Omega_W(\eta_1,\eta_2)=\Res_{z=0}f_1\eta_2,\quad df_1=\eta_1,\eta_2\in W.$$
There is a symplectomorphism 
$W_{\text{Airy}}\cong\mathrm{Spf}(\mathbb{C}\llbracket x^{\sbt},y_{\sbt} \rrbracket )$ 
equipped with the Poisson bracket $\{x^i,y_j\}=\delta^i_{j}$ and $\{x^i,x^j\}=0=\{y_i,y_j\}$, $i,j=1,...,\infty$.

\begin{example}   \label{quadlagKW}
Define the quadratic Lagrangian 
$$\cl_{\text{Airy}}\subset W_{\text{Airy}}=\mathrm{Spf}(\mathbb{C}\llbracket x^{\sbt},y_{\sbt} \rrbracket )$$ 
via the ideal generated by the linear and quadratic functions 
\begin{align*}
   H_k(x^{\sbt},y_{\sbt})&=-y_k,\quad k\in\bz^+_{\text{even}}\\
H_k(x^{\sbt},y_{\sbt})& = \hbar L_{\frac{k-3}{2}}\left(x^{\sbt},\hbar\frac{\partial}{\partial x^{\sbt}}\right)|_{\hbar\frac{\partial}{\partial x^i}=y_i}\quad k\in\bz^+_{\text{odd}}\\
&=-\tfrac12 y_k +\tfrac14 \hspace{-2mm}\mathop{\sum_{i+j=k-3}}_{i,j\text{ odd}} \hspace{-2mm} y_i y_j
 +\tfrac12\mathop{\sum_{i=1}}_{i\text{ odd}}^\infty i x^i y_{i+k-3}+ \tfrac{1}{16} \delta_{k,3}+\tfrac{1}{4}\delta_{k,1}(x^1)^2
\end{align*}
where $y_{-1}=0$.
\end{example}

The local behaviour of the topological recursion correlators at each point of $R\subset\Sigma$, gives rise to the tau function $Z^{\text{KW}}(\hbar,t_0,t_1,...)$ corresponding to the quadratic Lagrangian $\cl_{\text{Airy}}$.

There are natural embeddings $V_\Sigma\subset G_\Sigma\subset W_{\text{Airy}}^R$ defined by identifying $W_{\text{Airy}}$ with local residue-free differentials---see \eqref{gsigma} and \eqref{vsigma}---and sending global meromorphic differentials to their local expansions at each point in $R$ with respect to a given local coordinate.  We have $T^*(V_\Sigma^*)\cong W_{\text{Airy}}^R$ as symplectic vector spaces.  The (formal) quadratic Lagrangian submanifold produces an Airy structure on $V_\Sigma$:
\begin{equation}  \label{quadlagINTRO}
    \cl_{\text{KS}}=\cl_{\text{Airy}}^R\subset T^*(V_\Sigma^*)\leadsto(A_\Sigma,B_\Sigma,C_\Sigma).
\end{equation}
The dependence of the Airy structure on $\Sigma\subset X$ is through the polarisation of $W_{\text{Airy}}^R\supset\cl_{\text{KS}}$. The embedding $G_\Sigma\subset W_{\text{Airy}}^R$ is coisotropic and the quotient becomes a symplectic quotient $$H^1(\Sigma;\bc)=G_\Sigma/G_\Sigma^\perp=:W_{\text{Airy}}^R\hspace{-.5mm}\sslash\hspace{-1mm} G_\Sigma^\perp.$$
The image under the quotient map of the tensor $A_\Sigma\in V_\Sigma\otimes V_\Sigma\otimes V_\Sigma\to\overline{V}_\Sigma\otimes\overline{V}_\Sigma\otimes\overline{V}_\Sigma$ is rather natural.

The section $\theta\in\Gamma(\widehat{\cb}_{[\Sigma]},G_\Sigma)$ constructed in Theorem~\ref{thetatheorem} takes its values in $\cl_{\text{KS}}$---see Proposition~\ref{phisec1}.  This is used to understand the relation of the Airy structure built out of $\Sigma\subset(X,\Omega_X,\cf)$ to the local geometry of the space $\cb$, stated concretely in Theorem~\ref{main} below.

The {\em Donagi-Markman cubic} \cite{DMaCub} is the extension class defined by the exact sequence \eqref{ext}, which gives rise to a tensor on $\cb$:
$$\bar{A}_\Sigma\in\mathrm{Ext}^1(K_\Sigma,T \Sigma) \cong H^0(\Sigma,K_\Sigma^{\otimes 3})^\vee\cong \left(T^*_{[\Sigma]}\cb\right)^{\otimes 3}.$$

There is a natural isomorphism $H^0(\Sigma,K_\Sigma)^\vee\cong \overline{V}_\Sigma$ where $\overline{V}_\Sigma$ is the image of $V_\Sigma$ under the quotient map $G_\Sigma\to\ch_\Sigma$:
\[\overline{V}_\Sigma=\{\eta\in H^1(\Sigma;\bc)\mid\oint_{a_i}\eta=0,i=1,...,g\}\]  
which satisfies
$$H^1(\Sigma;\bc)=H^0(\Sigma,K_\Sigma)\oplus\overline{V}_\Sigma.$$
Any complement to $H^0(\Sigma,K_\Sigma)$ is naturally isomorphic to $H^0(\Sigma,K_\Sigma)^\vee$ via the symplectic form on $H^1(\Sigma;\bc)$.
Via this natural isomorphism, the Donagi-Markman cubic is represented by
$$\bar{A}_\Sigma\in\overline{V}_\Sigma\otimes\overline{V}_\Sigma\otimes\overline{V}_\Sigma.
$$
\begin{theorem}  \label{main}
The image of the tensor $A_\Sigma$ under the quotient map $V_\Sigma\to\overline{V}_\Sigma$  is the tensor $\bar{A}_\Sigma$.
\begin{align}  \label{tensquot}
    V_\Sigma\otimes V_\Sigma\otimes V_\Sigma&\to\overline{V}_\Sigma\otimes\overline{V}_\Sigma\otimes\overline{V}_\Sigma\\
    A_\Sigma&\mapsto\bar{A}_\Sigma.\nonumber
\end{align}
\end{theorem}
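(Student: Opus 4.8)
The plan is to reduce the asserted identity of two cubic tensors to a computation of their values on a distinguished basis. Both $\bar A_\Sigma$ and the image of $A_\Sigma$ are symmetric elements of $\overline V_\Sigma^{\otimes 3}$ (recall that \eqref{hamlag} forces $A_\Sigma\in\mathrm{S}^3(V_\Sigma)$), and under the isomorphism $\overline V_\Sigma\cong H^0(\Sigma,K_\Sigma)^\vee$ the symplectic pairing on $H^1(\Sigma;\bc)$ identifies the value of a class $\bar\eta\in\overline V_\Sigma$ on $\omega_k\in H^0(\Sigma,K_\Sigma)$ with $-\oint_{b_k}\bar\eta=2\pi i\oint_{\hat b_k}\bar\eta$, since $\oint_{a_l}\bar\eta=0$ and $\oint_{a_l}\omega_k=\delta_{lk}$. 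Hence a symmetric tensor in $\overline V_\Sigma^{\otimes 3}$ is determined by its values on triples $(\omega_i,\omega_j,\omega_k)$, and I would prove the theorem by showing that these values agree.

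First I would compute the image of $A_\Sigma$. By Proposition~\ref{phisec1} the section $\theta$ takes its values in $\cl_{\text{KS}}$, which I write as the graph $y_i=a_{ijk}x^jx^k+\dots$ over its base $V_\Sigma^*=L$. Since $\theta(0)=0$ and $\partial_{z^i}\theta|_0=\omega_i\in T_0\cl_{\text{KS}}=L$, the base part of $\theta$ is $z^i\omega_i$ and its $V_\Sigma$-valued (fibre) part is the quadratic-and-higher part of \eqref{thetaTR1}; thus $A_\Sigma$ is read off from the quadratic fibre term $-\tfrac12 z^iz^j\oint_{\hat b_i}\oint_{\hat b_j}\omega_{0,3}$, so that two contracted slots of $A_\Sigma$ are computed by the $\hat b$-period operators. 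Projecting each factor $V_\Sigma\to\overline V_\Sigma$ and evaluating on $(\omega_i,\omega_j,\omega_k)$ replaces the remaining slot by the same $\hat b$-period, by the pairing recalled above. Therefore the image of $A_\Sigma$, evaluated on $(\omega_i,\omega_j,\omega_k)$, is a fixed constant times $\oint_{\hat b_i}\oint_{\hat b_j}\oint_{\hat b_k}\omega_{0,3}$.

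Next I would identify $\bar A_\Sigma$ with the variation of the period matrix. The Donagi-Markman cubic is the extension class of \eqref{ext}; composing the identification $T_{[\Sigma]}\cb\cong H^0(\Sigma,K_\Sigma)$ with cup product and Serre duality $H^1(\Sigma,\mathcal O_\Sigma)\cong H^0(\Sigma,K_\Sigma)^\vee\cong\overline V_\Sigma$ realises this class, evaluated on $(\omega_i,\omega_j,\omega_k)$, as the derivative $\partial\tau_{ij}/\partial z^k$ of the period map (the cohomological Rauch variational formula). By Corollary~\ref{om3cor} this derivative equals $-2\pi i\oint_{\hat b_i}\oint_{\hat b_j}\oint_{\hat b_k}\omega_{0,3}$, the same triple $\hat b$-period found in the previous step. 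Matching the overall constants then gives the equality of tensors on the basis, hence as elements of $\overline V_\Sigma^{\otimes 3}$.

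The main obstacle is the third step: establishing, within this residue/cohomology framework and with exact constants, that the abstract extension class equals the derivative of the period map. This requires carefully composing the chain $\mathrm{Ext}^1(K_\Sigma,T\Sigma)\cong H^0(\Sigma,K_\Sigma^{\otimes 3})^\vee\cong\overline V_\Sigma^{\otimes 3}$ with Serre duality and the symplectic pairing, and checking that the normalisation $\oint_{\hat b_k}=-\tfrac1{2\pi i}\oint_{b_k}$ together with the symmetrisations yields equality on the nose rather than up to scale. A secondary point to justify is that reading the global tensor $A_\Sigma\in V_\Sigma^{\otimes 3}$ off the formal slice $\theta(z)$ legitimately determines its image in the finite-dimensional $\overline V_\Sigma^{\otimes 3}$; this holds because the projection retains only the cohomological directions turned on by the coordinates $z^i$, which are exactly the directions $\omega_i\in L$ appearing in the linear part of $\theta$.
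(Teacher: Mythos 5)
Your argument is correct in outline but takes a genuinely different route from the paper's proof. What you describe is essentially the ``alternative, non-geometric proof'' that the paper mentions and declines to write out: reduce both tensors to the triple $\hat b$-period of $\omega_{0,3}$, using Proposition~\ref{phisec1} and the graph presentation \eqref{graph} to read the relevant components of $A_\Sigma$ off the quadratic term of $\theta$ (this is in effect the $j,k\le g$ part of Proposition~\ref{A=w3}), and Corollary~\ref{om3cor} to identify $\bar A_\Sigma$ with $\partial\tau_{ij}/\partial z^k$. The paper's own proof (Proposition~\ref{vfs} in Section~\ref{geomA}) instead realises both tensors as covariant derivatives of canonical tangent vector fields --- $A_\Sigma$ from $\nabla^\cf$ applied to $\xi_i=\widehat{\omega}_i$ on $\cl_{\text{KS}}$, and $\bar A_\Sigma$ from $\nabla^{\text{GM}}$ applied to $[\omega_i]$ on $\ch$ --- and concludes from the compatibility $[\nabla^\cf\eta]=\nabla^{\text{GM}}[\eta]$ of \eqref{connFGM} together with $\xi_i=\widehat{\omega}_i$; this bypasses basis evaluation and constant-matching entirely, and it is also what shows that the formally defined $A_\Sigma$ descends to an analytic tensor on $\cb$. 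Two comments on the loose ends you flag. The ``main obstacle'' you name (abstract extension class versus derivative of the period map) is not an obstacle inside the paper's framework: $\bar A_\Sigma$ is defined in Section~\ref{sec:def} by $c_{ijk}=\partial^3 F_0/\partial z^i\partial z^j\partial z^k$, so that identification is definitional here. The constant-matching you defer, on the other hand, is where the real content of your version sits --- the factor $-\tfrac12$ in \eqref{thetaTR1}, the symmetrisation of $a_{ijk}$ in its first index, the sign $\Omega_W(\omega_i,x^j)=\delta_{ij}$, the normalisation \eqref{bnorm} and the $-2\pi i$ in Corollary~\ref{om3cor} must all cancel --- and until that bookkeeping is carried out your argument establishes the equality only up to a universal scalar.
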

The Donagi-Markman cubic can be calculated via variations $\frac{\partial\tau_{ij}}{\partial z^k}$ and hence \eqref{tensquot} can be deduced from Corollary~\ref{om3cor} together with the result $A_\Sigma=\omega_{0,3}$---see Proposition~\ref{A=w3}.  Instead, we give a direct, geometric proof of Theorem~\ref{main}.  In Section~\ref{geomA}, $A_\Sigma$ is constructed as a linear map \[T_0\cl_{\text{KS}}\otimes T_0\cl_{\text{KS}}\to V_\Sigma\]
via covariant differentiation of a vector field on $\cl_{\text{KS}}$.  
The tensor $\bar{A}_\Sigma$ similarly arises via covariant differentiation of vector fields. Any vector $v\in T_{[\Sigma]}\cb\cong H^0(\Sigma,K_\Sigma)$ extends locally to a unique vector field $\tilde{v}\in\Gamma(U_{[\Sigma]},T\cb)]\subset\Gamma(U_{[\Sigma]},\ch)$ defined by requiring $\oint_{a_i}\tilde{v}$ to be locally constant.  
The covariant derivative $\nabla^{\text{GM}}_u\tilde{v}$ lives inside $\overline{V}_\Sigma$, since the derivative of constant $a$-periods is zero. Hence $\nabla^{\text{GM}}_u\tilde{v}$ takes in two vectors $u,v\in T_{[\Sigma]}\cb$ and defines a linear map \[H^0(\Sigma,K_\Sigma)\otimes H^0(\Sigma,K_\Sigma)\to\overline{V}_\Sigma\] 
which is identified with $\bar{A}_\Sigma$.
Thus, both $A_\Sigma$ and $\bar{A}_\Sigma$ are obtained via covariant differentiation with respect to a flat connection of a tangent vector field by a tangent vector.  Moreover, the vector fields and flat connection upstairs are  related to vector fields and flat connection downstairs.  To implement this idea one needs to use the formal germ of a Lagrangian and formal vector fields upstairs, together with the linearisation of $\theta$ defined in Theorem~\ref{thetatheorem}.  One consequence of Theorem~\ref{main} is that although $A_\Sigma$ is constructed only in a formal neighbourhood of a point in $\cb$, it descends to an analytic tensor which extends over all of $\cb$.

It is interesting that the methods used here, following Kontsevich and Soibelman, embed $\cb$ into a vector space of meromorphic differentials on the curve, with poles located at the branch points on the spectral curve, while the methods used by Bertola and Korotkin to prove Corollary~\ref{om3cor} embed $\cb$ into a moduli space of meromorphic differentials with poles located at the poles of the Higgs field.  They write in \cite{BKoSpa}: ``This suggests a possibility of the existence of a natural simple structure on spaces of abelian differentials which underlie the topological recursion framework on spaces of spectral covers".   Indeed the methods of Kontsevich and Soibelman produce topological recursion from a natural structure on the space of meromorphic differentials on a curve.   Although the meromorphic differentials differ in both cases, it would be interesting to compare these two approaches.

In Section~\ref{sec:TR} we define topological recursion for any smooth curve embedded in a foliated symplectic surface $\Sigma\subset(X,\Omega_X,\cf)$, and give examples of foliated symplectic surfaces.  Topological recursion is related to cohomological field theories \cite{DOSSIde} and we describe its consequences for the deformation space $\cb$ of $\Sigma\subset X$ in Section~\ref{cohft}.  In Section~\ref{quadlag} we define the approach to topological recursion by Kontsevich and Soibelman \cite{KSoAir}.  The quadratic Lagrangian used is $\cl_{\text{Airy}}$ constructed from the Kontsevich-Witten tau function.  In Section~\ref{BGW} we instead use a quadratic Lagrangian $\cl_{\text{Bessel}}$ built from the Br\'{e}zin-Gross-Witten tau function of the KdV hierarchy.  In Section~\ref{formalconv} we define the series $\theta\in\Gamma(\widehat{\cb}_{[\Sigma]},G_\Sigma)$ defined in any formal neighbourhood of $[\Sigma]\in\cb$ and prove its properties.  Appendix~\ref{variation} contains a proof of the variation formula for the correlators $\omega_{h,n}$ due to Eynard and Orantin \cite{eynard_orantin} and adapted to the spectral curves arising out of $\Sigma\subset(X,\Omega_X,\cf)$.

{\em Acknowledgements.}   The authors would like to thank David Baraglia, Todor Milanov, Jan Soibelman and Kari Vilonen for useful conversations.  WC and PN would like to thank the Max Planck Insitute for Mathematics, Bonn, where part of this work was carried out and PN would like to thank LMU, Munich where part of this work was carried out.  This work was partially supported under the  Australian
Research Council {\sl Discovery Projects} funding scheme project number DP180103891. 

    \section{Topological recursion applied to curves in surfaces}  \label{sec:TR}
In this section we apply topological recursion as defined in \cite{eynard_orantin} to $$\Sigma \subset(X,\Omega_X,\cf)$$ 
given by a compact curve embedded inside a (holomorphic) symplectic surface $(X,\Omega_X)$ with Lagrangian foliation $\cf$ following Kontsevich and Soibelman \cite{KSoAir}.  We begin with a description of the prepotential on the deformation space of $\Sigma$ inside $X$.  We then equip the surface $X$ with a Lagrangian foliation, $\cf$, which puts the extra structure on $\Sigma$ required to define a spectral curve which is the initial data of topological recursion.   More generally one should be able to relax the symplectic condition, and require only a Poisson structure, \cite{KSoAir}.  In Sections~\ref{higgs} and \ref{K3} we describe the cases $X=T^*C$ and $X=$ an elliptic K3 surface.

\subsection{Deformation space of embedded curves}   \label{sec:def}
Consider a symplectic surface $(X,\Omega_X)$ together with a smooth, embedded genus $g$ curve $\Sigma\subset X$.  The deformation space $\cb$ of $\Sigma$ inside $X$ is a smooth complex analytic moduli space of dimension $g$.
The tangent space of the moduli space $\cb$ at the point $\Sigma$ is naturally identified with $H^0(\Sigma,\nu_\Sigma)$, the space of holomorphic sections of the normal bundle of $\Sigma$, which is isomorphic to $H^0(\Sigma, K_\Sigma)$, the space of holomorphic differentials on $\Sigma$ via adjunction
$$ K_\Sigma\cong K_X|_\Sigma\otimes\nu_\Sigma\cong \nu_\Sigma.
$$
This is a particular case of the more general property for a Lagrangian subspace $L$ of a symplectic vector space $W$
$$ 0\to L\to W\stackrel{\Omega_W(\cdot,\cdot)}{\to} L^*\to 0
$$
which produces a canonical isomorphism $L^*\cong W/L$. 

Over the moduli space $\cb$ is a symplectic vector bundle $\ch$ equipped with a flat connection $\nabla^{\text{GM}}$. The symplectic vector bundle $\ch$ is given by the hypercohomology
\begin{equation}  \label{cohvb}
    \ch_\Sigma=\bh^0(\Sigma, \mathrm{Cone}(d_{dR}: \co_\Sigma \rightarrow \Omega^1_\Sigma ))
\end{equation}
which is isomorphic to the first cohomology group $H^1(\Sigma;\bc)$, and $\nabla^{\text{GM}}$ is the Gauss-Manin connection.
Define the $\ch$-valued 1-form $\phi \in \Gamma(\cb,\Omega^1_\cb\otimes\ch)$ via the composition of maps
$$T_\Sigma \cb \stackrel{\cong}{\longrightarrow} H^0(\Sigma,\Omega^1_\Sigma) \rightarrow H^1(\Sigma;\bc) =\ch_\Sigma.$$
Then $\phi$ is flat with respect to $\nabla^{\mathrm{GM}}$.
\begin{lemma}
$\nabla^{\mathrm{GM}}\phi=0$.
\end{lemma}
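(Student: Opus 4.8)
The plan is to deduce flatness from the \emph{exactness} of $\phi$ with respect to $\nabla^{\mathrm{GM}}$. Concretely, I would produce a section $\Theta\in\Gamma(U_{[\Sigma]},\ch)$ on a neighbourhood of $[\Sigma]$ with
\[
\phi=\nabla^{\mathrm{GM}}\Theta,
\]
and then conclude immediately: since the Gauss--Manin connection is flat, its curvature vanishes, so the exterior covariant derivative satisfies $\nabla^{\mathrm{GM}}\circ\nabla^{\mathrm{GM}}=0$ and
\[
\nabla^{\mathrm{GM}}\phi=\nabla^{\mathrm{GM}}\nabla^{\mathrm{GM}}\Theta=0.
\]
The section $\Theta$ is exactly the cohomological $[\theta]$ of \eqref{cohtheta}; the point is that it is written down \emph{directly}, so that the lemma becomes a formal consequence of flatness rather than an input to the construction of $[\theta]$ (this avoids the apparent circularity in characterising $[\theta]$ by $\phi=\nabla^{\mathrm{GM}}[\theta]$).

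To build $\Theta$ I would use the hypercohomology model \eqref{cohvb}. Since $\Omega_X$ is a holomorphic symplectic form, by Darboux it admits local holomorphic primitives $\lambda_\alpha$ with $d\lambda_\alpha=\Omega_X$ on a cover $\{U_\alpha\}$ of a neighbourhood of $\Sigma$. Because $\Omega_X$ is a $(2,0)$-form and $\Sigma$ is a complex curve, $\Omega_X|_{T\Sigma}=0$, so each restriction $\lambda_\alpha|_\Sigma$ is a closed holomorphic $1$-form on $U_\alpha\cap\Sigma$. On overlaps $\lambda_\alpha-\lambda_\beta$ is closed, hence equal to $df_{\alpha\beta}$; the pair $(\{\lambda_\alpha|_\Sigma\},\{f_{\alpha\beta}|_\Sigma\})$ is then a cocycle for $\mathrm{Cone}(d_{dR}\colon\co_\Sigma\to\Omega^1_\Sigma)$ and defines a class $\Theta\in\bh^0(\Sigma,\mathrm{Cone})=\ch_\Sigma$. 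The same local data on nearby curves $\Sigma'$ assembles into a section $\Theta\in\Gamma(U_{[\Sigma]},\ch)$.

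The crux is the identity $\phi=\nabla^{\mathrm{GM}}\Theta$. A tangent vector $v\in T_{[\Sigma]}\cb$ is a normal field $\xi\in H^0(\Sigma,\nu_\Sigma)$, and the Gauss--Manin derivative of $\Theta$ is computed from the variation of the representative $\lambda_\alpha$ along $\xi$, which by Cartan's formula is
\[
\iota_\xi d\lambda_\alpha+d(\iota_\xi\lambda_\alpha)=\iota_\xi\Omega_X+d(\iota_\xi\lambda_\alpha).
\]
By the very definition of \eqref{ext}, the surjection $TX|_\Sigma\to K_\Sigma$ is $w\mapsto(\iota_w\Omega_X)|_{T\Sigma}$ and realises $\nu_\Sigma\cong K_\Sigma$, so $(\iota_\xi\Omega_X)|_{T\Sigma}=\phi(v)$; meanwhile the exact term $d(\iota_\xi\lambda_\alpha)$, together with the induced variation of the gluing functions $f_{\alpha\beta}$, is a coboundary in the Cone complex and so vanishes in $\ch_\Sigma$. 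This yields $\nabla^{\mathrm{GM}}_v\Theta=\phi(v)$, hence $\phi=\nabla^{\mathrm{GM}}\Theta$. I expect the main obstacle to be exactly this step: tracking the Gauss--Manin derivative of a hypercohomology class as $\Sigma$ deforms and verifying that the exact and \v{C}ech contributions cancel, leaving only the normal-bundle term $\iota_\xi\Omega_X$. As an independent check, working in a flat period frame one sees that $\nabla^{\mathrm{GM}}\phi=0$ is equivalent to closedness of the $1$-forms $\oint_{a_i}\phi$ and $\oint_{b_i}\phi$ on $\cb$; the nontrivial part reduces to total symmetry of $\partial\tau_{ij}/\partial z^k$, i.e.\ of the Donagi--Markman cubic, consistent with Corollary~\ref{om3cor}.
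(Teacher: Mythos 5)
Your proof is correct, but it runs in the opposite direction to the paper's. The paper also reduces the lemma to local exactness of $\phi$, but verifies it by a purely topological/integral argument: pairing $\int_\gamma\phi$ with a homology class $[\alpha]$ of $\Sigma$ turns the loop integral into $\int_{T^2}\Omega_X$ over the torus swept out by the cycle, which vanishes by Stokes because the torus bounds a solid torus and $d\Omega_X=0$. You instead \emph{exhibit} the primitive: local holomorphic primitives $\lambda_\alpha$ of $\Omega_X$ restricted to $\Sigma$ assemble into a hypercohomology class $\Theta$, the Cartan formula gives $\nabla^{\mathrm{GM}}_v\Theta=[\iota_\xi\Omega_X|_{T\Sigma}]=\phi(v)$ (the exact and \v{C}ech contributions being a coboundary in the cone complex), and $(\nabla^{\mathrm{GM}})^2=0$ finishes it. Both arguments ultimately rest on the same two facts --- $\Omega_X$ is closed (hence locally exact) and $T\Sigma$ is isotropic --- but the paper's version is shorter and avoids all cocycle bookkeeping, while yours buys something the paper only gets later: a direct construction of the primitive $[\theta]$ of \eqref{cohtheta} as an explicit class (generalising $[v\,du|_{\Sigma'}]$ in the $T^*C$ case), so that \eqref{theta} no longer logically presupposes the lemma. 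Two small points you should make explicit if you write this up: the cone cocycle condition requires $f_{\alpha\beta}+f_{\beta\gamma}+f_{\gamma\alpha}$ to be locally constant (true since its differential vanishes), and the Lie-derivative computation must be checked to be independent of the chosen extension of the normal field $\xi$ --- it is, precisely because $\iota_{\xi'}\Omega_X|_{T\Sigma}=0$ for $\xi'$ tangent to $\Sigma$, i.e.\ because $T\Sigma$ is Lagrangian.
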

\begin{proof}
We will show that $\phi$ is locally exact, i.e. there exists a well-defined local primitive, or equivalently that $\phi$ integrates trivially along small loops in $\cb$.  Given $[\Sigma]\in\cb$, choose a small loop $\gamma\subset\cb$ containing $[\Sigma]$.  Choose $[\alpha]\in H_1(\Sigma,\bz)$ represented by an embedded closed curve $\alpha\subset\Sigma$ and choose a family $\tilde{\alpha}$ of embedded closed curves representing the given homology cycle in each fibre.  This gives a torus $T^2\to X$ which bounds a solid torus $M^3\to X$ when $\gamma$ is chosen small enough.  Integration of $\phi$ along $\gamma$ gives an element of a fibre of $\ch$ which evaluates on $[\alpha]$ by
\[\left\langle\int_\gamma\phi,[\alpha]\right\rangle=\int_{T^2}\Omega_X=0\]
since $d\Omega_X=0$ and $T^2$ is homologically trivial.  This applies to any primitive homology class $[\alpha]$ hence
$$\int_\gamma\phi=0.$$
This is true of any small $\gamma$, so $\phi$ is locally exact hence closed (as a section of the locally trivial bundle $\ch$) i.e. flat with respect to $\nabla^{\mathrm{GM}}$.
\end{proof}

The flat connection $\nabla^{\mathrm{GM}}$ naturally defines a complex
$$\Omega^0_\cb\otimes\ch \stackrel{\nabla^{\mathrm{GM}}}{\to} \Omega^1_\cb\otimes\ch \stackrel{\nabla^{\mathrm{GM}}}{\to} \Omega^2_\cb\otimes\ch\stackrel{\nabla^{\mathrm{GM}}}{\to} ...$$
Define a local section $s\in\Gamma(U_{[\Sigma]},\ch)$, for $U_{[\Sigma]}\subset\cb$ a neighbourhood of a point $[\Sigma]\in\cb$, by $\nabla^{\text GM}s=-\phi$.  The solution to this equation is a cohomology class $s([\Sigma'])\in \ch_\Sigma'$ for each $[\Sigma']\in U_{[\Sigma]}$ well-defined up to addition of a constant independent of $[\Sigma']$. To remove the constant define $[\theta]:U_{[\Sigma]}\to \ch_\Sigma$ by
\begin{equation}  \label{cohtheta}
    [\theta]([\Sigma']):=s([\Sigma])-s([\Sigma'])\in\bc^{2g}\cong \ch_\Sigma
\end{equation}
which is a well-defined map from the open set $U_{[\Sigma]}\subset\cb$ to $\bc^{2g}$.
The isomorphism of cohomology with $\bc^{2g}$ uses a choice of Torelli basis. Strictly, in \eqref{cohtheta}, $s([\Sigma'])$ has been parallel transported from $\ch_{\Sigma'}$ to $\ch_\Sigma$ via the Gauss-Manin connection.  By definition, the covariant derivative of $[\theta]$ is given by
$$\nabla^{\text GM}_\eta[\theta]=\eta
$$
for any $\eta\in H^0(\Sigma, K_{\Sigma})\cong T_{[\Sigma]}\cb$.  The linearisation $\nabla^{\text GM}[\theta]:T_{[\Sigma']}\cb\to \ch_{\Sigma'}$ has image given by the parallel transport of the Lagrangian subspace $H^0(\Sigma, K_{\Sigma'})\subset \ch_{\Sigma'}$ so $[\theta]$ defines a local Lagrangian embedding of $U_{[\Sigma]}]\subset\cb$ into $\ch_\Sigma$:
\begin{equation}  \label{Bemb}
    U_{[\Sigma]}\stackrel{\text{Lag.}}{\longhookrightarrow}\ch_\Sigma\cong\bc^{2g}.
\end{equation}

\begin{remark}
It is important to note that $[\theta]$ is related to, but not equal to, the cohomology class of the tautological 1-form $vdu|_{\Sigma'}$ in the case $X=T^*C$.  It is given by
$$[\theta]([\Sigma'])=\left[vdu|_{\Sigma}\right]-\left[vdu|_{\Sigma'}\right]\in\bc^{2g}.$$
As mentioned above, this difference uses parallel transport by the Gauss-Manin connection.  
We will see later that there exists a  meromorphic differential $\theta$ which is defined only in a formal neighbourhood of $[\Sigma]\in\cb$ with cohomology class given by an analytic expansion of $[\theta]$.  The Gauss-Manin connection lifts to a connection with well-defined parallel transport  on any formal neighbourhood, but only partially defined on $\bg$. 
\end{remark}

Using the choice of $a$-cycles on each $\Sigma'$, $[\theta]$ defines coordinates on $U_{[\Sigma]}\subset\cb$ by
\begin{equation}  \label{zcoords}
    z^i([\Sigma'])=\oint_{a_i}[\theta]([\Sigma']),\quad i=1,...,g.
\end{equation}
The coordinates satisfy $z^i([\Sigma])=0$, and coordinates defined with respect to any nearby point are related via a constant shift $z^i\mapsto z^i+z^i_0$.


The $b$-cycles on each $\Sigma'$ give rise to functions $\w_i(z^1,...,z^g)$ defined by $\w_i=\oint_{b_i}[\theta]([\Sigma'])$ for $i=1,...,g$.  Their derivatives satisfy
$$\frac{\partial \w_i}{\partial z^j}=\frac{\partial}{\partial z^j}\oint_{b_i}[\theta]=\oint_{b_i}\nabla^{\mathrm{GM}}_{\hspace{-1mm}\frac{\partial}{\partial z^j}}[\theta]=\oint_{b_i}\omega_j=\tau_{ij}
$$
where the second equality uses the definition of the Gauss-Manin connection.  By the Riemann bilinear relations $\tau_{ij}$ is symmetric, hence there exists a function, known as the {\em prepotential}, 
\[F_0:U\to\bc\] 
satisfying
$$\w_i=\frac{\partial F_0}{\partial z^i},\quad i=1,...,g
$$
hence also
$\frac{\partial^2 F_0}{\partial z^i\partial z^j}=\tau_{ij}$ and $\frac{\partial^3 F_0}{\partial z^i\partial z^j\partial z^k}=c_{ijk},$
which defines the tensor $\bar{A}_\Sigma\in\overline{V}_\Sigma\otimes\overline{V}_\Sigma\otimes\overline{V}_\Sigma$.

\subsection{Foliations and topological recursion}   \label{foliation}

Equip the symplectic surface $(X,\Omega_X)$ with a holomorphic Lagrangian foliation $\cF$. Let $\Sigma\subset(X,\Omega_X,\cf)$ be a compact curve embedded inside a symplectic surface $(X,\Omega_X)$ with Lagrangian foliation $\cf$.  We require that $\Sigma$ is tangent to $\cf$ at finitely many points $R \subset\Sigma$ and the tangencies are simple.  
\begin{example}
A typical example is the cotangent space $X=T^*C$ of a compact curve $C$. The symplectic surface $X$ is foliated by fibres of the projection $\pi: X \rightarrow C$.  For an embedded compact curve $\Sigma\subset X$ the set $R$ is the set of ramification points of the morphism $\pi|_{\Sigma}$, and $\Sigma$ is chosen to have simple ramification points.
\end{example}

\begin{definition}  \label{FDcoord}
Define FD (foliation-Darboux) local coordinates $u,v$ on $(X,\Omega_X,\cf)$ to be Darboux coordinates, i.e. $du\wedge dv=\Omega_X$ that define the leaves of the foliation via $u=$ constant.
\end{definition}
For each point in $X$, there exists a neighbourhood with FD local coordinates.  They are unique up to the symplectic change of coordinates which preserves the foliation
\begin{equation}  \label{xyamb}
    (u,v)\mapsto(f(u),\frac{v}{f'(u)}+g(u)),
\end{equation}
for $f'(u)\neq 0$ in the neighbourhood of $X$.

The data $\Sigma\subset(X,\Omega_X,\cf)$ gives rise to a spectral curve which is used to define topological recursion.  We begin with a definition of topological recursion following Eynard and Orantin \cite{eynard_orantin}.  Topological recursion arose out of the study of the free energy of matrix models \cite{CEyHer}.

\begin{itemize}
\item {\bf Spectral curve.} A spectral curve $(\Sigma,u,v,B)$ consists of a compact Riemann surface $\Sigma$ equipped with two meromorphic functions $u$ and $v$ defined on $\Sigma$ and a symmetric bidifferential $B$ defined on $\Sigma\times\Sigma$.  We assume that each zero of $du$ is simple and does not coincide with a zero of $dv$. Topological recursion produces symmetric tensor products of meromorphic differentials $\omega_{h,n}$ on $\Sigma^n$ for $h \geq 0$ and $n \geq 1$ which we call correlators. 
\item {\bf Bergman kernel.}
A good choice of bidifferential $B$ in the spectral curve is the {\em Bergman kernel} which is a canonical normalised symmetric bidifferential $B(p,p')$ associated to a compact Riemann surface equipped with a choice of $a$-cycles $\{a_i\}_{i=1,...,g}\subset\Sigma$.  It is normalised by $\int_{p\in a_i}B(p,p')=0$, $i=1,...,g$.  In a local coordinate $z$ on $\Sigma$ it is given by
\begin{equation}  \label{bidiff}
B(p,p')=\frac{dz(p)dz(p')}{(z(p)-z(p'))^2}+\text{holomorphic in }(z(p),z(p')).
\end{equation}
It generalises the Cauchy kernel since it satisfies $\displaystyle df(p)=\Res_{p'=p}f(p')B(p,p')$, for all meromorphic $f$.
\item {\bf Recursion kernel.} Define a kernel in a neighbourhood of any $\alpha\in\Sigma$, i.e. $du(\alpha)=0$, by
\begin{equation}  \label{kernel}
K(p_1,p) = -\frac12\frac{\int_{\hat{p}}^p B(p_1, \,\cdot\,)}{(v(p)-v(\hat{p})) \, du(p)}.
\end{equation}
where $p \mapsto \hat{p}$ denotes the holomorphic involution defined locally at the ramification point $\alpha\in R$ satisfying $u(\hat{p}) = u(p)$ and $\hat{p} \neq p$.
\item {\bf Recursion.} The correlators $\omega_{h,n}$ are defined by 
\[\omega_{0,2}(p_1,p_2)=B(p_1,p_2)
\]
and for $2h-2+n>0$ recursively via:
\begin{align}   \label{rec}
\omega_{h,n}(p_1, p_S) = \sum_{du(\alpha) = 0} \mathop{\mathrm{Res}}_{p=\alpha} \, & K(p_1, p) \Bigg[ \omega_{h-1,n+1}(p, \sigma_\alpha(p), p_S) \\
&+\hspace{-2mm} \mathop{\sum_{h_1+h_2=h}}_{I \sqcup J = S} \omega_{h_1,|I|+1}(p, p_I) \, \omega_{h_2,|J|+1}(\sigma_\alpha(p), p_J) \Bigg].
\nonumber
\end{align}
Here, we use the notation $S = \{2, 3, \ldots, n\}$ and $p_I = \{p_{i_1}, p_{i_2}, \ldots, p_{i_k}\}$ for $I = \{i_1, i_2, \ldots, i_k\}$. The outer summation is over the zeroes of $du$. 
\item {\bf Structure of correlators.} The correlators $\omega_{h,n}(p_1,..., p_n)$ are tensor products of meromorphic differentials, symmetric in $p_i$, with zero residue poles at $p_i=\alpha$ for any zero $\alpha$ of $du$, and holomorphic outside the set defined by $du=0$. They inherit from $B(p,p')$ the property $\oint_{p_i\in a_k}\omega_{h,n}(p_1,\cdots,p_n) = 0$.
\item {\bf Dilaton equation.}   
The differential $vdu$ is locally exact on $\Sigma$, and we define $\psi$ to be a local primitive, i.e. $d\psi=vdu$.  The {\em dilaton equation}, proven in \cite{eynard_orantin} is:
\begin{equation}  \label{dilaton}
\sum_{du(\alpha) = 0}\mathop{\mathrm{Res}}_{p_{n+1}=\alpha}\psi(p_{n+1})\omega_{h,n+1}(p_1,...,p_{n+1})=(2h-2+n)\omega_{h,n}(p_1,...,p_n).
\end{equation}
Since $\omega_{h,n+1}$ has zero residue at each $\alpha$, the left hand side of the dilaton equation is independent of the choice of primitive $\psi$.  The dilaton equation leads to the definition of the correlators for $n=0$ and $h\geq 2$.
\begin{equation}  \label{hatf}
F_h:=\frac{1}{2h-2}\sum_{du(\alpha) = 0}\mathop{\text{Res}}_{p=\alpha}\psi(p)\omega_{h,1}(p),\quad h\geq 2.
\end{equation}
These are called {\em symplectic invariants} in \cite{eynard_orantin} (which uses $F_h$ that differs by a negative sign from \eqref{hatf}).
\item {\bf Local spectral curve.} The recursion depends only a neighbourhood of the zeros of $du$, hence $u$, $v$ and $B$ need only be defined locally in this neighbourhood.  In this case $(\Sigma,u,v,B)$ is said to be a {\em local} spectral curve.
\end{itemize} 
In this paper $\omega_{0,1}$ are not defined, or equivalently zero.  In some conventions $\omega_{0,1}$ is defined to coincide with $vdu$.

The recursive procedure of topological recursion \eqref{rec} can be formulated and generalised \cite{ABCOABCD,KSoAir} in terms of the tensors $A$, $B$ and $C$ from the Airy structure defined in Section~\ref{airystr}. 

\subsubsection{Correlators of \texorpdfstring{\boldmath$\Sigma\subset(X,\Omega_X,\cf)$}{Sigma subset (X,Omega,F)}.}   \label{corspec}

Given a compact curve embedded inside a symplectic surface with Lagrangian foliation $\Sigma\subset(X,\Omega_X,\cf)$, choose a collection of $a$-cycles on $\Sigma\subset X$. This choice defines an associated Bergman kernel $B$ normalised over the $a$-cycles, and together with a choice of FD local coordinates $u$ and $v$ (see Definition~\ref{FDcoord}), they define a local spectral curve:
$$\Sigma\subset(X,\Omega_X,\cf)\leadsto(\Sigma,u,v,B).
$$
Apply topological recursion, defined via \eqref{kernel} and \eqref{rec}, to this local spectral curve
to produces correlators $\omega_{h,n}$ which are tensor products of meromorphic differentials on $\Sigma^n$ with poles precisely at $R\subset\Sigma$.
The spectral curve $(\Sigma,u,v,B)$ depends on a choice of $(u,v)$. The correlators are independent of the ambiguity \eqref{xyamb} since $(u,v)$ enters the recursion via the kernel $K(p_1,p)$ as 
\[(v(p)-v(\hat{p}))du(p)=\left(\frac{v(p)}{f'(u(p))}+g(u(p))-\frac{v(\hat{p})}{f'(u(\hat{p}))}-g(u(\hat{p}))\right)  df(p)\]  
and the involution $p\mapsto\hat{p}$ depends only on the foliation.  The dilaton equation is also independent of the ambiguity \eqref{xyamb} since $\psi\mapsto \psi+\xi(u)$ which adjusts the left hand side of \eqref{dilaton} by a sum of residues of a holomorphic function in $u$ times the correlator.  These residues vanish, i.e. for holomorphic $\xi(u)$ defined in a neighbourhood of $\alpha$, $\displaystyle\mathop{\mathrm{Res}}_{p=\alpha}\xi(u(p))\omega_{h,n}|_{p_{n}=p}=0$ since the principal part of $\omega_{h,n}$ at $\alpha$ is skew-invariant under the involution $p\mapsto\hat{p}$ and it is still skew-invariant after multiplication by an invariant function.  The residue comes from the invariant part.  In particular the functions $F_h$ are well-defined for $h\geq 2$ since they do not change under \eqref{xyamb}.  In the case $X=T^*X$, this generalisation of a spectral curve was studied in \cite{DMuQua}.

Since the simple tangency condition on a curve $\Sigma\subset (X,\Omega_X,\cf)$ is an open condition and a choice of $a$-cycles on $\Sigma$ is a discrete choice, we can choose an open neighbourhood $U_{[\Sigma]}$ of $[\Sigma]\in\cb$ consisting of nearby embedded $\Sigma'\subset (X,\Omega_X,\cf)$ satisfying the simple tangency condition and with a given choice of $a$-cycles.  Thus the correlators $\omega_{h,n}$ are well-defined on each nearby $\Sigma'$ and $F_h$ defines a function on a neighbourhood $U_{[\Sigma]}$ of $[\Sigma]\in\cb$ for each $h\geq2$.  

One main motivation of \cite{KSoAir}, is to use the functions $F_h$ to produce a cyclic vector for the deformation quantisation of $\cb\subset H^1(\Sigma;\bc)$ by:
    \[\exp\left(\frac{F_0}{\hbar}+F_1+\hbar F_2+...+\hbar^{g-1}F_g+...\right)
    \]
This is annihilated up to $O(\hbar)$ by a quantisation of the local defining equations for $\cb\subset H^1(\Sigma;\bc)$:
    \[ -\hbar\frac{\partial}{\partial z^i}+\w_i(z^1,...,z^g).\] 
Just as $F_0$ can be calculated independently of the choice of foliation, the deformation quantisation suggests that there may be a way one could define the $F_h$ independently of the choice of foliation. This might allow $F_h$ to be constructed via topological recursion using any local Darboux coordinates $(u,v)$ of $X$ leading to symplectic invariance of $F_h$.   
    


\subsection{Cohomological field theories}  \label{cohft}
A {\em cohomological field theory} (CohFT) is a pair $(H,\langle.,.\rangle)$ consisting of a finite-dimensional complex vector space $H\cong\bc^R$ equipped with a non-degenerate symmetric bilinear pairing $\langle.,.\rangle$ and a sequence of $S_n$-equivariant maps 
\[ \Omega_{h,n}:H^{\otimes n}\to H^*(\overline{\modm}_{h,n};\bc).\]
The maps $\Omega_{h,n}$ satisfy natural compatibility conditions with respect to restriction to lower dimensional strata in $\overline{\modm}_{h,n}$ built out of $\overline{\modm}_{h',n'}$---see \cite{KMaGro}.  It is semisimple if $H$ is semisimple with respect to a product on $H$ induced from $\Omega_{0,3}$ and $\langle.,.\rangle$.

A relationship between semisimple cohomological field theories and topological recursion was proven in \cite{DOSSIde}.  The correlators $\omega_{h,n}$ of $(\Sigma,u,v,B)$ are polynomial in a basis of differentials $\{\xi^\alpha_k\mid\alpha\in R\subset\Sigma,\ k\in\bn\}$  constructed out of the locally defined function $u$ on $\Sigma$ and the Bergman kernel $B$---see \cite{EynInv,EynTop}.  Define the topological recursion partition function of the spectral curve $S=(\Sigma,B,u,v)$ by
$$Z^S(\hbar,\{\xi^\alpha_k\})=\exp\left(\sum_{h,n}\frac{\hbar^{h-1}}{n!}\omega_{h,n}(\{\xi^\alpha_k\})\right).
$$
It was proven in \cite{DOSSIde} that, under assumptions on the spectral curve, $Z^S$ coincides with the partition function of a semisimple CohFT which stores intersection numbers of all $\Omega_{h,n}$ with the tautological psi classes.    
Furthermore, this decomposition coincides with a decomposition of Givental \cite{GivGro} for partition functions arising out of semisimple cohomological field theories. The assumptions on the spectral curve in \cite{DOSSIde} were lifted in \cite{CNoTop} to allow any compact curve $\Sigma\subset X$.   

Given a curve $\Sigma$ inside a foliated symplectic surface $X$ and $\alpha\in R\subset\Sigma$, choose local coordinates $(u_\alpha,v_\alpha)$ for $X$ in a neighbourhood of $\alpha$ as follows.
   \begin{definition}  \label{specoord}
   Given $(X,\Omega_X,\cf)$ and $\alpha\in R\subset\Sigma\subset X$, define local coordinates $(u_\alpha,v_\alpha)$ in a neighbourhood \(U_\alpha \subset X\) of $\alpha$ satisfying:
   \begin{itemize}
       \item  $du_\alpha\wedge dv_\alpha=\Omega_X$; 
       \item $\{u_\alpha=$ constant $\}$ defines the leaves of the foliation $\cf$;
       \item $(u_\alpha,v_\alpha)|_\alpha=(0,0)$;
       \item $u_\alpha-v_\alpha^2=0$ locally defines $\Sigma$.
   \end{itemize}
   \end{definition}
   The first two properties define FD coordinates---see Definition~\ref{FDcoord}. Via the change of coordinates given in \eqref{xyamb} arbitrary FD coordinates can be transformed to satisfy the remaining two properties. The four properties uniquely determine the coordinates up to
   \[ (u_\alpha,v_\alpha)\mapsto (\zeta^2u_\alpha,\zeta v_\alpha),\quad \zeta^3=1. \]   
The locally defined function $u$ restricts to each $\Sigma'$ for $[\Sigma']\in U_\Sigma$ and we denote its critical value by $u(\alpha')=\lambda_\alpha([\Sigma'])$. The set of critical values $\{\lambda_\alpha(z^1,...,z^g)\mid\alpha\in R\}$ defines a map
\begin{equation}  \label{coordtocritval}
    \Lambda:U_\Sigma\to\bc^R.
\end{equation}
The linearisation, described explicitly in \eqref{Lambdalin} in Appendix~\ref{variation},
\[D\Lambda:H^0(\Sigma,K_\Sigma)\to\bc^R
\]
composed with the CohFT induces linear symmetric maps:
\[H^0(\Sigma,K_\Sigma)^{\otimes n}\to H^*(\overline{\modm}_{h,n};\bc).
\]
This is no longer a CohFT because the pairing on $H^0(\Sigma,K_\Sigma)$ given by
\[\langle\eta_1,\eta_2\rangle:=\sum_{\alpha\in R}\Res_\alpha\frac{\eta_1(p)\eta_2(p)}{du_\alpha(p)}
\]
for $\eta_1, \eta_2\in H^0(\Sigma ,K_\Sigma )$
is not necessarily non-degenerate.  For example, when $X=T^*C$, the tautological 1-form $vdu|_\Sigma\in H^0(\Sigma,K_\Sigma)$ pairs trivially with any $\eta\in H^0(\Sigma ,K_\Sigma )$.

The classes $\Omega_{h,n}(\eta_1,\otimes...\otimes\eta_n)\in H^*(\overline{\modm}_{h,n};\bc)$ of a CohFT consist of terms in all degrees.  Among these, the term of degree $3h-3+n$ is known as the {\em primary} class and measured by $\int_{\overline{\modm}_{h,n}}\Omega_{h,n}(\eta_1,\otimes...\otimes\eta_n)$.  The correlator $\omega_{h,n}\in V_\Sigma^{\otimes n}$, which stores intersection numbers of the tautological psi classes with the image of $\Omega_{h,n}$, also defines a linear map \[\omega_{h,n}:H^0(\Sigma,K_\Sigma)^{\otimes n}\to\bc\] 
via the natural pairing of $V_\Sigma$ and $H^0(\Sigma,K_\Sigma)$.  It would be interesting to understand how to relate this to a primary part. 
Note that primary part of $\Omega_{h,n}$ should not be confused with the topological part, underlying any CohFT, given by the projection of $\Omega_{h,n}$ to $H^0(\overline{\modm}_{h,n}, \mathbb{C})\cong\bc$.  The projection to $H^0(\overline{\modm}_{h,n},\mathbb{C})$ defines a two-dimensional topological field theory on $(H,\langle.,.\rangle)$ which is a sequence of $S_n$-equivariant maps 
\[ \Omega^0_{h,n}:H^{\otimes n}\to\bc\]
satisfying compatibility conditions that are equivalent to composition of multilinear maps.

A CohFT on $H\cong\bc^R$ is equivalent to a geometric structure on $H$ given by a flat metric, a product on the tangent space and further structure, known as a Frobenius manifold, \cite{DubGeo}.  The manifold $\bc^R$ parametrises a family of more general deformations of $\Sigma\subset X$ than those that embed into $X$.  The family of curves gives rise to Dubrovin's superpotential associated to a 
semisimple Frobenius manifold, which is related directly to topological recursion in \cite{DNOPSSup}.  Hence the $g$-dimensional space $\cb$ of deformations of the spectral curve inside $X$ maps to an $|R|$-dimensional Frobenius manifold.

\subsection{Deformation space associated with Higgs bundles}   \label{higgs}

    A particularly interesting class of examples of a deformation spaces of a curve inside a foliated symplectic surface arises from the geometry of Higgs bundles defined by Hitchin in \cite{HitSel}.  
\begin{definition}  \label{defHiggs}
A Higgs bundle over a compact Riemann surface $C$ is a pair $(E,\phi)$ where $E$ is a rank $N$ holomorphic vector bundle over $C$ and $\phi\in H^0(C,\text{End}(E)\otimes K_C)$.
\end{definition}  
Associated to the pair $(E,\phi)$ is its spectral curve
$$\Sigma=\{\det(\phi-\lambda I)=0\}\subset T^*C,
$$
which has equation $0=(-1)^N\det{(\phi-\lambda I)}=\lambda^N+a_1\lambda^{N-1}+...+a_N$ where $a_k\in H^0(C,K_C^{\otimes k})$.  If the spectral curve is irreducible then the pair $(E,\phi)$ is {\em stable} meaning that for any $\phi$-invariant subbundle $F\subset E$, i.e. $\phi(F)\subset F\otimes K_\Sigma$, we have $\frac{c_1(F)}{\text{rank\ }F}<\frac{c_1(E)}{\text{rank\ }E}$.

The spectral curve associated to a pair defines a map from the moduli space $\cm=\cm_{N,d}$ of stable Higgs bundles of rank $N$ and degree $d$ on a compact Riemann surface $C$ of genus $g_C>1$, 
$$f:\cm \to \cb.$$
Here $\cb$ is the space of (possibly singular) spectral curves which can be identified with the following space:
    $$\cb=\bigoplus_{j=1}^N H^0(C,K_C^{\otimes j}).$$
    Fibres of $f$ are complex tori and they are singular in general.
    Let $f:\cm^{reg} \to \cb^{reg}$ be the restriction of $f$ to the open subset $\cm^{reg} \subset \cm$ consisting of smooth fibres.
    For any point $[\Sigma] \in \cb^{reg}$ the associated spectral curve $\Sigma$ is an irreducible curve of genus $g=N^2(g_C-1)+1$ (which is calculated via $\dim\cb=1+\sum_{j=1}^N(g_C-1)(2j-1)$).
    The deformation space of $\Sigma \subset T^* C$ coincides with $\cb^{reg}$.
    The natural projection $\pi: \Sigma \rightarrow C$ is a degree $N$ map.  The foliation is given by fibres of the projection map $\pi$. We consider only $\Sigma$ such that the morphism $\pi$ has only double ramification points.   
    
    Fibres over $\cb^{reg}$ are naturally identified with Jacobians of the spectral curves $\Sigma$ for $[\Sigma] \in \cb$ which is defined inside the cotangent bundle of the Riemann surface $C$. The tangent space of a fibre is naturally identified with $H^1(\Sigma,\co_\Sigma)$. The moduli space $\cm^{reg}$ is symplectic and the symplectic form produces a non-degenerate pairing between the tangent space of the base and the tangent base of the fibre
    \[H^0(\Sigma,K_\Sigma)\otimes H^1(\Sigma,\co_\Sigma)\to\bc
    \]
    which coincides with Serre duality.  The base space $\cb^{reg}$ parametrises embedded Lagrangian Jacobians in $\cm^{reg}$ and  embedded  curves $\Sigma\subset X$, which are automatically Lagrangian.  It is proven in  \cite{HitMod} that the deformation space of a compact holomorphic Lagrangian in a holomorphic symplectic K\"ahler manfold naturally has a special K\"ahler structure.  Hence there are two natural special K\"ahler structures defined on $\cb^{reg}$.  It is proven in \cite{BHuSpe} that the special K\"ahler structures coincide---see also \cite{HitInt}.


    Denote by $vdu$ the tautological 1-form on the cotangent bundle of $C$.  The pair of holomorphic coordinate systems $(\zeta^1, \dots, \zeta^g)$ and $(\eta_1, \dots, \eta_g)$ on $U_\Sigma\subset\cb^{reg}$ is obtained by integrating the 1-form $vdu$ over $a$-cycles and $b$-cycles of the spectral curve. More precisely, 
    $$\zeta^i=\int_{a_i}vdu, \qquad \eta_i=\int_{b_i}vdu.$$
    Given normalised holomorphic differentials $\omega_i$, $i=1,...,g$ on $\Sigma$, $vdu- \zeta^i\omega_i$ is holomorphic with zero $a$-periods so it vanishes, hence:
$$vdu= \zeta^i\omega_i.$$
    The tautological 1-form gives a canonical primitive of $\Omega_X$, which does not exist for more general symplectic $X$ so we instead use the coordinates $z^i$ defined in \eqref{zcoords} and the related coordinates $\w_i$.  The relation between these coordinates is as follows.  For $[\Sigma']\in U_\Sigma$
$$ z^i([\Sigma'])=\zeta^i([\Sigma'])-\zeta^i([\Sigma]),\qquad \w_i([\Sigma'])=\eta^i([\Sigma'])-\eta^i([\Sigma]).
$$
In particular, $z^i(\Sigma)=0=\w_i(\Sigma)$.  Note that we still have
$\tau_{ij}=\frac{\partial}{\partial z^i}\w_j$
since $\tau_{ij}=\frac{\partial}{\partial \zeta^i}\eta_j=\frac{\partial}{\partial z^i}(\w_j+\text{constant})=\frac{\partial}{\partial z^i}\w_j$.
    
    The action of $\bc^*$ on fibres of $T^*C$ induces an action on $\cb$ which preserves the conformal type of the spectral curve hence also $\tau_{ij}$ is preserved.  Under this action $vdu\mapsto\lambda vdu$ for $\lambda\in\bc^*$ hence $z^i\mapsto\lambda z^i$ and $\w_i\mapsto\lambda \w_i$. 
    We have
    \[F_0=\frac12 z^i\w_i\]
    since
    \begin{align*}
    \frac{\partial}{\partial z^j}\frac12 z^i\w_i&=\frac12\w_j+\frac12 z^i\frac{\partial}{\partial z^j}\w_i
    =\frac12\w_j+\frac12 z^i\tau_{ij}=\frac12\w_j+\frac12 z^i\tau_{ji}\\
    &=\frac12\w_j+\frac12 z^i\frac{\partial}{\partial z^i}\w_j=\frac12\w_j+\frac12\w_j=\w_j,
    \end{align*}
    where the second last equality used the fact that $\w_i$ is homogeneous of degree one under the $\bc^*$ action which is generated by $z^i\frac{\partial}{\partial z^i}$.  For more general symplectic $X\neq T^*C$, $F_0$ does not have the same simple formula.

    For $h\geq 2$ there is a similar formula for $F_h$.    
    \begin{align*}
    F_h&=\frac{1}{2h-2}\sum_\alpha\Res_{p=\alpha}\psi(p)\omega_{h,1}(p)\\
    &=\frac{1}{2h-2}\sum_{i=1}^g\oint_{a_i}\omega_{h,1}(p)\oint_{b_i}vdu(p)-\oint_{b_i}\omega_{h,1}(p)\oint_{a_i}vdu(p)\\
    &=\frac{1}{2-2h}\oint_{b_i}\omega_{h,1}(p)z^i
    \end{align*}
    where we sum over indices $i=1,...,g$ in the last expression, $\psi(p)$ is a primitive of the restriction of the tautological 1-form $vdu(p)$ on $\Sigma-\{a_i,b_i\}$ and we have used the Riemann bilinear relations.  Note also that $F_h$ is homogeneous of degree $2-2h$ which follows from topological recursion since inductively the recursion gives $\omega_{h,n}\mapsto \lambda^{2-2h-n}\omega_{h,n}$ under the $\bc^*$ action.

\subsubsection{Rank one case} 
    A rather trivial example is the rank one case which gives the deformation space of the zero section of a cotangent bundle $\Sigma\subset T^*\Sigma$.  The deformation space is $\cb=H^0(\Sigma,K_\Sigma)$ since any deformation of the zero section remains a section.  The vector space $\cb$ is isomorphic to its tangent space $T_{[\Sigma]}\cb=\cb$.  We have $\phi \in \Gamma(\cb,\Omega^1_\cb\otimes\ch)$ defined by $\phi(\eta)=[\eta]\in H^1(\Sigma;\bc)$ for any $\eta\in H^0(\Sigma,K_\Sigma)$.  The $a$-periods of $vdu$ define coordinates $z^i$ on $\cb$.    The sum $z^i[\omega_i]$, with respect to the basis of normalised holomorphic differentials $\omega_i$, $i=1,...,g$, represents the general point in $\cb$ and also the restriction of the tautological 1-form.  The Lagrangian embedding $\cb\to\bc^{2g}$ is defined globally and is simply the linear embedding $H^0(\Sigma,K_\Sigma)\to H^1(\Sigma;\bc)$.  The prepotential is $F_0=z^i\w_i=z^iz^j\tau_{ij}$ where $\tau_{ij}$ is constant on $\cb$, and the deformation tensor vanishes identically: $0=\bar{A}_\Sigma\in\overline{V}_\Sigma\otimes\overline{V}_\Sigma\otimes\overline{V}_\Sigma$.  In order to agree with the more general construction of coordinates $z^i$ for a symplectic surface, we would shift the coordinates by a constant $z^i-z^i([\Sigma])$. 

Note that for smooth symplectic structures, by Weinstein's theorem \cite{WeiSym} the neighbourhood of any Lagrangian submanifold is symplectomorphic to a neighbourhood of the zero section of the Lagrangian submanifold in its cotangent bundle equipped with its canonical symplectic structure. Unlike in the smooth category, a neighbourhood of a complex submanifold is not necessarily biholomorphically equivalent to a neighbourhood of a complex submanifold in the total space of its normal bundle.   
If a local holomorphic symplectomorphism exists between neighbourhoods of $\Sigma\subset X$ and $\Sigma\subset T^*\Sigma$ then the prepotential of the former must coincide with the prepotential in the rank one case above. 

\subsection{K3 surfaces}  \label{K3}

A rich class of examples of foliated symplectic surfaces arise from elliptic fibrations of K3 surfaces. Elliptic K3 surfaces form a dense codimension one subset of the moduli space of complex K3 surfaces. 
Recall that a K3 surface is called elliptic when there is a surjective morphism $\pi: X \rightarrow \bp^1$ whose generic fibre is a smooth curve of genus one.
Such morphisms are always flat and therefore all fibres have arithmetic genus one. The foliation $\cf$ on any such surface $X$ is defined by the fibres of the elliptic fibration.  The foliation is singular at a finite set of points, which can be avoided by a generic spectral curve inside the K3 surface.

The simplest class of K3 elliptic surfaces are obtained from Kummer surfaces of the form $X=E_1 \times E_2$, where $E_1$ and $E_2$ are elliptic curves. Two special fibrations are obtained via the projections of the surface $X$ to the quotients $E_i/\iota \cong \bp^1$, where $\iota$ is the elliptic involution on the curve. 

Elliptic surfaces $X\to\bp^1$ with a section are described by their Weierstrass form:
\[\fy^2=\fx^3+f(\fz)\fx+g(\fz)\]
where $\fx,\fy,\fz$ are local coordinates and $f(\fz), g(\fz)$ are polynomials of $\deg f=8$, $\deg g=12$. We equip the surface with the symplectic form 
$$\omega=\frac{d\fx \wedge d\fz}{\fy}.$$  
This equation defines an affine surface in $\bc^3$ with compactification $X$.

For a given polynomial 
\[p(z)=u_0+u_1z+ \dots + u_m z^m\]
we obtain a hyperelliptic curve $\Sigma\subset X$ defined via the equation $x=p(z)$, or equivalently
$$y^2=p(z)^3+f(z)p(z)+g(z).$$
The deformation space $\cb$ is parametrised by $\{u_0,...,u_m\}$.
An example of such a family of hyperelliptic curves can be found in \cite{takasaki2001hyperelliptic}.
 A choice of $a$-cycles on $\Sigma$ together with the foliation defines a spectral curve as in Section~\ref{corspec} and Theorem~\ref{thetatheorem} applies. 

Another family of examples of elliptic K3 surfaces arise from quartics in $\bp^3$, such as  the Fermat quartic:
$$X=\{z_0^4+z_1^4+z_2^4+z_3^4=0\}\subset\bp^3.
$$
The set of hyperplanes in $\bp^3$ will be used to define both the deformation space $\cb$ of curves in $X$ and
the elliptic fibration $X\to\bp^1$ as follows.  For $H$ a hyperplane in $\bp^3$ intersecting $X$ generically, let $\Sigma=H\cap X$ be an embedded genus $3$ curve.  Its deformation space $\cb$ is an open set in the set of hyperplanes $\bp^3_{\text dual}$ in $\bp^3$.  Consider a line in $\bp^3$ that is contained in $X$, i.e. $L\subset X$.  The $\bp^1$-family of hyperplanes in $\bp^3$ that contain $L$ defines the elliptic fibration $X\to\bp^1$.  A choice of $a$ and $b$-cycles on $\Sigma$ defines correlators on $\omega_{h,n}$ on $\Sigma$.  Again, Theorem~\ref{thetatheorem} applies in this case.

We can replace the genus 3 curve $\Sigma\subset X$ in the previous example by a genus 1 curve.  Instead choose a hyperplane in $\bp^3$ intersecting $X$ non-generically.  Given two such lines $L_1,L_2\subset\bp^3$ and a hyperplane $H$ containing $L_1$ define the elliptic curve $\Sigma=H\cap X-L$ with a 1-dimensional deformation space $\cb$, and use $L_2$ to define a foliation on $X$.  In other words, given
two different elliptic fibrations, we use one for the foliation and the other for the family of embedded curves.

    \section{Airy structures}  \label{quadlag} 
    In this section we give the formulation of topological recursion due to Kontsevich and Soibelman \cite{KSoAir}.  Given a quadratic Lagrangian $\cl\subset W$ we describe the corresponding Airy structure which is a collection of tensors satisfying $A$, $B$ and $C$ on $V$ satisfying quadratic constraints.   We then define the main example given by the quadratic Lagrangian $\cl_{\text{KS}}$ defined in \eqref{quadlagINTRO} which gives rise to abstract topological recursion \eqref{abstractTR}.

\subsection{Tate spaces}
    We outline the algebraic background needed to define an \emph{Airy structure}, on an infinite dimensional \emph{Tate space} \( W \), over a field \( \mathbf{k}\) of characteristic zero with discrete topology, from \cite{KSoAir}. Tate spaces can be used as a model for an infinite dimensional symplectic (topological) vector space. 
    
    Let \(V, U\) be topological vector spaces, with \emph{discrete toplogy}, both over a field \( \mathbf{k}\) with discrete topology. Let \(*\) denote the topological dual. Recall the discrete topology defines all subsets as open sets. 

    \begin{definition}[Tate space]
    A Tate space \(W\) is the direct sum
    \[ W = V \oplus U^*.\]
    \end{definition}
    As \(U\) has discrete topology, \(U^*\) has \emph{locally linearly compact} topology. 
    
    A topological vector space \(V'\) is \emph{linearly topologised} if there is a neighbourhood basis at zero of linear subspaces, and is Hausdorff. A linear variety \(A\) is a subset of the form \(v + U'\) where \(U'\) is a linear subspace of \(V'\). \(A\) is closed if \(U'\) is closed (or respectively open). Finally a linearly topologised vector space is \emph{linearly compact} if collections of linear varieties with the finite intersection property is non empty \cite[p.~74]{lefschetz1942algebraic}.
    
    Setting \(U=V\), there is an isomorphism \(V \cong (V^*)^*\) \cite{drinfeld_tate}. With \(U=V\), this gives \(W\) the property \(W \cong W^*\), making \(W\) a strong symplectic vector space, with a polarisation given by  \(V\).
    
    An Airy structure characterises a quadratic \emph{Lagrangian} subvariety \( \mathcal{L} \) in the polarised symplectic vector space \(W\). Define \( \cl\) as the zero set of the ideal generated by a collection of quadratic polynomials. Choose \emph{Darboux} coordinates \(\{x^k\mid k\in I\} \) on \(V^* \cong L=T_0\cl\) indexed by a set \( I\subseteq \mathbb{N}\), and note \( x^k \in (V^*)^* \cong V\), together with coordinates \(\{y_k\mid k\in I\} \) on \(V\), so that \(y_k \in  V^* \cong L\). The coordinates \(x^k\) and \(y_k\) can also be treated as formal variables in a coordinate ring.  For the main infinite-dimensional example $W$ in this paper, we will choose a particular set of Darboux coordinates, given in Definition~\ref{darbcoord}. 
    

    When \(W\) is infinite dimensional and \(I = \mathbb{N}\), we construct the coordinate ring \( \mathbf{k}[W]\) via the symmetric algebra:
    \[ \mathrm{S}(W^*) = \bigoplus \mathrm{S}^k( V^* \oplus V) = \bigoplus_k  \bigoplus_{\lambda \vdash k} \mathrm{S}^{\lambda}(V) \otimes \mathrm{S}^{\lambda}(V^*)  \cong \mathbf{k}[V] \otimes \mathbf{k}[V^*] \cong \mathbf{k}[W],\] 
    where \(\otimes\) is algebraic tensor product.
    Elements of \( \mathbf{k}[W]\) are formal combinations of variables \(x^{\sbt}\) and \( y_{\sbt}\), $x^k\neq 0$ only for a $k$ in a finite number of $k\in I$, \( x^{\sbt}\) terms are bounded in degree, and \( y_{\sbt}\) terms are bounded in degree. 
    
    Additionally, there is a natural isomorphism between completed tensor products, and a ring of formal power series in infinite variables, 
    \[\widehat{\mathrm{S}}(W^*)  = \mathbf{k}\llbracket W \rrbracket.  \]
    Where \( \mathbf{k}\llbracket W \rrbracket \) is given by completion at the maximal ideal \( \langle x^{\sbt}, y_{\sbt} \rangle \), allowing for formal sums of unbounded degree in \( x^{\sbt}\) and \( y_{\sbt}\).
    
    The symplectic structure on the vector space \(W\) corresponds naturally to a \emph{Poisson bracket} on \( \mathbf{k}[W]\) and on the completion \(\mathbf{k}\llbracket W \rrbracket\). The Poisson bracket is a map \[ \{ {\sbt}, {\sbt}\}: \mathbf{k}[ W]  \times \mathbf{k}[ W ] \rightarrow \mathbf{k}[ W ]\] defined by the coordinates, \[ \{ y_j,x^i\} := \delta^i_j,\quad \{x^i,x^j\} := 0,\quad \{ y_i, y_j \}: = 0\] and extending to polynomials and formal series via the Leibniz rule. This gives \( k[W]\) the structure of a Lie algebra.
    
    \begin{example}
    Let $(\Sigma,R)$ be a curve equipped with a divisor $R\subset\Sigma$. Equip $\Sigma$ with a choice of $a$-cycles in $H^1(\Sigma;\bz)$.  The main example we consider in this paper is 
    \(V_\Sigma \) 
    defined in \eqref{vsigma} to be the vector space of residueless global meromorphic differentials on $\Sigma$, holomorphic on $\Sigma-R$ with zero \(a\)-periods.  It is equipped with the discrete topology.
    \end{example}
    
    
\subsection{Airy structures}   \label{airystr}
    
    Consider a quadratic Lagrangian \(\mathcal{L}\) in \(W\) defined by a (possibly infinite) collection of quadratic polynomials, \( H_i \in \mathbf{k}[ W ] \) given by
    \[ H_i=-y_i+a_{ijk} \, x^j x^k+ 2 b_{ij }^{k } \, x^j y_k +c_{i}^{ jk}\,y_j y_k, \quad i,j,k \in I \subseteq \mathbb{N}\]
    where we sum over repeated indices.
    Linearising \(H_i\) defines the tangent space at \(0\) by \( T_0 \mathcal{L} =\{y_i=0\}\).
     
    With respect to a polarisation, i.e. a choice of \(V\subset W\) such that $W\cong V\oplus V^*$, the coefficients  naturally form tensors:
    \begin{alignat}{3}
    \label{airytensor}
    A &=(a_{ijk})  &&\in V\otimes V\otimes V,\nonumber\\
    B &=(b_{ij}^k) &&\in V^*\otimes V\otimes V,\\   
    C &=(c_i^{jk}) &&\in V^*\otimes V^*\otimes V,\nonumber
    \end{alignat}
    where $\displaystyle(a_{ijk}):=a_{ijk}x^ix^jx^k$, $\displaystyle(b_{ij}^k):=b_{ij}^kx^ix^jy_k$ and $\displaystyle(c_i^{jk}):=c_i^{jk}x^iy_jy_k$.

    Any functions \(H_i\) which define a Lagrangian submanifold define an ideal with respect to the Poisson bracket---see for example \cite{WeiCoi}.  When \(H_i\) are quadratic this produces a Lie algebra \( \mathfrak{g}\) with structure constants \(g_{ij}^k\), given by the closure of the Poisson bracket from \(\mathbf{k}[W]\):
    \[ \{ H_i, H_j \} = g_{ij}^k H_k. \]
    The closure of this Lie bracket induces the following constraints on the tensors $A$, $B$ and $C$ known as an Airy structure on $V$.  

    \begin{definition}[Airy structure] \label{defairy} An Airy structure on \(V\) is a collection of tensors \eqref{airytensor} satisfying the homogeneous constraints:
    \begin{align*}
       2 \left(  b_{ji}^k - b_{ij}^k \right) &= g_{ij}^k,\\
       4\left( a_{jks} b_{it}^k -  a_{iks} b_{jt}^k \right) &=  g_{ij}^k  a_{kst}, \\
       4\left(  a_{jks} c_i^{k t} -  a_{ik s} c_j^{k t} +  b_{is}^k b_{jk}^t  -  b_{ik}^t b_{js}^k \right) & = 2 g_{ij}^k b_{k s t}^k, \\ 
       4 \left( b_{jk}^s c_i^{kt}  - b_{ik}^s c_j^{kt}\right) &= g_{ij}^k c_k^{st}.
    \end{align*}
    \end{definition}
    Airy structures were introduced by Kontsevich and Soibelman in \cite{KSoAir} and the homogeneous constraints appeared in \cite{ABCOABCD}. Their algebraic structure was generalised in \cite{BBCCNHig,BMaNew}.
 
    We study the Lagrangian \( \mathcal{L}\) in formal neighbourhoods of the origin $0\in W$.  This approach is necessary when $W$ is infinite-dimensional since in that case \(  \mathrm{Spec} \left( \mathbf{k}[W]/I(\cl) \right) \) defines a point in \(W\), for $I(\cl)=\langle H_1, H_2,...\rangle$.   In a formal neighbourhood of the origin $0\in W$, $\cl$ corresponds to a formal scheme, which we also denote by $\cl$.  It is given by completion of $\mathbf{k}[W]/I$ along a maximal ideal \(\mathfrak{m} = \langle x^{\sbt}, y_{\sbt} \rangle\) (representing zero in \(W\)). The quotient of $\mathbf{k}[W]/I$ by \( \mathfrak{m}^{k+1}\) corresponds to the \(k\)-th formal neighbourhood. So the colimit of the quotient gives  
    \[ \cl = \colim_n  \mathrm{Spec} \left( \mathbf{k}[W]/\{I,\mathfrak{m}^{n+1}\} \right) \]
    via the projective limit functor. 

    The Lagrangian $\cl$ is realised as a graph via a fixed point iteration as follows.
    Put $H_i=-y_i+\hat{H}_i$ so that $\hat{H}_i$ is quadratic in $\{x^j,y_k\}$.  The image of $\cl$ in the $n$-th formal neighbourhood of $0\in W$ is the graph:
    \begin{equation}  \label{graph}
    \{y_i^{(n)}=a_{ijk}x^jx^k+2b_{ij}^ka_{k\ell m}x^jx^\ell x^m+...\mid i=1,2,3,...\}.
    \end{equation}
    where the polynomial $y_i^{(n)}$ is obtained iteratively by
    $$y_i^{(n+1)}=\hat{H}_i(x^j,y_k^{(n)}).
    $$
    We have $y_i^{(1)}=0$ hence $y_i^{(2)}=a_{ijk}x^jx^k$ and $y_i^{(3)}$ is the cubic expression above.  This procedure produces $y_i^{(n)}$ as a degree $n$ polynomial in $x^j$ defined in the $n$th formal neighbourhood of $0\in W$, for any $n$.  Since $y_i^{(n+1)}$ and $y_i^{(n)}$ agree up to degree $n$, we can drop the superscript $y_i^{(n)}$ and write $y_i(\{x^\bullet\})$ when the $n$th formal neighbourhood is understood.
    
    \begin{example}
    \label{example:conic_classical}
    Consider the conic \(-y+x^2 + 2 xy + y^2=0\). Solving for \(y\), and taking the formal expansion of the square root gives
    \( y(x) = u_0(x) = x^2 + 2 x^3 + 5 x^4 + \cdots + \frac{(2n)!}{(n+1)!n!} x^{n+1}  + \cdots\). The coefficients are Catalan numbers, which count rooted binary trees.
    \end{example}
    
    Quite generally, any Lagrangian submanifold can be represented locally via a generating function.  The restriction of a primitive of a symplectic form to a Lagrangian submanifold is exact since integrals around contractible closed loops vanish by the
    Lagrangian condition.  Apply this to the primitive  $-y_idx^i$ of $\Omega$ in a formal neighbourhood of $0\in\cl$ to get  a function $S_0$ defined in a neighbourhood in $W$ of $0\in\cl$ satisfying
    $$y_idx^i=dS_0(\{x^i\}).
    $$
    Explicitly
    $$S_0(x)=\frac{1}{3} a_{ijk}x^ix^jx^k+\frac{1}{6} \left(b_{ij}^ka_{k\ell m}+b_{i\ell}^ka_{kj m}+b_{im}^ka_{kj\ell}\right)x^ix^jx^\ell x^m+...,
    $$
    The symmetry of $S_0(x)$ uses the closure under the Poisson bracket $\{ H_i, H_j \} = g_{ij}^k H_k$.  A consequence of the symmetry is
    $$ b_{ij}^ka_{k\ell m}+b_{i\ell}^ka_{kj m}+b_{im}^ka_{kj\ell}= b_{ji}^ka_{k\ell m}+b_{j\ell}^ka_{ki m}+b_{jm}^ka_{ki\ell}
    $$
    which agrees with the constraints in Definition~\ref{defairy}. For example we see that
    $$\frac{\partial}{\partial x^i} S_{0,4}=\frac{4}{6}(b_{ij}^ka_{k\ell m}+b_{i\ell}^ka_{kj m}+b_{im}^ka_{kj\ell})x^jx^\ell x^m= 2 b_{ij}^ka_{k\ell m}x^jx^\ell x^m=y_i^{(4)}
    $$
    as required.  

    \subsection{Tate spaces associated to a curve in a symplectic surface}
    \label{section:universal}

    In this section a bundle of Tate spaces is associated to curves in a symplectic surface.  Following \cite{KSoAir}, define the symplectic (Tate) vector space 
        $$ W_{\text{Airy}}=\left\{ J=\sum_{n\in\bz} J_nz^{-n}\frac{dz}{z}\mid J_0=0, \exists N \text{ such that }J_n=0,\ n>N\right\}
        $$
        with symplectic form
        $$\Omega_{W_\text{Airy}}(\eta_1,\eta_2)=\Res_{z=0}f_1\eta_2,\quad df_1=\eta_1,\eta_2\in W_\text{Airy}.
        $$
        The skew-symmetric bilinear form $\Omega_{W_\text{Airy}}$ is translation-invariant hence closed.  It is non-degenerate because for $J=\sum_{n\geq k} J_nz^{-n}\frac{dz}{z}\in W_\text{Airy}$, with $J_k\neq 0$, then $\Omega_{W_\text{Airy}}(J,z^k\frac{dz}{z})\neq 0$.  The locally holomorphic differentials define $L_{\text{Airy}}=\{J\mid J_n=0,n>0\}\subset W_{\text{Airy}}$, which is a Lagrangian subspace tangent to the quadratic Lagrangian $\cl_{\text{Airy}}\subset W_{\text{Airy}}$ defined in Example~\ref{quadlagKW}.
     
     Given a compact curve $\Sigma$, a non-empty finite subset $R\subset\Sigma$, and local coordinates $z_\alpha$ defined in a neighbourhood of $\alpha\in R$, define $W=(W_{\text{Airy}})^R$.  Each copy of $W_{\text{Airy}}$ uses the local coordinate $z_\alpha$.  The subspace $L=(L_{\text{Airy}})^R\subset W$ consists of locally holomorphic differentials.  Define $G_\Sigma\subset W$ by
     \begin{equation} \label{gsigma}
         G_\Sigma=\{ \eta\in H^0(\Sigma,\Omega^1(\Sigma-R)) \text{\ meromorphic on }\Sigma\mid\Res_{r\in R}\eta=0\}
     \end{equation}
    where we identify $G_\Sigma$ with its image under the injective map $G_\Sigma\to W$.  Given a choice of Torelli basis on $\Sigma$, define
    \begin{equation} \label{vsigma}
        V_\Sigma=\{ \eta\in G_\Sigma\mid\oint_{a_i}\eta=0,\ i=1,...,g\}.
    \end{equation}
    We have $L\oplus V_\Sigma=W$, proven in \eqref{Vlagcomp}, hence $V_\Sigma$ defines a polarisation
    $$W\cong V_\Sigma\oplus V_\Sigma^*.
    $$
  
  A family of pairs $(\Sigma,R)$ is obtained naturally out of a foliated symplectic surface.
    Let  $(X,\omega, \mathcal{F})$ be a symplectic surface with a Lagrangian foliation $\cF$.  Consider a curve \( \Sigma \subset X \). The curve $\Sigma\subset X$ and choice of Torelli basis determines $V_\Sigma\subset W$, defined to be those residueless differentials on $\Sigma$ with zero $a$-periods.  Recall that $\mathcal{H}\rightarrow \mathcal{B}$, defined in \eqref{cohvb}, is a vector bundle with fibre $\mathcal{H}_{[\Sigma]} = H^1(\Sigma;\mathbb{C})\cong\bc^{2g}$  and $\bg\rightarrow \mathcal{B}$ is a vector bundle with fibre $G_{\Sigma}$.  Define $[.]:\bg\rightarrow \ch$ which maps a residueless meromorphic differential to its cohomology class.
    
   The quadratic Lagrangian $\cl_{\text{KS}}\subset\widehat{W}$ defined in a formal neighbourhood of $0\in W$ by \eqref{quadlagINTRO} can be alternatively defined via the following residue constraints \cite{KSoAir}.  Choose local FD coordinates $(u_\alpha,v_\alpha)$ in a neighbourhood \(U_\alpha \subset X\) of $\alpha$ satisfying the properties of Definition~\ref{specoord}.

   A point $\eta\in\cl_{\text{KS}}$ satisfies the following residue constraints:
    \begin{alignat}{3}  
    \label{rescon1}
        &\Res_\alpha\left(v_\alpha-\frac{\eta}{du_\alpha}\right)u_\alpha^mdu_\alpha &= 0,\quad m\geq 1,\\
        &\Res_\alpha\left(v_\alpha-\frac{\eta}{du_\alpha}\right)^2u_\alpha^mdu_\alpha &= 0,\quad m\geq 0. \label{rescon2}
    \end{alignat} 
    The condition that the differential $\Res_\alpha\eta u_\alpha^mdu_\alpha=0$ for $m\geq 0$ is equivalent to $\eta$ having skew-invariant principal part under each local involution $\sigma_\alpha$ defined by $\cf$.\\

    It is convenient to express the quadratic Lagrangian $\cl_{\text{KS}}\subset W$ in the form of Section~\ref{airystr} with respect to the following choice of Darboux coordinates.
    \begin{definition}  \label{darbcoord}
    Given $(\Sigma,R)$, choose Darboux coordinates $\{x^i,y_i\}$ on $W$ satisfying $\Omega=dx^i\wedge dy_i$, $x^i\in V_\Sigma\subset W\cong W^*$ such that
    $$\frac{1}{2\pi i}\oint_{b_j}x^k=\delta_{jk},\quad j\in\{1,...,g\},\ k\in\bn
    $$
    and $y_i\in L\subset W$.
    \end{definition}
The coordinates $\{x^i,y_i\}$ satisfy the following properties.
\begin{enumerate}
    \item $[x^i]=0$ for $i>g$.
    \item $y_i=\omega_i$, the normalised holomorphic differential, for $i=1,...,g$.
\end{enumerate}
The first property uses the fact that $\oint_{a_j}x^i=0$ since $x^i\in V_\Sigma$, and combined with $\oint_{b_j}x^i=0$, we see that all periods vanish hence so does the cohomology class $[x^i]$.

The second property uses $\Omega_W=dx^i\wedge dy_i$ and the Riemann bilinear relations to deduce $\Omega_W(\omega_i,x^j)=\delta_{ij}$.  Hence $y_i=\omega_i$ follows from the nondegeneracy of $\Omega_W$.
    
For the existence of coordinates $\{x^i,y_i\}$ satisfying the conditions of Definition~\ref{darbcoord}, use the fact that the map $G_\Sigma\to H^1(\Sigma;\bc)$ is surjective. So there exists $x^i$, $i=1,...,g$ satisfying $\frac{1}{2\pi i}\oint_{b_j}x^i=\delta_{ij}$.  Then for $i>g$, complete $\{x^i\mid i=1,...,g\}$ to a basis of meromorphic differentials; for $i=i(\alpha,n)$ where $\alpha\in R$ and $n\in\bn$, let \[x_0^i=d(z_{\alpha}^{-n})+ \text{holomorphic terms} \in V_\Sigma\] 
(by the Hodge decomposition theorem) and also let  
\[ x^i=x_0^i-\sum_{j=1}^gx^j\frac{1}{2\pi i}\oint_{b_j}x_0^i. \]  
In terms of $x^i$, we have \[y_i^0=\frac{1}{n}d(z_\alpha^n)+\sum_{j=1}^g\omega_j\frac{1}{2\pi i}
\oint_{b_j}x_0^i\] for $i>g$.  Then define \[y_i=y^0_i-\sum_{j=1}^gy_j\Omega_W(y^0_i,x^j).\]
The coordinates $\{x^i,y_i\}$ from Definition~\ref{darbcoord} are not unique, since, for example, $x^1\mapsto x^{1}+x^{g+1}$ and $x^i\mapsto x^i$, $i>1$ (which induces a linear change of the variables $y_i$) also satisfies the conditions.  However, the set of vector fields
$$\frac{\partial}{\partial x^1},\frac{\partial}{\partial x^2},...,\frac{\partial}{\partial x^g},
$$
is well-defined independent of the ambiguity in the choice of coordinates $\{x^i,y_i\}$.  This can be seen in two ways.  We have
$$\frac{\partial}{\partial x^i}=\omega_i,\quad i=1,...,g
$$
where the normalised holomorphic differential $\omega_i$ represents a vector field independent of coordinate choices.  Or more directly,
the linear change $x^1\mapsto x^1+x^{g+1}$ and $x^i\mapsto x^i$ induces $\frac{\partial}{\partial x^1}\mapsto \frac{\partial}{\partial x^1}$ and $\frac{\partial}{\partial x^{g+1}}\mapsto \frac{\partial}{\partial x^{g+1}}+\frac{\partial}{\partial x^1}$ leaving $\frac{\partial}{\partial x^1}$ invariant.

    \subsection{Symplectic reduction}  \label{sympred}

        Consider a symplectic manifold $(M,\omega)$ that admits a proper Hamiltonian action of an abelian Lie group $G$ and an invariant moment map $\mu:M\to\mathfrak{g}^*$.  The moment map is characterised by
        \begin{equation}  \label{mom}
            \omega(\xi_u(m),\cdot)=d\langle\mu(m),u\rangle
        \end{equation}
        where $u\in\mathfrak{g}$ defines the vector field $\xi_u$ on $M$ by
        $\xi_u(m)=\frac{d}{dt}(g(t)\cdot m)|_{t=0}
        $ and $g'(0)=u$.
        
        For any regular value $a$ of $\mu$, define the symplectic quotient
        $$ M\hspace{-1mm}\sslash\hspace{-1mm} G:=\mu^{-1}(a)/G.
        $$
        Then $M\hspace{-1mm}\sslash\hspace{-1mm} G$ inherits a symplectic form (depending on $a$).
        
        Apply these ideas to a symplectic vector space $(W,\omega)$ equipped with a translation-invariant symplectic form $\omega$.  Let $U\subset W$ be an isotropic subspace, so $\omega|_{U}=0$ or equivalently $U\subset U^\perp$.  Then $U$ acts on $W$ by translations, and hence preserves $\omega$.  The moment map $\mu$ is given by the quotient map
        $$0\to U^\perp\to W\stackrel{\mu}{\to} U^*\to 0
        $$
        because $\omega(u,v)=\langle\mu(v),u\rangle,\ \forall  u,v\in W$ agrees with \eqref{mom} (since $u=\xi_u(m)$ and $d\langle\mu(m),u\rangle(v)=\langle\mu(v),u\rangle$).  Hence $U^\perp=\mu^{-1}(0)$ and the symplectic quotient is given by:
        $$ W\hspace{-1mm}\sslash\hspace{-1mm} U:=U^\perp/U.
        $$
        The symplectic form $\overline{\omega}$ on $W\hspace{-1mm}\sslash\hspace{-1mm} U$ is defined by $\overline{\omega}(\bar{v}_1,\bar{v}_2)=\omega(v_1,v_2)$ where $v_i\in U^\perp$ is any lift of $\bar{v}_i\in U^\perp/U$.  The right hand side is independent of the lift since $\omega(v_1+u,v_2)=\omega(v_1,v_2)$ for any $u\in U$.  The 2-form $\overline{\omega}$ is closed since it is translation invariant.  It is non-degenerate since $\overline{\omega}(\bar{v}_1,\bar{v}_2)=\omega(v_1,v_2)=0$ for all $\bar{v}_2$, hence all $v_2$ implies that $v_1\in U$ hence $\bar{v}_1\equiv 0$.
        
        To apply this to $G_\Sigma\subset W$ defined by the pair $(\Sigma,R)$ we need the following.
        \begin{lemma}[\cite{KSoAir}]  \label{coisotropic}
        $G_\Sigma\subset W$ is coisotropic.
        \end{lemma}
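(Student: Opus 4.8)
The plan is to show $G_\Sigma\subset W$ is coisotropic, i.e. $G_\Sigma^\perp\subseteq G_\Sigma$, by computing the symplectic pairing on $W=(W_{\text{Airy}})^R$ explicitly and identifying it with a global residue pairing on meromorphic differentials. Recall that the symplectic form on $W$ is $\Omega_W(\eta_1,\eta_2)=\sum_{\alpha\in R}\Res_{z_\alpha=0}f_1\eta_2$, where $df_1=\eta_1$ locally at each $\alpha$. For two \emph{global} residueless meromorphic differentials $\eta_1,\eta_2\in G_\Sigma$ (holomorphic on $\Sigma-R$), the key observation is that this sum of local residues equals a global residue pairing $\sum_{\alpha\in R}\Res_\alpha f_1\eta_2$ where now $f_1$ is a local primitive of $\eta_1$ near each $\alpha$. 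First I would verify that this pairing is well defined on $G_\Sigma$: since $\eta_1$ is residueless, $f_1$ is single-valued in a punctured neighbourhood of each $\alpha$, so the residues make sense and are independent of the choice of local coordinate.

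The heart of the argument is to show that $\Omega_W$ restricted to $G_\Sigma$ is, up to sign, the intersection pairing on $H^1(\Sigma;\bc)$ pulled back along $G_\Sigma\to H^1(\Sigma;\bc)$. Concretely, I would invoke the Riemann bilinear relations: for global differentials $\eta_1,\eta_2$ on $\Sigma$ that are holomorphic outside $R$ with vanishing residues, the total sum of residues $\sum_{\alpha}\Res_\alpha f_1\eta_2$ computes the symplectic (cup-product) pairing of their cohomology classes $[\eta_1],[\eta_2]\in H^1(\Sigma;\bc)$ in terms of $a$- and $b$-periods,
\[
\Omega_W(\eta_1,\eta_2)=\sum_{k=1}^g\left(\oint_{a_k}\eta_1\oint_{b_k}\eta_2-\oint_{b_k}\eta_1\oint_{a_k}\eta_2\right).
\]
This identity follows by cutting $\Sigma$ along the Torelli basis to a fundamental polygon, applying Stokes/residue theorem to $f_1\eta_2$, and noting that the boundary contributions collapse to the stated bilinear period expression precisely because $\eta_1,\eta_2$ are residueless. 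The upshot is that $\Omega_W|_{G_\Sigma}$ factors through the quotient map $[\,\cdot\,]:G_\Sigma\to H^1(\Sigma;\bc)$.

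Given this factorisation, coisotropy is immediate. An element $\eta\in G_\Sigma^\perp$ satisfies $\Omega_W(\eta,\xi)=0$ for all $\xi\in G_\Sigma$; by the period formula above, this forces $[\eta]$ to pair trivially with $[\xi]$ for all $\xi\in G_\Sigma$. Since the map $G_\Sigma\to H^1(\Sigma;\bc)$ is surjective (stated in the introduction) and the intersection pairing on $H^1(\Sigma;\bc)$ is nondegenerate, we conclude $[\eta]=0$, i.e. all periods of $\eta$ vanish. But $\eta\in W$ a priori only lies in $G_\Sigma^\perp\subset W$; I would then check that $\eta\in G_\Sigma^\perp$ already lies in $G_\Sigma$, which is where the definition of $G_\Sigma^\perp$ as the symplectic-orthogonal \emph{inside} $W$ matters. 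This identifies $G_\Sigma^\perp$ with the subspace of $G_\Sigma$ of differentials with trivial cohomology class, namely exact differentials $df$ with $f$ meromorphic on $\Sigma$ holomorphic away from $R$, giving $G_\Sigma^\perp\subseteq G_\Sigma$ as required.

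I expect the main obstacle to be the careful bookkeeping in identifying the \emph{local} residue pairing defining $\Omega_W$ on $(W_{\text{Airy}})^R$ with the \emph{global} residue/period pairing on $\Sigma$, and in particular pinning down exactly which differentials in $W$ lie in $G_\Sigma^\perp$. One must be attentive that $G_\Sigma$ sits inside the larger Tate space $W$ (containing purely local differentials with no global extension), so $G_\Sigma^\perp$ is computed in $W$, not in $G_\Sigma$ itself; verifying that orthogonality against \emph{all} of $G_\Sigma$ nonetheless forces an element of $W$ back into $G_\Sigma$ is the delicate point, and is where the surjectivity of $G_\Sigma\to H^1(\Sigma;\bc)$ and nondegeneracy of the intersection form do the real work.
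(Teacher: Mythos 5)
There is a genuine gap, and it sits exactly where you flag ``the delicate point'' at the end: your proposal never supplies the argument that an element of $W$ orthogonal to all of $G_\Sigma$ must already lie in $G_\Sigma$, and the tools you propose for it cannot do the job. Your middle step deduces ``$[\eta]=0$'' from the Riemann bilinear relations, but that deduction presupposes that $\eta$ is a \emph{global} residueless meromorphic differential, i.e.\ that $\eta\in G_\Sigma$ --- which is precisely the conclusion being sought. For a general $\eta\in G_\Sigma^\perp\subset W=(W_{\text{Airy}})^R$, i.e.\ an arbitrary tuple of local Laurent differentials at the points of $R$, there are no periods and no cohomology class, so the period formula for $\Omega_W(\eta,\xi)$ is unavailable. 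Surjectivity of $G_\Sigma\to H^1(\Sigma;\bc)$ and nondegeneracy of the cup product live entirely in the $2g$-dimensional quotient and say nothing about the infinite-dimensional complement of $G_\Sigma$ in $W$; they cannot force a purely local differential back into $G_\Sigma$.

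The missing ingredient in the paper's proof is a transversality statement proved by Riemann--Roch: filtering $W$ by pole order, one counts $\dim(W_k\cap G_\Sigma)-\dim(W_{k-1}\cap G_\Sigma)=|R|=\dim(W_k/W_{k-1})$, which yields $G_\Sigma+L=W$ and, after normalising $a$-periods, the direct sum $V_\Sigma\oplus L=W$ where $L$ is the Lagrangian of locally holomorphic differentials. With this decomposition one writes any $\eta\in G_\Sigma^\perp$ as $\eta_1+\ell$ with $\eta_1\in V_\Sigma$, $\ell\in L$; the Riemann bilinear relations (your step, used correctly here because $\eta_1,\eta_2\in V_\Sigma$ are global with vanishing $a$-periods) give $\Omega_W(\eta_1,\eta_2)=0$, hence $\Omega_W(\ell,\eta_2)=0$ for all $\eta_2\in V_\Sigma$, and the perfect pairing between the complementary Lagrangians $L$ and $V_\Sigma$ forces $\ell=0$, so $G_\Sigma^\perp\subset V_\Sigma\subset G_\Sigma$. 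Your use of the bilinear relations to show that $\Omega_W$ factors through $H^1(\Sigma;\bc)$ on $G_\Sigma$ is correct and is half of the paper's argument; without the Riemann--Roch complementarity $V_\Sigma\oplus L=W$, however, the proof does not close.
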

        \begin{proof}
        We first show that $G_\Sigma$ and $L$ intersect transversally, i.e.
        \begin{equation}  \label{transversal}
        G_\Sigma+L=W.
        \end{equation} 
        Define 
        \[W_k=\{ J\in W\mid J_n=0,n\geq k\}\]
        so $H=W_1\subset W_2\subset...\subset W_k\subset W_{k+1}\subset ... \subset W=\cup_{k>0} W_k$.  We have $\dim(W_k/W_{k-1})=|R|$ and by Riemann-Roch this vector space can be represented by elements of $G_\Sigma$ since
        \begin{align*}
        \dim(W_k\cap G_\Sigma)-\dim(&W_{k-1}\cap G_\Sigma)\\
        &=\dim H^0(\Sigma,K_\Sigma(kR))-\dim H^0(\Sigma,K_\Sigma((k-1)R))\\
        &=k|R|+1-g-((k-1)|R|+1-g)=|R|.
        \end{align*}
        Hence $\left(W_k\cap G_\Sigma\right)+L=W_k$ and 
        \eqref{transversal} follows by taking the union over $k>0$.
        
        Define $V_\Sigma \subset G_\Sigma$ to consist of those differentials with vanishing $a$-periods.  It is easy to see that the proof of \eqref{transversal} can be adjusted to yield:
        \begin{equation} \label{Vlagcomp}
         V_\Sigma \oplus L= W,
        \end{equation}
        since the elements of $W_k\cap G_\Sigma$ can be chosen to be normalised to have vanishing $a$-periods.
        
        Let $\eta_1+\ell\in G_\Sigma^\perp$ for $\eta_1\in V$ and $\ell\in L$.  Then $\Omega_W(\eta_1+\ell,\eta_2)=0$ for all $\eta_2\in V$ since $ \eta_1+\ell$ annihilates all elements of $G_\Sigma$, in particular those from $V$.  But $\Omega_W(\eta_1,\eta_2)=0$ by the Riemann bilinear relations:
        \begin{equation}  \label{RiemBilRel}
        \sum\Res_{r\in R} f_1\eta_2=\frac{1}{2\pi i}\sum_{j=1}^g\oint_{b_j}\eta_1\oint_{a_j}\eta_2-\oint_{b_j}\eta_2\oint_{a_j}\eta_1=\frac{1}{2\pi i}\int_\Sigma[\eta_1]\wedge[\eta_2]
        \end{equation}
        since all $a$-periods vanish in the middle expression of \eqref{RiemBilRel}.  Here $df_1=\eta_1$ for a locally defined function $f_1$.  Hence $\Omega_W(\ell,\eta_2)=0$ for all $\eta_2\in V$.  But $\omega$ is symplectic, so for any non-zero $\ell\in L$ there is $\eta_2\in V$ such that $\Omega_W(\ell,\eta_2)\neq 0$.  We conclude that $\ell=0$ so $\eta_1+\ell=\eta_1\in V$, hence 
        \[G_\Sigma^\perp\subset V\subset G_\Sigma\]
        as required.
        \end{proof}
        Strengthening 
        Lemma~\ref{coisotropic}, elements of  $G_\Sigma^{\perp}$, are exact, i.e.
        $$
        G_\Sigma^{\perp}=\{ \eta=df,\  f \text{\ holomorphic on }\Sigma-R\}
        .$$
        To show this, first note the inclusion of exact differentials into $G_\Sigma^{\perp}$ follows
        from the fact that if $\eta_1=df_1$ for a {\em global} meromorphic function $f_1$, then $\sum\Res_{r\in R} f_1\eta_2=0$ since it is the sum of the residues of the meromorphic differential $f_1\eta_2$. 
        
        For the other direction, by Lemma~\ref{coisotropic}, any $\eta_1\in G_\Sigma^{\perp}$ lives in $G_\Sigma$ hence it is (the local expansion of) a globally defined meromorphic differential on $\Sigma$.  The $b$-periods of $\eta_1$ can be calculated using \eqref{RiemBilRel}.  Let $\omega_i$, $i=1,...,g$ be the normalised holomorphic differentials on $\Sigma$, so $\oint_{a_j}\omega_i=\delta_{ij}$.  We have
        $$
        \oint_{b_j}\eta_1=\oint_{a_j}\omega_j\oint_{b_j}\eta_1-\oint_{a_j}\eta_1\oint_{b_j}\omega_j=\Omega_W(\eta_1,\omega_j)=0
        $$
        since $\omega_j\in G_\Sigma$ and $\eta_1\in G_\Sigma^{\perp}$.  But the residues of $\eta_1$ and all of its $a$-periods and $b$-periods vanish.  Hence 
        \[f(p):=\int^p_{p_0}\eta_1\]
        is well-defined and $\eta_1$ is exact.
        
        Thus the symplectic quotient of $W$ is given by
        $$ W\hspace{-1mm}\sslash\hspace{-1mm} G_\Sigma^\perp:=G_\Sigma/G_\Sigma^\perp\cong\ch_\Sigma= H^1(\Sigma;\bc),
        $$
        where the isomorphism uses the fact that elements of $G_\Sigma$ define cohomology classes on $\Sigma$, the quotient by exact differentials sends a meromorphic differential (with zero residues) to its cohomology class, and the map is surjective.  Clearly
        \[V_\Sigma\to\overline{V}_\Sigma=V_\Sigma/G_\Sigma^\perp\]
        where $\overline{V}_\Sigma\subset\ch_\Sigma$ consists of those cohomology classes with vanishing $a$-periods.
        
        
\subsubsection{The quadratic Lagrangian \texorpdfstring{$\cl_{\text{KS}}$}{L}}        
    Given a symplectic quotient
    \[ M\hspace{-1mm}\sslash\hspace{-1mm} G:=\mu^{-1}(a)/G\]
    and a Lagrangian submanifold $\cl\subset M$, if $\cl$ intersects $\mu^{-1}(a)$ transversally, then the quotient of $\cl\cap\mu^{-1}(a)$ defines a Lagrangian submanifold of $\mu^{-1}(a)/G$.  
    
    The intersection $\cl_{\text{KS}}\cap G_\Sigma$ is transversal since $W=G_\Sigma+L=G_\Sigma+T_0\cl_{\text{KS}}$ which is proven in  \eqref{transversal}.   
    To make sense of the quotient of $\cl_{\text{KS}}\cap G_\Sigma$ by $G_\Sigma^\perp$, we need to treat the symplectic reduction of $(W,\Omega_W)$ algebraically, since $\cl_{\text{KS}}$ lives in a formal neighbourhood of $0\in W$.
    The quotient
    $$G_\Sigma^\perp\to G_\Sigma\to \ch_\Sigma
    $$
    corresponds to the ring homomorphism
    \begin{align*}
    \mathbf{k}[\ch_\Sigma]&\to \mathbf{k}[G_\Sigma]^{G_\Sigma^\perp}\\
    (z^i,\w_i)&\mapsto(\oint_{a_i},\oint_{b_i})
    \end{align*}
    where $\mathbf{k}=\bc$ and $\mathbf{k}[V]=\bigoplus_k \mathrm{S}^k(V^*)$ is the ring of regular functions on the vector space $V$.
    
    In terms of the coordinates defined in Definition~\ref{darbcoord}, we have:
    $$
    \mathbf{k}[G]=\mathbf{k}[W]/\{x^i\mid i>g\}
    $$
    since the restriction $x^i|_{G_\Sigma}$, for $i\in\bn$ depends only on its cohomology class $[x^i]\in H^1(\Sigma;\bc)$.  In particular, the ambiguity in the choice of coordinates $x^1,...,x^g$ in Definition~\ref{darbcoord} disappears under restriction to $G_\Sigma$.  Also
    $$
    \mathbf{k}\left[\cl_{\text{KS}}\right]=\mathbf{k}[W]/\{y_i=a_{ijk}x^jx^k+2b_{ij}^ka_{k\ell m}x^jx^\ell x^m+...\},
    $$
    which is defined in a formal neighbourhood of $0\in W$, where the series for $y_i$ are defined in \eqref{graph}.
    From the commutative square
    $$\begin{array}{ccc}
         \mathbf{k}[W]&\longrightarrow& \mathbf{k}[G_\Sigma]\\
         \downarrow&&\downarrow\\
    \mathbf{k}\left[\cl_{\text{KS}}\right]&\longrightarrow&\mathbf{k}\left[{\cl_{\text{KS}}\cap G_\Sigma}\right] 
    \end{array}
    $$
    we find that
    $$
    \mathbf{k}\left[\cl_{\text{KS}}\cap G_\Sigma\right]=\mathbf{k}[W]/\{y_i=a_{ijk}x^jx^k+...,x^m=0,m>g\}\cong \mathbf{k} \llbracket x^1,...,x^g \rrbracket.
    $$
    
    Compose $\mathbf{k}[\ch_\Sigma]\to \mathbf{k}[G_\Sigma]$ with the right vertical arrow in the commutative square to get:
    \begin{equation}  \label{BLcorr}
       \begin{array}[t]{ccc}
            \mathbf{k}[\ch_\Sigma]&\to& \mathbf{k}\left[\cl_{\text{KS}}\cap G_\Sigma\right]  \\
            (z^i,\w_i)&\mapsto&(x^i,y_i+\tau_{ij}x^j)
       \end{array}
    \end{equation}
    which is a map from a formal neighbourhood of $0\in\ch_\Sigma$ to a formal neighbourhood of $0\in W$.
    In Section~\ref{formalconv} it is proven that the kernel of \eqref{BLcorr} is given by the ideal $\{\w_i=\w_i(z^1,...,z^g)\}$ hence \eqref{BLcorr} defines an isomorphism $\mathbf{k}[\widehat{\cb}]\cong \mathbf{k}\left[\cl_{\text{KS}}\cap G_\Sigma\right]$.
    
    \subsubsection{Choice of quadratic Lagrangian}  \label{BGW}
An Airy structure is equivalent to the choice of a quadratic Lagrangian.  The work of Kontsevich and Soibelman \cite{KSoAir} is based on the quadratic Lagrangian $\cl_{\text{KS}}=\cl_{\text{Airy}}^R$ where $\cl_{\text{Airy}}$ is built from the Kontsevich-Witten tau function.  In place of $\cl_{\text{Airy}}$ we can use a quadratic Lagrangian $\cl_{\text{Bessel}}$ built from the Br\'{e}zin-Gross-Witten tau function of the KdV hierarchy which arises out of a unitary matrix model studied in \cite{BGrExt,GWiPos}. For $m=0,1,...$ the operators 
$$L_m=-\frac12\frac{\partial}{\partial x^{2m+1}} +\frac{\hbar}{4} \hspace{-2mm}\mathop{\sum_{i+j=2m}}_{i,j\text{ odd}} \hspace{-2mm} \frac{\partial^2}{\partial x^i \partial x^j}
 +\frac12\mathop{\sum_{i=1}}_{i\text{ odd}}^\infty i x^i \frac{\partial}{\partial x^{i+2m}}+ \frac{1}{16} \delta_{m,0},
$$
satisfy Virasoro relations
\[
[L_m, L_n] = (m-n) L_{m+n}, \quad \text{for } m, n \geq 0,
\]
The Br\'{e}zin-Gross-Witten tau function is uniquely defined by
$$
L_m Z^{\text{BGW}}(\hbar,x^1,x^3,...)=0,\quad m=0,1,2,...
$$ 
and the initial condition
$$\log Z^{\text{BGW}}(x^1,0,0,...)=\frac18\log(1-x^1).$$
Analogous to $Z^{\text{KW}}$, the tau function $Z^{\text{BGW}}$ is shown in \cite{NorNew} also to be a generating function for intersection numbers over $\overline{\cm}_{h,n}$.  The Virasoro operators give rise to the quadratic Lagrangian $\cl_{\text{Bessel}}\subset W_{\text{Airy}}$ defined by the ideal:
\begin{align*}
   H_k(x^{\sbt},y_{\sbt})&=-y_k,\quad k\in\bz^+_{\text{even}},\\
H_k(x^{\sbt},y_{\sbt})& = \hbar L_{\frac{k-1}{2}}\left(x^{\sbt},\hbar\frac{\partial}{\partial x^{\sbt}}\right)|_{\hbar\frac{\partial}{\partial x^i}=y_i}\quad k\in\bz^+_{\text{odd}},\\
&=-\tfrac12 y_k +\tfrac14 \hspace{-2mm}\mathop{\sum_{i+j=k-1}}_{i,j\text{ odd}} \hspace{-2mm} y_i y_j
 +\tfrac12\mathop{\sum_{i=1}}_{i\text{ odd}}^\infty i x^i y_{i+k-1}+ \tfrac{1}{16} \delta_{k,3}.
\end{align*} 
Define $\cl=\cl_{\text{Bessel}}^R$. More generally, one can also combine a product of a combination of copies of $\cl_{\text{Airy}}$ and $\cl_{\text{Bessel}}$.  This produces topological 
recursion on irregular spectral curves \cite{DNoTop} with local behaviour at points in $R$ giving topological 
recursion over the Bessel curve \cite{DNoTopB}.  In the case $\cl=\cl_{\text{Bessel}}^R$ the tensor $A_\Sigma=0$ and the Airy structure consists of the tensors $B_\Sigma$ and $C_\Sigma$.  There are residue constraints analogous to \eqref{rescon1} which define $\cl$:
\[\Res_\alpha\left(dv_\alpha-\eta\right)u_\alpha^m = 0=\Res_\alpha\left(dv_\alpha-\eta\right)^2\frac{u_\alpha^m}{du_\alpha} = 0,\quad m\geq 1. 
\]
If $\cl=\cl_{\text{Airy}}^{R_1}\times_{\mathrm{Spec} \,\mathbf{k}} \cl_{\text{Bessel}}^{R_2}$ where $R_1\cup R_2=R$ then the residue constraints above, respectively the residue constraints \eqref{rescon1}, are used at $\alpha\in R_2$, respectively $\alpha\in R_1$.

    \subsection{Quantum Airy structures}  \label{qairy}
    
    Now let \( \mathcal{D} = \mathbf{k}\llbracket x^{\sbt}, \hbar \partial_{\sbt} \rrbracket \llbracket \hbar \rrbracket \) be a graded Weyl algebra with Lie bracket given by \( [ x^i , x^j] =  [\hbar \partial_i, \hbar \partial_j ] = 0\) and \( [ \hbar \partial_i , x^j ] =  \hbar \delta^j_i \).

    Consider the differential operators \(\widehat{H}'_i \in \mathcal{W} \) 
    \begin{align*}
            \widehat{H}'_i = \hbar \partial_i + a_{ijk} x^j x^k +   2 \hbar \,  b_{ij}^k  \, x^j \partial_k + \hbar^2 c_{i}^{jk} \partial_j \partial_k.
    \end{align*}
        

    Kontsevich and Soibleman define a \emph{quantum Airy structure} as the deformation quantisation of the classical Airy structure on the Lagrangian \( \mathcal{L}\). Deformation quantisation is a functor which replaces the commutative algebra \( \mathbf{k}\llbracket W \rrbracket \) with the non commutative Weyl algebra of differential operators \( \mathcal{W} \). First the coordinates are mapped by \( x^i \rightarrow x^i \) and \( y_i \rightarrow \hbar \partial_i \) where \(\partial_i \) can be identified with the vector field, or derivation, \( \frac{\partial}{\partial x^i}\) when \( \hbar \) is invertible.  
    Further, the Poisson Lie algebra \( \mathfrak{g}\) with Poisson bracket has to be identified with the Lie algebra structure of the \(\widehat{H}_i\) with a Lie bracket. A necessary condition to do this, is that the second cohomology vanishes, \(H^2(\mathfrak{g}, \mathbf{k})=0\) \cite{KSoAir}. This is a choice of central extension of \(\mathfrak{g}\) and the Lie algebra structure of the quantum and classical cases coincide. So  
    \[ H_i \rightarrow \widehat{H}_i = \widehat{H}'_i + \hbar \varepsilon_i\] 
    and 
    \[ [\widehat{H}_i, \widehat{H}_j] = \hbar \{ H_i , H_j \}_{x^{\sbt}\rightarrow x^{\sbt}, y_{\sbt} \rightarrow \hbar \partial_{\sbt}} + \hbar g_{ij}^k \varepsilon_k.\]
    
    \begin{definition}
    A \emph{quantum Airy structure} is the collection of \( \widehat{H}_i := \widehat{H}'_i + \hbar \varepsilon_i\), and an extra constraint:
    \[ 2 \left( a_{jst} \, c_i^{st} - a_{ist} \, c_j^{st} \right) = g_{ij}^k \varepsilon_k.\]
    \end{definition}
    
    When a quantum Airy structure arises from deformation quantisation, this gives rise to a wavefunction supported on \(\mathcal{L}\). A wavefunction is a generator of a cyclic module \(\mathcal{E}\) over \(\mathcal{D}\), given by the quotient \( \mathcal{E} = \mathcal{D} / \mathcal{D} \langle \widehat{H}_i \rangle  \). This module encodes the solution to the \(\widehat{H}_i\) acting as operators on \( \mathbf{k}\llbracket x^{\sbt} \rrbracket \llbracket \hbar \rrbracket \). The wavefunction \( \psi_{\mathcal{L}} \in \mathbf{k}\llbracket x^{\sbt} \rrbracket \llbracket \hbar \rrbracket\) is computed using the ansatz 
    \[ \psi_{\mathcal{L}} = \exp(S(x^{\sbt}))\]
    where
    \[ S(x^{\sbt}) = \sum_{h \geq 0} \hbar^{h-1} S_h(x^{\sbt}), \quad S_h(x^{\sbt}) \in \mathbf{k} \llbracket x^{\sbt}\rrbracket\]
    and solving the differential equations
    \[ \widehat{H}_i \exp(S(x^{\sbt})) = 0. \] 
    Modulo \(\hbar \), \( \psi_{\mathcal{L}}\) is a function on \( \widehat{\mathcal{L}}\).

    \begin{example} Consider the quantised conic with \( \varepsilon = 0\):
    \[ \left( - \hbar \frac{\partial}{\partial x} + x^2 + 2 \hbar x \frac{ \partial }{\partial x}+ \hbar^2 \frac{\partial^2}{\partial^2 x} \right) \psi_{\mathcal{L}}(x) = 0. \]
    Computing some terms
    \[ \psi_{\mathcal{L}}(x) = \exp \left( \frac{1}{\hbar} \int dx \, ( u_0(x)  + \hbar \, u_1(x) + \mathcal{O}(\hbar^2) ) \right), \]
    with \(  u_0(x)\) as example \ref{example:conic_classical}, and \(u_1(x) = 2 x + 10 x^2 + \cdots + (4^n - \frac{(2n)!}{(n!)^2}) x^n + \cdots \) a generating function for counting numbers of rooted two-face \(n\)-edge maps in the plane, (1-loop Feynmann diagrams) \cite{feynmaps}, and in general rooted \(u_h\) counts \(h\)-face \(n\)-edge maps. Then 
    \[ S_h(x) = \int dx\, u_h(x) . \]
    \end{example}

    Kontsevich and Soibelman \cite{KSoAir} prove that the coefficients of \(S_{h,n}\) of \(S\) satisfy \emph{abstract topological recursion} defined as follows. Denote \( S_{h,n; \sbt:i} = \partial_i S_{h,n; \sbt }\). Apply the \( \widehat{H}_i\) to \(\psi_{\mathcal{L}} \)  and solve for \(0 \): 
    \begin{align*}
        & a_{ijk} x^j x^k + \sum_h \bigg( 2 \, \hbar \, b_{ik}^{j} \sum_n  S_{h,n;j} x^k + \\
        &\hbar^2 c_i^{jk}  \left(\sum_n S_{h,n;j,k} + \sum_n S_{h,n;j} S_{h,n;k} \right) - \hbar \, S_{h,n;i} + \hbar \, \epsilon_i \bigg)  = 0.
    \end{align*}
    Gathering coefficients:
    \begin{align}  \label{abstractTR}
        S_{h,n;i,i_1,\dots,i_{n-1}} =&  \, c^{jk}_i S_{h-1,n+1;j,k,i_1, \dots i_{n-1}} \\
        \nonumber +& \, 2 \sum_{\alpha = 1 }^{n-1} b^k_{i \, i_\alpha} S_{h,n-1;k i_{\{1,\dots n-1\}}/\{\alpha\}}  \\ 
        \nonumber +& \sum_{\substack{h_1 + h_2 = h  \\ I_1 \sqcup I_2 = \{1, \dots, n-1\}} } c^{jk}_i S_{h_1, |I_1| + 1; j}\, S_{h_2, |I_2|+1;k} 
    \end{align}
    produces a recursive formula known as abstract topological recursion. The sum of \(h-1\) and \(h_1+h_2 = h\) terms gives a resemblance to topological recursion. 
    
    The symmetry of $S_{h,n}$ for $h=0$ uses the same argument as for the classical case, which uses closure of the Poisson bracket $\{ H_i, H_j \} = g_{ij}^k H_k.$  For higher genus, the argument is given in \cite[Theorem 2.4.2]{KSoAir} for finite dimensional $V$ which suffices here since $V_\Sigma$ is the union of finite dimensional subspaces graded by the degree of poles, and $S_{h,n}$, and all $S_{h',n'}$ for $2h'-2+n<2h-2+n$ live inside one of these finite dimensional subspaces.  
    
    Topological recursion of Eynard and Orantin \cite{eynard_orantin}, can be seen as a particular specialisation of abstract topological recursion. Restricting to the odd \(H_i\) recovers topological recursion:
    \[ S^{\text{odd}}_{h,n} \rightarrow \frac{1}{n!}\omega_{h,n}.\]

    \begin{remark}
    The constructions of $\omega_{h,n}$ via the Eynard-Orantin recursion \eqref{rec} and via abstract topological recursion \eqref{abstractTR} produce different proofs of the symmetry of $\omega_{h,n}$ which demonstrates a departure between the two constructions.  The proof in \cite{eynard_orantin} using \eqref{rec} expresses the difference $\omega_{h,n}(p_1,p_2,...,p_n)-\omega_{h,n}(p_2,p_1,...,p_n)$ as a sum of a collection of terms which are shown to vanish rather non-trivially.  The rather elegant proof in \cite{KSoAir} is a consequence of the fact that the Hamiltonians that define a Lagrangian submanifold generate an ideal, expressed above via  $\{ H_i, H_j \} = g_{ij}^k H_k$. is rather elegant. 
    \end{remark}

    
    \section{Formal and convergent series}   \label{formalconv}

Let $(X,\Omega,\cf)$ be a foliated symplectic surface, $\Sigma\subset X$ and $\cb$ the deformation space of $\Sigma$ in $X$.   Recall from \eqref{vsigma} that $\Sigma$ defines a polarisation $V_\Sigma\subset W$ of the symplectic vector space $W$ of locally defined residueless meromorphic differentials on $\Sigma$.

In this section we study the following commutative diagram from \cite{KSoAir}:
\begin{equation}  \label{lagmap}
\begin{array}{ccccl}\cl_{\text{KS}}\cap G_{\Sigma}&\longrightarrow&G_{\Sigma}&\longrightarrow& V_\Sigma\oplus V_\Sigma^*\cong W\\
\downarrow&& \downarrow&&\\
\widehat{\cb}_{[\Sigma]}&\stackrel{}{\longrightarrow} &\ch_\Sigma&\stackrel{\cong}{\longrightarrow}&\overline{V}_\Sigma\oplus \overline{V}_\Sigma^*
\end{array}
\end{equation}
which shows how the quadratic Lagrangian 
\[\cl_{\text{KS}}\to W\cong V_\Sigma\oplus V_\Sigma^*\]
defined in \eqref{quadlagINTRO}, and via residue constraints in \eqref{rescon1} and \eqref{rescon2}, behaves under symplectic reduction 
\[W\hspace{-1mm}\sslash\hspace{-1mm} G_\Sigma^\perp:=G_\Sigma/G_\Sigma^\perp\cong \ch_\Sigma\]
where $\ch_\Sigma=H^1(\Sigma;\bc)$.
Its image is a formal neighbourhood of a point of the Lagrangian embedding of a neighbourhood $U_{[\Sigma]}\subset\cb$ of $[\Sigma]\in\cb$
\[U_{[\Sigma]}\to\ch_\Sigma\cong\overline{V}_\Sigma\oplus \overline{V}_\Sigma^*\]defined in \eqref{Bemb}.  More precisely, $\cl_{\text{KS}}$ is defined in a formal neighbourhood of $0\in W$ and its intersection with the zero level set of the moment map maps to a formal neighbourhood $\widehat{\cb}_{[\Sigma]}\stackrel{\iota}{\to}\cb$ of $[\Sigma]\in\cb$.

An explicit section $\theta\in\Gamma(\widehat{\cb}_{[\Sigma]},G_\Sigma)$ of the bundle $\bg\rightarrow \mathcal{B}$ with fibre $G_{\Sigma}$ is constructed in Theorem~\ref{phisec} below.  It is given by a formal series which takes its values in $\cl_{\text{KS}}\cap G_{\Sigma}$ and in fact defines an isomorphism
\[\widehat{\cb}_{[\Sigma]}\cong\cl_{\text{KS}}\cap G_{\Sigma}\]

with inverse producing the left vertical arrow in \eqref{lagmap}.  It is proven in Theorem~\ref{phisec} that under the quotient map $\bg\to\ch$ to the bundle $\ch\rightarrow \mathcal{B}$ with fibre $\ch_{\Sigma} \cong G_{\Sigma}/G_{\Sigma}^\perp$, the section $\theta$ maps to $[\theta]=\iota^*[\theta]\in\Gamma(\widehat{\cb}_{[\Sigma]},\ch)$ which is the composition 
\begin{equation}  \label{restheta}
    \widehat{\cb}_{[\Sigma]}\stackrel{\iota}{\longrightarrow} U_{[\Sigma]}\stackrel{[\theta]}{\longrightarrow} \ch|_{U_{[\Sigma]}}
\end{equation}
and by abuse of notation it is given the same name as the analytic section $[\theta]$ defined in \eqref{cohtheta}.    The composition \eqref{restheta} defines the lower left horizontal arrow in \eqref{lagmap}.  Hence the formal series $\theta$ maps under the quotient to an analytic series $[\theta]$.   

    For $L=T_0\cl_{\text{KS}}\subset W$ given by the Lagrangian subspace of locally holomorphic differentials, defined in a neighbourhood of $R\subset\Sigma$, the symplectic form $\Omega_W$ defines a natural isomorphism of Lagrangian subspaces $L\cong V_\Sigma^*$.  Similarly, in the symplectic quotient $\ch_\Sigma$, the symplectic form $\Omega$ defines a natural isomorphism of Lagrangian subspaces $H^0(\Sigma,K_\Sigma)\cong \overline{V}_\Sigma^*$.  Define the linear map 
    \[h:H^0(\Sigma,K_\Sigma)\to L\]
    which maps a holomorphic differential to its local expansion at $R\subset\Sigma$.  For any $T\in\text{Hom}(L\otimes L,V_\Sigma)$ the square
    \[
     \begin{tikzcd} L\otimes L\arrow{r}{T} & V_\Sigma \arrow{d} \\ H^0(\Sigma,K_\Sigma)\otimes H^0(\Sigma,K_\Sigma)\arrow{r} \arrow{u}{h\otimes h} & \overline{V}_\Sigma  \end{tikzcd} 
    \]
    induces a map $T\mapsto[T\circ (h\otimes h)]\in\text{Hom}(H^0(\Sigma,K_\Sigma)\otimes H^0(\Sigma,K_\Sigma),\overline{V}_\Sigma),$
    where $[\cdot]$ is the map $V_\Sigma\to\overline{V}_\Sigma$. This defines the right vertical arrow in the following commutative diagram:
\begin{equation}  \label{tenscovder}
    \begin{tikzcd}
    V_\Sigma\otimes V_\Sigma\otimes V_\Sigma\arrow{d}\arrow{r}{\Omega_W}  &\text{Hom}(L\otimes L,V_\Sigma)\arrow{d}  \\
    \overline{V}_\Sigma\otimes \overline{V}_\Sigma\otimes \overline{V}_\Sigma  \arrow{r}{\Omega} &
    \text{Hom}(H^0(\Sigma,K_\Sigma)\otimes H^0(\Sigma,K_\Sigma),\overline{V}_\Sigma). \end{tikzcd}
\end{equation}
    
 The section $\theta$ allows us to associate vector fields over $\cb$ to vector fields over $\cl_{\text{KS}}$.  In particular, this leads to a relationship between the tensor $A_\Sigma\in V_\Sigma\otimes V_\Sigma\otimes V_\Sigma$, which is part of the Airy structure arising from $\Sigma\subset(X,\Omega,\cf)$, and the tensor $\bar{A}_\Sigma\in\overline{V}_\Sigma\otimes \overline{V}_\Sigma\otimes \overline{V}_\Sigma$ representing the Donagi-Markman cubic.  In general, for a Lagrangian submanifold of a polarised symplectic vector space $\cl\subset V\oplus V^*$, the tensor $A$ is defined via the map
    \[T_p\cl\otimes T_p\cl\to V\]
given by variation of a vector field with respect to another vector field.  This uses a canonical extension of any given vector in $T_p\cl$ to a local vector field so that covariant differentiation gives a tensor, meaning it depends only on vectors.  By relating vectors, their canonical extensions to vector fields, and covariant differentiation upstairs and downstairs in \eqref{lagmap} we prove $A_\Sigma\to\bar{A}_\Sigma$ via the map \eqref{tenscovder}.       

\subsection{The section $\theta\in\Gamma(\widehat{\cb}_{[\Sigma]},G_\Sigma)$.}
 We define a section $\theta\in\Gamma(\widehat{\cb}_{[\Sigma]},G_\Sigma)$ in terms of holomorphic differentials $\omega_i$ normalised over the $a$-periods and the topological recursion correlators $\omega_{0,n}$ calculated via \eqref{rec}.  Represent elements of the formal neighbourhood $\widehat{\cb}_{[\Sigma]}$ with respect to the coordinates $\{z^1,...,z^g\}$ defined in \eqref{zcoords} satisfying $z^i([\Sigma])=0$, and sum over indices in $\{1,...,g\}$.   For any residueless meromorphic differential $\eta$ defined on $\Sigma$, we use the normalised periods defined in \eqref{bnorm}:
\[
     \oint_{\hat{b}_k}\eta:=\frac{-1}{2\pi i}\oint_{b_k}\eta.
\]
\begin{thm}   \label{phisec}
Define a section $\theta\in\Gamma(\widehat{\cb}_{[\Sigma]},G_\Sigma)$ by
\begin{equation}  \label{thetaTR}   
\theta=z^i\omega_i-\frac12z^iz^j\oint_{\hat{b}_i}\oint_{\hat{b}_j}\omega_{0,3}-\frac{1}{3!}z^iz^jz^k\oint_{\hat{b}_i}\oint_{\hat{b}_j}\oint_{\hat{b}_k}\omega_{0,4}-...
\end{equation}
Then $\theta$ satisfies the following properties:
\begin{enumerate}
\item It takes its values in $\cl_{\text{KS}}$.
\item Its cohomology class $[\theta]\in \Gamma(\widehat{\cb}_{[\Sigma]},\ch)$ is analytic in $z^1,...,z^g$ and coincides with the local section defined in \eqref{cohtheta}.
\end{enumerate}
\end{thm}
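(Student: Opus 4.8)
My plan is to treat the two assertions separately, establishing the cohomological statement (2) first since it is essentially formal once the Eynard--Orantin variation formula is available, and then attacking the membership $\theta\in\cl_{\text{KS}}$ in (1), which I expect to be the real work.

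For (2), the input I would assume from Appendix~\ref{variation} is the variation formula $\nabla^{\text{GM}}_{\partial/\partial z^k}\omega_{0,n}=\oint_{\hat b_k}\omega_{0,n+1}$, together with the fact that the chosen $b$-cycles are locally constant so that $\oint_{\hat b_k}$ commutes with $\nabla^{\text{GM}}$. Writing $\Phi_{i_1\cdots i_n}:=\oint_{\hat b_{i_1}}\cdots\oint_{\hat b_{i_n}}\omega_{0,n+1}$, so that the $n=1$ term is $\Phi_i=\omega_i$ by the normalisation \eqref{bnorm} and $\nabla^{\text{GM}}_{\partial/\partial z^j}\Phi_{i_1\cdots i_n}=\Phi_{i_1\cdots i_n j}$, I would differentiate the cohomology class of \eqref{thetaTR} term by term. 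The derivative splits into a piece coming from the explicit powers of $z$ and a piece coming from $\nabla^{\text{GM}}\Phi$; because of the alternating signs $\tfrac{(-1)^{n-1}}{n!}$ these two pieces telescope, all contributions of positive $z$-degree cancel in pairs, and only the degree-zero term $\Phi_j=\omega_j$ survives. Hence $\nabla^{\text{GM}}_{\partial/\partial z^j}[\theta]=\omega_j$, while $[\theta]|_{z=0}=0$ is immediate. These are exactly the defining properties \eqref{theta}, \eqref{cohtheta} of the analytic section, and the first-order linear system $\nabla^{\text{GM}}_{\partial/\partial z^j}[\theta]=\omega_j$ with $[\theta](0)=0$ (integrable since $\nabla^{\text{GM}}_{z^k}\omega_j=\Phi_{jk}$ is symmetric) has a unique formal solution, which must therefore be the Taylor expansion of the analytic $[\theta]$; this simultaneously yields analyticity and the claimed coincidence. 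The same bookkeeping recovers $\oint_{a_i}[\theta]=z^i$ and, on differentiating once more, Corollary~\ref{om3cor}.

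For (1) I would verify the residue constraints \eqref{rescon1} and \eqref{rescon2} that cut out $\cl_{\text{KS}}$, working in the coordinates of Definition~\ref{specoord} near each $\alpha\in R$, where $\Sigma$ is $u_\alpha=v_\alpha^2$ and $\sigma_\alpha$ is $v_\alpha\mapsto -v_\alpha$, and expanding everything in the local coordinate and in powers of $z$. The linear constraint \eqref{rescon1} is the statement that $\theta$ has skew-invariant principal part at each $\alpha$, which holds because each $\omega_{0,n}$ does; at the base point $z=0$ both constraints hold trivially since $\theta=0$ and $\Res_\alpha v_\alpha^k u_\alpha^m\,du_\alpha=0$. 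The content is the quadratic constraint \eqref{rescon2}, which I would establish by induction on the $z$-degree: the degree-one part vanishes because the leading term $z^i\omega_i$ is holomorphic, and the degree-$n$ part of \eqref{rescon2} becomes precisely a relation among the $\oint_{\hat b}$-periods, residues and local principal parts of $\omega_{0,3},\dots,\omega_{0,n+1}$ that is equivalent to the Eynard--Orantin recursion \eqref{rec}, the KS residue constraints being the abstract loop equations solved by \eqref{rec}, with the variation formula used to resum the $z$-dependence.

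I expect (1) to be the main obstacle. The delicate point is that $[\theta]$ only pins $\theta$ down modulo the exact differentials $G_\Sigma^\perp$, so (1) carries genuine information beyond (2): one must control the full local expansion of $\theta$ at $R$, not merely its cohomology class, and show that its exact part is exactly the one dictated by the recursion. A conceptually cleaner but bookkeeping-heavy alternative, which I would keep in reserve, is to identify $\theta$ with the graph parametrisation of $\cl_{\text{KS}}$ from \eqref{graph}: using the correspondence between abstract topological recursion \eqref{abstractTR} and \eqref{rec} (the coefficients $S_{0,n}$ of the genus-zero generating function being the $\oint_{\hat b}$-periods of $\omega_{0,n}$, cf. Proposition~\ref{A=w3} for $n=3$), one checks that the point of $\cl_{\text{KS}}$ whose $x$-coordinates are $z^1,\dots,z^g$ has $y$-coordinates $y_i=\partial_i S_0$ resumming exactly to \eqref{thetaTR}.
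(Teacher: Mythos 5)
Your argument for part (2), as written, does not go through. The coefficients $\oint_{\hat{b}_{i_1}}\cdots\oint_{\hat{b}_{i_n}}\omega_{0,n+1}$ in \eqref{thetaTR} are fixed differentials on the base curve $\Sigma$, evaluated at $z=0$; they are not varying sections of $\bg$. Differentiating $\theta$ on the formal neighbourhood therefore produces only the contribution from the explicit powers of $z$, namely $\partial_j\theta=\widehat{\omega}_j$ as in \eqref{holdifser}; there is no second piece ``$\nabla^{\text{GM}}\Phi$'' to telescope against, and if you did treat the coefficients as varying sections and applied \eqref{EOvar} to them, the two contributions would \emph{add} rather than cancel. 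Worse, the conclusion you draw, $\nabla^{\text{GM}}_{\partial/\partial z^j}[\theta]=\omega_j$ with $\omega_j$ the fixed class at the base point, would force $[\theta]$ to be linear in $z$ and $\tau_{ij}$ to be constant, contradicting Corollary~\ref{om3cor}. What must actually be shown is that $[\widehat{\omega}_j]$ coincides with the $\nabla^{\text{GM}}$-Taylor expansion of the \emph{varying} class $\phi(\partial/\partial z^j)$; this is precisely where \eqref{EOvar} enters, as the identification $\nabla^\cf_I\omega_j|_\Sigma=-\oint_{\hat{b}_j}\oint_{\hat{b}_I}\omega_{0,|I|+2}$ of the Taylor coefficients of the analytic family of normalised differentials. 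The paper packages this as Proposition~\ref{thetader}, i.e.\ $\iota^*(vdu)=vdu|_\Sigma-\theta$, using $\nabla^\cf_i(vdu)=-\widetilde{\omega}_i$, after which (2) follows because $\nabla^\cf$ covers $\nabla^{\text{GM}}$ by \eqref{connFGM}.

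For part (1) you correctly identify the difficulty but leave the decisive step unproved: the assertion that the degree-$n$ piece of \eqref{rescon2} ``becomes precisely a relation \dots equivalent to the Eynard--Orantin recursion'' is the entire content of the claim, and establishing it would itself require the resummation you defer. The paper's proof (Proposition~\ref{phisec1}) avoids the loop equations altogether: once $\theta=vdu|_\Sigma-\iota^*(vdu)$ is in hand, the constraints follow in two lines from the facts that $\iota^*$ is a ring homomorphism, so that $\left(v_\alpha-\frac{\theta}{du_\alpha}\right)^ku_\alpha^m du_\alpha=\iota^*\!\left(v_\alpha^ku_\alpha^mdu_\alpha\right)$, and that $\Res_\alpha$ commutes with the expansion \eqref{expansion}, giving $\sum_I\frac{z^I}{|I|!}\partial_I\Res_\alpha(v_\alpha^ku_\alpha^mdu_\alpha)=0$ because $v_\alpha^ku_\alpha^mdu_\alpha$ is holomorphic near $\alpha$ on every nearby curve. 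This identity is the missing ingredient in your proposal; without it, or a genuinely executed induction identifying the graded pieces of the quadratic constraint with the genus-zero loop equations, part (1) remains open. Your reserve strategy via the graph parametrisation \eqref{graph} is viable in principle but likewise hinges on matching the coefficients $S_{0,n}$ with the iterated $\hat{b}$-periods of $\omega_{0,n}$, which is again the content of Proposition~\ref{thetader}.
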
  
More precisely, $[\theta]$ is the restriction of an analytic section to a formal neighbourhood of $[\Sigma]$ given in \eqref{restheta}.  The analyticity of $[\theta]$ contrasts with the formal series for $\theta$. The proof of Theorem~\ref{phisec} is given by Propositions~\ref{phisec1} and \ref{phisec2}.

A cohomology class is characterised by its periods along a Torelli basis:
\[[\theta]=(\oint_{a_i}\theta,\oint_{b_i}\theta\mid i=1,...,g)\in\bc^{2g} \llbracket z^1,...,z^g \rrbracket .\]
The periods are:
\[\oint_{a_i}\theta=z^i,\quad\oint_{b_i}\theta=\w_i(z^1,...,z^g).
\]
\begin{corollary}  \label{bper}
The Taylor expansion of \normalfont{$\w_i(z^1,...,z^g)$} around $\{z^i=0\}$ is:
\normalfont{
\begin{equation}   \label{bperi}   
\w_i=z^j\tau_{ij}-\frac12z^jz^k\oint_{b_i}\oint_{\hat{b}_j}\oint_{\hat{b}_k}\omega_{0,3}-\frac{1}{3!}z^jz^kz^\ell\oint_{b_i}\oint_{\hat{b}_j}\oint_{\hat{b}_k}\oint_{\hat{b}_\ell}\omega_{0,4}-...
\end{equation}}
\end{corollary}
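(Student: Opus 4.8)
The plan is to obtain \eqref{bperi} by reading off the $b_i$-period directly from the explicit series \eqref{thetaTR}, since $\w_i$ is by definition the period $\oint_{b_i}\theta$. The computation is an almost immediate term-by-term integration, so I do not expect a genuine obstacle; the only points requiring care are the justification that the resulting formal series really is a Taylor expansion and that the iterated period integrals may be taken in the stated order.

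First I would invoke Theorem~\ref{phisec}. Part (2) asserts that the cohomology class $[\theta]$ is analytic in $z^1,\dots,z^g$ and coincides with the analytic section \eqref{cohtheta}. Since $\w_i=\oint_{b_i}\theta$ is a period of $[\theta]$, it is therefore an analytic function of $z^1,\dots,z^g$, and the formal series produced below is genuinely its Taylor expansion about $\{z^i=0\}$. This reduces the corollary to evaluating $\oint_{b_i}$ on each monomial of \eqref{thetaTR}.

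Next I would integrate term by term. Each summand of \eqref{thetaTR} is a residueless meromorphic differential in a single free variable: for the linear term it is $z^j\omega_j$, and for the term of degree $n$ it is the differential obtained from $\omega_{0,n+1}$ by integrating $n$ of its legs against the normalised cycles $\hat b_{j_1},\dots,\hat b_{j_n}$. Applying $\oint_{b_i}$ to the free leg, the linear term contributes $z^j\oint_{b_i}\omega_j=z^j\tau_{ij}$ by the definition of the period matrix, while the degree-$n$ term contributes $-\tfrac1{n!}z^{j_1}\cdots z^{j_n}\oint_{b_i}\oint_{\hat b_{j_1}}\cdots\oint_{\hat b_{j_n}}\omega_{0,n+1}$, which is exactly the corresponding summand of \eqref{bperi}.

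The one substantive point is that applying $\oint_{b_i}$ to the free leg commutes with the normalised $b$-integrations already performed on the remaining legs. This holds because each correlator $\omega_{0,n+1}$ is a tensor product of meromorphic differentials in distinct variables, so the iterated periods are integrals over a product of cycles in $\Sigma^{n+1}$ and may be evaluated in any order; moreover each intermediate integration yields a residueless differential with vanishing $a$-periods, hence an element of $V_\Sigma\subset G_\Sigma$, so every integral in sight is well defined. Finally, the symmetry of $\omega_{0,n+1}$ makes the choice of distinguished free leg immaterial, so the expression is unambiguous and equals \eqref{bperi} as stated.
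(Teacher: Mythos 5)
Your proof is correct and follows exactly the route the paper intends: the corollary is stated as an immediate consequence of Theorem~\ref{phisec}, obtained by applying $\oint_{b_i}$ term by term to the series \eqref{thetaTR} and using $\oint_{b_i}\theta=\w_i$ and $\oint_{b_i}\omega_j=\tau_{ij}$. Your additional remarks on analyticity, the order of the iterated period integrals, and the symmetry of the correlators are sound and only make explicit what the paper leaves implicit.
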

where the normalised periods \eqref{bnorm} are used in \eqref{bperi} except for the first period. 
Hence Corollary~\ref{bper} shows that 
\[\frac{\partial}{\partial z^i}\tau_{jk}=-2\pi i\oint_{\hat{b}_i}\oint_{\hat{b}_j}\oint_{\hat{b}_k}\omega_{0,3}\]
and more generally 
\[\frac{\partial^{n-2}}{\partial z^{i_1}..\partial z^{i_{n-2}}}\tau_{i_{n-1}i_n}=-2\pi i\oint_{\hat{b}_{i_1}}\oint_{\hat{b}_{i_2}}...\oint_{\hat{b}_{i_n}}\omega_{0,|I|}\] 
which proves Corollary~\ref{om3cor} and generalises the result in \cite{BHuSpe} to $\Sigma\subset X$ for any foliated symplectic surface $(X,\Omega,\cf)$.

Given a normalised holomorphic differential such as $\omega_i$, its cohomology class $[\omega_i]$ gives rise to a vector field on $\cb$.  It corresponds to the vector field $\frac{\partial}{\partial z^i}$ with respect to the coordinates $z^1,...,z^g$.  This maps to a vector field, $[\omega_i]=\iota^*[\omega_i]$ which maintains the same name by abuse of notation, on the formal neighbourhood $\widehat{\cb}_{[\Sigma]}$.  It is simply given by the Taylor expansion of $[\omega_i]$ at $[\Sigma]$.  Above $[\omega_i]$ is a vector field $\widehat{\omega}_i=D\theta\left(\frac{\partial}{\partial z^i}\right)$ on $\cl_{\text{KS}}$: 
\begin{equation}  \label{holdifser}
\widehat{\omega}_i=\omega_i-z^j\oint_{\hat{b}_i}\oint_{\hat{b}_j}\omega_{0,3}-\frac{1}{2}z^jz^k\oint_{\hat{b}_i}\oint_{\hat{b}_j}\oint_{\hat{b}_k}\omega_{0,4}-...
\end{equation} 
Again the series for  $[\widehat{\omega}_i]=[\omega_i]\in\bc^{2g} \llbracket z^1,...,z^g \rrbracket $ is analytic in $z^1,...,z^g$ in contrast to the formal series for $\widehat{\omega}_i$.  It gives the analytic expansion of a holomorphic section of the bundle   
\[
\begin{tikzcd}
\ch\arrow{d}\\ \cb\arrow[bend left]{u}{[\widehat{\omega_i}]}
 \end{tikzcd}
 \]
which takes the cohomology class of the holomorphic differential over each $[\Sigma]\in\cb$ normalised to have constant $a$-periods.  The analytic expansion is:
\[\oint_{b_i}\widehat{\omega_j}=-\sum_{I}\frac{z^I}{|I|!}\oint_{b_i}\oint_{\hat{b}_j}\oint_{\hat{b}_I}\omega_{0,|I|+2}=\tau_{ij}-z^k\oint_{b_i}\oint_{\hat{b}_j}\oint_{\hat{b}_k}\omega_{0,3}-...
\]
where $\oint_{\hat{b}_I}:=\oint_{\hat{b}_{i_1}}...\oint_{\hat{b}_{i_n}}$ for $I=(i_1,...,i_n)$.

For any $m\geq 0$, the series \[T_m(z^1,...,z^g)=\displaystyle\sum_{I}\frac{z^I}{|I|!}\oint_{\hat{b}_I}\omega_{0,|I|+m}\]
is defined in the $k$th formal neighbourhood of $[\Sigma]\in\cb$.  Its cohomology class is denoted $[T]$.  We summarise the geometric meaning of $T_m$ for small values of $m$ here:
\[
\begin{array}{rcll}
F_0&=& \displaystyle -\sum_{I}\frac{z^I}{|I|!}\oint_{\hat{b}_I}\omega_{0,|I|},&\text{Prepotential}\\
\theta&=& \displaystyle -\sum_{I}\frac{z^I}{|I|!}\oint_{\hat{b}_I}\omega_{0,|I|+1},&[\theta]=\text{Cohomology class defined in \eqref{cohtheta}}\\
\widehat{\mathcal{T}}&=& \displaystyle \sum_{I}\frac{z^I}{|I|!}\oint_{\hat{b}_I}\omega_{0,|I|+2},&[\widehat{\mathcal{T}}]=\tau_{ij}\\
\widehat{A}&=&\displaystyle\sum_{I}\frac{z^I}{|I|!}\oint_{\hat{b}_I}\omega_{0,|I|+3},&[\widehat{A}]=\text{Donagi-Markman cubic.}
 \end{array}
 \]

\subsubsection{The connection $\nabla^\cf$.}
Given $\Sigma\subset (X,\Omega,\cf)$ and $\cb$ the deformation space of $\Sigma$ in $X$, let $Z\stackrel{\pi}{\to}\cb$ be the universal family of curves in $\cb$, which comes with a natural map $Z\to X$ which induces the map $\Sigma\to X$ on each fibre of $Z$ over $[\Sigma]\in\cb$.  The fibres of $Z\to\cb$ induce a one-dimensional foliation which we call vertical.

The codimension one foliation $\cf$ on $X$ induces a codimension one foliation $H$ on $Z$ denoted by $H_z\subset T_zZ$ for $z\in Z$.   It satisfies
\begin{equation}  \label{horcon}
T_zZ\cong T_z\Sigma\oplus H_z,\qquad z\in Z-Z_R,
\end{equation} 
where $[\Sigma]=\pi(z)$ and $Z_R\subset Z$ is the codimension one set where the foliation intersects the vertical foliation non-transversally. On $Z^*=Z-Z_R$, $H_z\cong T_{[\Sigma]}\cb$ and \eqref{horcon} defines a horizontal lift of $T_{[\Sigma]}\cb$ to $T_zZ$.
\begin{definition}
Define a connection $\nabla^\cf$ on $Z^*\to\cb$ by the splitting \eqref{horcon}.
\end{definition}
The connection $\nabla^\cf$ is flat since leaves of the foliation $H$ give local flat sections.  This connection appears in many places, often implicitly, for families of varieties such as Hurwitz spaces \cite{DubGeo,DNOPSPri}, Seiberg-Witten families of curves \cite{NOkSei}, the Rauch variational formula in Appendix~\ref{variation} and in the work of Eynard and Orantin \cite{eynard_orantin,KSoAir,}. 
The connection lifts any vector in $T_{[\Sigma]}\cb$ to a vector in $T_zZ$ for $z\in Z^*$ which is used to take a Lie derivative of any tensor defined on $Z$ such as a locally defined relative differential $\eta$.  We can allow $\eta$ to be meromorphic.  This can be achieved either by considering $\nabla^\cf\eta$ locally on patches where $\eta$ is holomorphic and gluing, or by replacing $\eta$ with $\eta\cdot(u-\lambda)^m$, where $u$ is a locally defined function on $X$ that defines the foliation, and $\lambda$ is a function on $\cb$, chosen so that $\eta\cdot(u-\lambda)^m$ is holomorphic for $m$ large enough.  Then 
\[\nabla_v^\cf\big(\eta\cdot(u-\lambda)^m\big)=\nabla_v^\cf(\eta)\cdot(u-\lambda)^m-hm(u-\lambda)^{m-1}v\cdot\lambda\] 
which defines $\nabla_v^\cf\eta$ in terms of the covariant derivative of local holomorphic functions.  The covariant derivative $\nabla_v^\cf\eta$ naturally has poles, so the construction above allows one to take multiple covariant derivatives.   In particular it defines a covariant derivative on sections of the bundle $\bw=\cb\times W\to\cb$.  For an open neighbourhood $U_\Sigma\subset\cb$ of $[\Sigma]\in\cb$, and for each $v\in T_{[\Sigma]}\cb$ 
\[\nabla^\cf_v:\Gamma(U_\Sigma,\bw)\to\Gamma(U_\Sigma,\bw),
\]
which leaves $\Gamma(U_\Sigma,\bg)\subset\Gamma(U_\Sigma,\bw)$ invariant.

For any closed contour $\gamma\subset\Sigma_0$,
\begin{equation}  \label{concont}
\frac{\partial}{\partial z^i}\oint_\gamma\eta=\oint_\gamma\nabla^\cf_i\eta,
\end{equation} 
where $\nabla^\cf_i=\nabla^\cf_{\frac{\partial}{\partial z^i}}$.   To prove \eqref{concont}, define a local coordinate $x$ on $Z$ chosen so that $x=$ constant defines the foliation $H$ induced by $\cf$.  Express $\eta$ in terms of the local coordinate $x$ and depending on parameters $z^i$, differentiate under the integral sign, since the contour is compact, and use $\nabla^\cf u=0$.

In particular, for $\eta\in\Gamma(U_\Sigma,\bg)$, its cohomology class $[\eta]\in\Gamma(U_\Sigma,\ch)$, is determined by its periods hence \eqref{concont} implies that $\nabla^\cf$ lives above the Gauss-Manin connection:
\begin{equation}  \label{connFGM}
[\nabla^\cf\eta]=\nabla^{\text{GM}}[\eta].
\end{equation}
This restricts to formal neighbourhoods to give
\[\left[\nabla^\cf\widehat{\omega}_j\right]=\nabla^{\text{GM}}\left[\omega_j\right]
\]
for $\widehat{\omega}_j$ defined in \eqref{holdifser}.  It is shown in Section~\ref{geomA} that the covariant derivative $\nabla_i^\cf\widehat{\omega}_j$ gives rise to the tensor $A_\Sigma\in V_\Sigma\otimes V_\Sigma\otimes V_\Sigma$, and since $\nabla_i^{GM}\left[\omega_j\right]$ gives rise to the tensor $\bar{A}_\Sigma\in\overline{V}_\Sigma\otimes \overline{V}_\Sigma\otimes \overline{V}_\Sigma$, the compatibility of the covariant derivatives of vector fields is used to
prove that $A_\Sigma\mapsto\bar{A}_\Sigma$ under the map $V_\Sigma\to\overline{V}_\Sigma$.  This is proven in Proposition~\ref{vfs}.

Note that parallel transport, hence a flat frame, on $Z$ (or $\bw$ or $\bg$) for $\nabla^\cf$ does not exist in general due to the non-existence of solutions to ODEs at points where the foliation does not meet the vertical fibres transversally.  However, it does exist on any formal neighbourhood of a point $[\Sigma]\in\cb$.  
\begin{example}
Define a foliated surface locally by the family parametrised by  $z$
\[x=y^2+z.\]
Leaves of the foliation $x=\mathrm{constant}$ defines a flat connection on the fibration defined by the family.  Consider parallel transport from a general fibre to the fibre over $z=0$ given by $x=y_0^2$.  We have
\[y(y_0)=\sqrt{y_0^2-z}=y_0\sqrt{1-z/y_0^2}=y_0\left(1-\frac{z}{2y_0^2}-\frac18\frac{z^2}{y_0^4}-...\right)\]
exists analytically only when $|y_0|>|z|$ whereas it exists in $\bc[z]/z^n$ for any $n$.  For example,
in $\bc[z]/z^2$, $y(y_0)=y_0-\dfrac{z}{2y_0}$ which defines a path
\begin{align*}
\bc-\{0\}&\to\bc\\
y_0&\mapsto y_0-\frac{z}{2y_0}
\end{align*}
giving parallel transport above the first formal neighbourhood.  
\end{example}

\subsection{Formal and convergent series}  

The series given in  \eqref{thetaTR} and \eqref{holdifser} are induced via the natural map between a formal neighbourhood $\widehat{\cb}_{[\Sigma]}$ and an actual neighbourhood $U_\Sigma\subset\cb$ of $[\Sigma]\in\cb$
\[\widehat{\cb}_{[\Sigma]}\stackrel{\iota}{\longrightarrow} U_\Sigma\to\cb.
\]
The restriction $\iota^*$ sends locally defined functions to formal series.  It naturally extends from locally defined functions to locally defined sections such as (relative) meromorphic differentials.  
On $\bg\to\cb$, the holomorphic bundle with fibre $G_\Sigma\subset W$ over $[\Sigma]\in\cb$, it defines  
\[\iota^*:\Gamma(U_\Sigma, \bg)\to\Gamma(\widehat{\cb}_{[\Sigma]}, \bg)\]
which associates to any local section of the bundle $\bg\to\cb$ a section of the bundle $\bg$ defined in the $k$th formal neighbourhood of $[\Sigma]\in\cb$ for each $k$.   This map depends on the choice of connection $\nabla^\cf$.  It expresses a section of $\bg$ in terms of a flat frame for $\bg$ over the $k$th formal neighbourhood at $[\Sigma]$, for any $k$. 
\begin{lemma}
For $\eta\in\Gamma(U_\Sigma, \bg)$, with respect to local coordinates $\{z^1,...,z^g\}$ defined on $U_\Sigma\subset\cb$
\begin{equation}  \label{expansion}
  \iota^*\eta=\sum_{I}\frac{z^I}{|I|!}\left(\nabla^\cf_I\eta\right)|_\Sigma,
\end{equation}
on the $k$th formal neighbourhood of $[\Sigma]\in\cb$ for each $k$.  The sum is over tuples of positive integers $I=(i_1,...,i_n)\in\{1,...,g\}^n$, $z^I=\prod z^{i_k}$, $|I|=\sum i_k$, $\nabla^\cf_i=\nabla^\cf_{\partial/\partial z^i}$ and $\nabla^\cf_I=\nabla^\cf_{i_1}\cdots\nabla^\cf_{i_n}$.   
\end{lemma}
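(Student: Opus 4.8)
The statement is the Taylor expansion of $\eta$ in the flat frame for $\nabla^\cf$, so the plan is to reduce it to the ordinary Taylor expansion of a vector-valued formal power series and then identify the Taylor coefficients with the iterated covariant derivatives. Since $\nabla^\cf$ is flat and, as illustrated in the example above, admits well-defined parallel transport on the formal neighbourhood $\widehat{\cb}_{[\Sigma]}$, parallel transport back to the fibre over $[\Sigma]$ trivialises $\bg$ over the $k$th formal neighbourhood, for each $k$:
\[
\Phi:\bg|_{\widehat{\cb}_{[\Sigma]}}\stackrel{\cong}{\longrightarrow}\widehat{\cb}_{[\Sigma]}\times G_\Sigma.
\]
By construction $\Phi$ carries $\nabla^\cf$ to the trivial connection $d$ on the $G_\Sigma$-valued side and restricts to the identity on the fibre over $[\Sigma]$; this is precisely the flat frame in terms of which $\iota^*$ was defined.

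First I would push the section into this frame, setting $\tilde\eta:=\Phi\circ\iota^*\eta\in G_\Sigma\llbracket z^1,\dots,z^g\rrbracket$. Because $\Phi$ intertwines $\nabla^\cf$ with $d$, we have $\Phi\circ\nabla^\cf_I\eta=\partial_I\tilde\eta$ with $\partial_I=\partial_{i_1}\cdots\partial_{i_n}$; flatness guarantees that the iterated covariant derivatives commute, matching the symmetry of $\partial_I$, and the invariance of $\Gamma(U_\Sigma,\bg)$ under $\nabla^\cf$ noted before \eqref{concont} ensures that each $\nabla^\cf_I\eta$ is again a section of $\bg$. Evaluating at $z=0$, where $\Phi$ is the identity, yields $\partial_I\tilde\eta|_0=(\nabla^\cf_I\eta)|_\Sigma$. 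Next I would apply the ordinary Taylor expansion to the $G_\Sigma$-valued formal power series $\tilde\eta$,
\[
\tilde\eta=\sum_I\frac{z^I}{|I|!}\,\partial_I\tilde\eta|_0,
\]
where summing over ordered tuples $I$ with the weight $1/|I|!$ reproduces the standard multi-index Taylor coefficients after collecting equal monomials. Substituting the coefficient identification and applying $\Phi^{-1}$ gives $\iota^*\eta=\sum_I\frac{z^I}{|I|!}(\nabla^\cf_I\eta)|_\Sigma$, as claimed.

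The one point requiring genuine care --- and the step I expect to be the main obstacle --- is that $G_\Sigma$ is infinite dimensional, which a priori threatens both the existence of $\Phi$ and the meaning of the expansion. This is controlled by the pole-order filtration $W_1\subset W_2\subset\cdots$ with each $G_\Sigma\cap W_k$ finite dimensional: every element of $G_\Sigma$ is a single residueless meromorphic differential of finite pole order, and a single covariant derivative raises the pole order by at most a bounded amount, as in the construction of $\nabla^\cf$ on meromorphic differentials preceding \eqref{concont}. Hence for each fixed $I$ the vector $(\nabla^\cf_I\eta)|_\Sigma$ lies in a finite dimensional graded piece, and reducing modulo $\mathfrak{m}^{k+1}$ confines the entire computation to finitely many such pieces. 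Parallel transport, the flat frame $\Phi$, and the Taylor argument therefore all take place in a finite rank bundle at each order $k$, and the formula on $\widehat{\cb}_{[\Sigma]}$ follows by passing to the inverse limit over $k$.
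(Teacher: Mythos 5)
Your proof is correct, but it is organised differently from the paper's. You work entirely on the bundle $\bg\to\cb$: you invoke parallel transport for $\nabla^\cf$ over the formal neighbourhood to build a flat trivialisation $\Phi$, observe that this is the frame in which $\iota^*$ is defined, and then reduce \eqref{expansion} to the standard identification of the Taylor coefficients of a section in a flat frame with its iterated covariant derivatives at the centre, taking care of the infinite rank of $\bg$ via the pole-order filtration $W_1\subset W_2\subset\cdots$ (each $\nabla^\cf_i$ raises the pole order at $R$ by a bounded amount, so everything modulo $\mathfrak{m}^{k+1}$ happens in a finite-dimensional piece). The paper instead descends to the universal family $Z\to\cb$: it chooses local coordinates $\{u,z^1,\dots,z^g\}$ on $Z$ with $u$ inducing the foliation, so that $\nabla^\cf_i$ is literally the coordinate vector field $\partial/\partial z^i$ away from the tangency locus; the formula is then the ordinary multivariable Taylor expansion of a function $h\in\co_Z(U_Z)$ along the fibre, and the case of $\eta\in\Gamma(U_\Sigma,\bg)$ follows because $\bg$ is the push-forward of relative differentials. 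What the paper's route buys is an explicit explanation of \emph{why} the covariant derivative appears and how the right-hand side of \eqref{expansion} depends on the choice of connection (a coordinate $f(u,z^1,\dots,z^g)$ not adapted to the foliation would change the expansion); what your route buys is a cleaner separation of the formal-geometry content (covariant Taylor expansion in a flat frame) from the analytic input, together with a more explicit treatment of the convergence/finite-rank issue, which the paper handles only implicitly through its construction of $\nabla^\cf$ on meromorphic differentials. Both arguments rely on the same two facts — flatness of $\nabla^\cf$ and the existence of parallel transport on formal neighbourhoods — so neither has a gap the other avoids.
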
 
\begin{proof}
The formula \eqref{expansion} is essentially a Taylor series for $\eta$.  We need to explain the appearance of the covariant derivative.

Consider the universal family $\pi:Z\to\cb$ and an open neighbourhood $V_Z\subset\pi^{-1}(U_\Sigma)$.  Then $\co_Z(V_Z)$ is a module over $\co_\cb(U_\Sigma)$. For any locally defined function $h\in\co_Z(U_Z)$, the restriction of $h$ to a formal neighbourhood of the fibre $\Sigma$ of $Z$ above $[\Sigma]\in\cb$ is defined by  
\begin{equation}  \label{expansionh}
  \iota^*h=\sum_{I}\frac{z^I}{|I|!}\left(\nabla^\cf_Ih\right)|_\Sigma.
\end{equation}
The functions $z^i\in\co_\cb(U_\Sigma)$ pull back to functions $z^i\in\co_Z(V_Z)$, with the same name by abuse of notation.  The Taylor series for $h\in\co_Z(U_Z)$ would normally use partial derivatives with respect to the vector fields $\frac{\partial}{\partial z^i}$, but these are not yet defined on $Z$ until a full system of coordinates is defined. Choose a locally defined function $u$ on $Z$ which induces the foliation on $Z$.  The collection $\{u,z^1,...,z^g\}$ defines local coordinates on $Z$ (when the foliation on $Z$ meets the fibre transversally).  The coordinates give rise to well-defined vector fields $\frac{\partial}{\partial z^i}$ on $Z$ (given the same name as vector fields on $\cb$) which allow one to write out a Taylor series with $\nabla^\cf_i$ replaced by $\frac{\partial}{\partial z^i}$ in \eqref{expansionh}.    The vector fields $\frac{\partial}{\partial z^i}$ are independent of a change of coordinates $\{u,z^1,...,z^g\}\mapsto\{f(u),z^1,...,z^g\}$ hence the Taylor series depends only on the foliation $\cf$ (which induces the folation on $Z$).  The use of the covariant derivative in \eqref{expansionh} to signify the choice of local coordinate $u$ is natural since $\nabla^\cf_iu=0$ agrees with the definition of the vector fields from coordinates via $\frac{\partial}{\partial z^i}u$.  Note that a more general change of coordinates $\{u,z^1,...,z^g\}\mapsto\{f(u,z^1,...,z^g),z^1,...,z^g\}$ which is equivalent to a different choice of connection unrelated to the foliation, leads to a different right hand side in \eqref{expansionh}, where the relation between the two different series is achieved via $z^i$-dependent coefficients in \eqref{expansionh}

The formula \eqref{expansion} follows from \eqref{expansionh} since $\bg$ is the push-forward of the sheaf of relative differentials on $Z$, hence $\eta\in\Gamma(U_\Sigma, \bg)$ is built locally from  $h\in\co_Z(U_Z)$.
\end{proof}


On the universal family $Z\to\cb$, let $h\in\co_Z(U_Z)$ be a locally defined function on an open set $U_Z\subset Z$.  Its image in a formal neighbourhood of a fibre $\Sigma$ uses the same formula as \eqref{expansion}---the difference between functions and differentials is minor since $hdu$ is a locally defined differential and $\nabla^\cf du=0$.  

The ring homomorphism $\iota^*$ in \eqref{expansionh} satisfies $\iota^*{1}=1$ which is visible on the right hand side of \eqref{expansionh} since $\nabla^\cf1=0$.  The equality $\iota^*(h_1h_2)=\iota^*(h_1)\iota^*(h_2)$ is the combinatorial identity
\[\sum_{I}\frac{z^I}{|I|!}\nabla^\cf_I(h_1h_2)|_\Sigma=\sum_{I}\frac{z^I}{|I|!}\nabla^\cf_I(h_1)|_\Sigma\sum_{I}\frac{z^I}{|I|!}\nabla^\cf_I(h_2)|_\Sigma\]
which follows from Leibniz' formula applied to $\nabla^\cf$.  It formally coincides with the identity showing that the Taylor expansion in several variables of a product of two functions is the product of the two Taylor expansions.  

Covariant differentiation $\nabla^\cf_i$ on the formal neighbourhood $\widehat{\cb}_{[\Sigma]}$ is simply given by $\frac{\partial}{\partial z^i}$ for each $i=1,...,g$.  The map $\iota^*$ commutes with $\nabla^\cf$:
\[\nabla^\cf\circ\iota^*=\iota^*\circ\nabla^\cf.\]
The proof that $\frac{\partial}{\partial z^i}\iota^*(h)=\iota^*(\nabla_i^\cf h)$ is combinatoric and formally coincides with differentiation of a Taylor expansion in several variables.

\begin{proposition}   \label{thetader}
The section $\theta\in\Gamma(\widehat{\cb}_{[\Sigma]},G_\Sigma)$ defined in \eqref{thetaTR} satisfies the following relation with respect to local FD coordinates $(u,v)$ on $X$:
\begin{equation}  \label{eq:thetader}
    \iota^*(vdu)=\sum_{|I|\geq0}\frac{z^I}{|I|!}\nabla^\cf_I(vdu)|_\Sigma=vdu|_\Sigma-\theta.
\end{equation}
\end{proposition}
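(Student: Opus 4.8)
The plan is to prove the two equalities in \eqref{eq:thetader} separately. The first equality is nothing but the Taylor expansion formula \eqref{expansion} applied to the locally defined differential $vdu$: writing it locally as $v\,du$ and using that $\iota^*$ extends from functions to relative differentials with $\nabla^\cf du=0$, the formula \eqref{expansion} gives $\iota^*(vdu)=\sum_{|I|\geq0}\frac{z^I}{|I|!}\left(\nabla^\cf_I(vdu)\right)|_\Sigma$ with no further work. All the content therefore sits in the second equality, and it suffices to identify coefficients. By \eqref{thetaTR} (equivalently the $T_1$ entry of the table, $\theta=-\sum_I\frac{z^I}{|I|!}\oint_{\hat b_I}\omega_{0,|I|+1}$), the coefficient of $\frac{z^I}{|I|!}$ in $vdu|_\Sigma-\theta$ is $vdu|_\Sigma$ for $|I|=0$ and $\oint_{\hat b_I}\omega_{0,|I|+1}$ for $|I|\geq1$. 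Since the $|I|=0$ coefficient of $\iota^*(vdu)$ is also $vdu|_\Sigma$, the whole proposition reduces to the single identity
\[
\left(\nabla^\cf_{i_1}\cdots\nabla^\cf_{i_n}(vdu)\right)|_\Sigma=\oint_{\hat b_{i_1}}\cdots\oint_{\hat b_{i_n}}\omega_{0,n+1},\qquad n\geq1.
\]

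I would establish this by induction on $n$, the inductive engine being the variational formula for the correlators proved in Appendix~\ref{variation}, in the form $\nabla^\cf_j\,\omega_{0,m}=\oint_{\hat b_j}\omega_{0,m+1}$, together with the commutation rule \eqref{concont}, which lets $\nabla^\cf_j$ pass through each period integral $\oint_{\hat b_k}$. The base case $n=1$ is the same variational formula with $vdu$ in the role of $\omega_{0,1}$: it gives $\nabla^\cf_i(vdu)=\oint_{\hat b_i}\omega_{0,2}=\oint_{\hat b_i}B=-\omega_i$, where the last step uses the standard fact that the $\hat b_i$-period of the Bergman kernel \eqref{bidiff} is $-\omega_i$. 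For the inductive step I would rewrite $\nabla^\cf_{i_1}\cdots\nabla^\cf_{i_n}(vdu)=\nabla^\cf_{i_1}\big(\oint_{\hat b_{i_2}}\cdots\oint_{\hat b_{i_n}}\omega_{0,n}\big)$ using the inductive hypothesis, move $\nabla^\cf_{i_1}$ inside the integrals by \eqref{concont}, and apply the variational formula once more to produce $\oint_{\hat b_{i_1}}\cdots\oint_{\hat b_{i_n}}\omega_{0,n+1}$.

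As a consistency check on the base case and the sign conventions, I would also verify the cohomology class directly via periods. By \eqref{concont} and the fact that the $a$- and $b$-periods of $[\theta]$ are $z^i$ and $\w_i$, one finds $\oint_{a_j}\nabla^\cf_i(vdu)=\frac{\partial}{\partial z^i}\oint_{a_j}vdu=-\delta_{ij}$ and $\oint_{b_j}\nabla^\cf_i(vdu)=\frac{\partial}{\partial z^i}\oint_{b_j}vdu=-\tau_{ij}$, using $\tau_{ij}=\frac{\partial}{\partial z^i}\w_j$; these are exactly the periods of $-\omega_i$, so $\nabla^\cf_i(vdu)|_\Sigma$ and $-\omega_i$ share a cohomology class, matching \eqref{connFGM}.

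The hard part, where the geometry genuinely enters, is upgrading this to an equality of differentials in the base case, i.e. showing that $\nabla^\cf_i(vdu)|_\Sigma$ is the \emph{holomorphic} differential $-\omega_i$ rather than merely cohomologous to it. Since $\nabla^\cf$ introduces poles, one must check that the apparent pole of $\nabla^\cf_i(vdu)$ at each $\alpha\in R$ cancels. In the local coordinate $v$ adapted to $\alpha$ as in Definition~\ref{specoord}, the derivative of $v$ along a leaf $u=\mathrm{const}$ acquires a simple pole at the ramification point, but it is multiplied by $du$, which vanishes there to first order, so the product is holomorphic; combined with the period computation above this forces $\nabla^\cf_i(vdu)|_\Sigma=-\omega_i$. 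This Rauch-type cancellation is precisely the computation carried out in Appendix~\ref{variation}, and once the base case is secured the higher-order identities follow formally from the induction, completing the proof.
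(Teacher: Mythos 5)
Your proof is correct and follows essentially the same route as the paper: the first equality is the expansion formula \eqref{expansion}, and the second reduces to $\nabla^\cf_i(vdu)|_\Sigma=-\omega_i$ followed by repeated application of the variational formula \eqref{EOvar} (your induction on $|I|$ is precisely the paper's computation that $\nabla^\cf_I\omega_i=-\oint_{\hat b_i}\oint_{\hat b_I}\omega_{0,|I|+2}$, hence $\xi=\theta$). One small caveat: \eqref{EOvar} is established only for $n>0$ and $\omega_{0,1}$ is not defined in this paper, so the base case cannot literally be obtained ``with $vdu$ in the role of $\omega_{0,1}$''---but you recognise this and supply the correct independent argument (pole cancellation at $R$ via the local coordinates of Definition~\ref{specoord} together with period matching), which is in fact more justification than the paper offers for its assertion \eqref{holder}.
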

\begin{proof}
Given local FD coordinates $(u,v)$ around $\alpha\in R\subset\Sigma\subset X$, the local differential $vdu$ pulls back to a local relative differential on the universal space $Z$.
Hence we can apply \eqref{expansion} to get the first equality in \eqref{eq:thetader}.  Write \eqref{eq:thetader} as 
\begin{equation}  \label{eq:thetader1}
\iota^*(vdu)=vdu|_\Sigma-\xi
\end{equation}
so it remains to prove that  $\xi=\theta$.
Note that $\xi$ is invariant under a change of local FD coordinates $(u,v)\mapsto(f(u),v/f'(u)+g(u))$ since $\nabla^\cf_i(g(u)df(u))=0.$

Each holomorphic differential $\eta\in H^0(\Sigma,K_\Sigma)$ extends to a family of normalised holomorphic differentials $\widetilde{\eta}([\Sigma'])\in H^0(\Sigma', K_{\Sigma'})$ for $[\Sigma']\in U_\Sigma$ by requiring that the $a$-periods are constant, for example $\oint_{a_j}\widetilde{\omega}_i=\delta_{ij}$, $i,j=1,...,g$.  We write
\[\nabla_I^\cf\eta:=\left(\nabla_I^\cf\widetilde{\eta}\right)|_\Sigma\in H^0(K_\Sigma(mR)),\quad m=2|I|
\]
where, as usual, $\nabla_I^\cf=\nabla_{i_1}^\cf...\nabla_{i_n}^\cf$. 

By definition, although $vdu$ is locally defined on the fibre $\Sigma$, its covariant derivative is globally defined:
\begin{equation}  \label{holder}
\nabla^\cf_i(vdu)=-\widetilde{\omega}_i.
\end{equation} 
From \eqref{eq:thetader1} and \eqref{holder} we have
\[\xi=z^i\omega_i+\frac12z^iz^j\nabla^\cf_i\omega_j+\frac{1}{3!}z^iz^jz^k\nabla^\cf_i\nabla^\cf_j(\omega_k)+...
\]
where $\nabla^\cf_I\omega_k\in H^0(K_\Sigma(mR))$ for $m=2|I|$. 

We have $\oint_{\hat{b}_i}\omega_{0,2}=-\omega_i$, and the correlators satisfy the following variational formula due to Eynard and Orantin \cite{eynard_orantin}:
\begin{equation}   \label{EOvar}
  \nabla^{\mathcal{F}}_i\omega_{h,n}(p_1,\cdots,p_n) = \oint_{p_{n+1}\in \hat{b}_i}\omega_{h,n+1}(p_1,\cdots,p_n,p_{n+1}),\quad n>0.
\end{equation}
The formula \eqref{EOvar} is proven in Appendix~\ref{variation} inductively.  Applied to $h=0$, we have
$$\nabla_i^\cf\omega_{0,n}=\oint_{\hat{b}_i}\omega_{0,n+1},\quad n\geq 2
$$
hence
$$\nabla_I^\cf\omega_i=-\oint_{\hat{b}_i}\oint_{\hat{b}_I}\omega_{0,|I|+2}.
$$
Thus $\xi=\theta$ and the proposition is proven.
\end{proof}

\begin{proposition}   \label{phisec1}
The section $\theta\in\Gamma(\widehat{\cb}_{[\Sigma]},G_\Sigma)$ defines a map
$$\theta:\widehat{\cb}_{[\Sigma]}\to\cl_{\text{KS}},
$$
where $\cl_{\text{KS}}\subset W$ is the quadratic Lagrangian defined in \eqref{rescon1} and \eqref{rescon2}.
\end{proposition}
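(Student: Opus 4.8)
The plan is to reduce the two infinite families of residue constraints \eqref{rescon1} and \eqref{rescon2} that cut out $\cl_{\text{KS}}$ to residues of a single differential, and then to evaluate those residues using only the local $\sigma_\alpha$-symmetry of the foliation connection. By Proposition~\ref{thetader} we have $\theta=vdu|_\Sigma-\iota^*(vdu)$. Writing $\Theta:=\iota^*(vdu)$ and using that $vdu|_\Sigma/du_\alpha=v_\alpha$ on the central fibre, the quantity entering the constraints simplifies to
\[
v_\alpha-\frac{\theta}{du_\alpha}=\frac{vdu|_\Sigma-\theta}{du_\alpha}=\frac{\Theta}{du_\alpha}.
\]
Because $\iota^*$ is a ring homomorphism with $\iota^*u_\alpha=u_\alpha$ and $\iota^*du_\alpha=du_\alpha$ (as $\nabla^\cf u_\alpha=0$), this promotes to
\[
\Big(v_\alpha-\tfrac{\theta}{du_\alpha}\Big)u_\alpha^m du_\alpha=\iota^*\big(v_\alpha u_\alpha^m du_\alpha\big),\qquad
\Big(v_\alpha-\tfrac{\theta}{du_\alpha}\Big)^2 u_\alpha^m du_\alpha=\frac{\Theta^2}{du_\alpha}u_\alpha^m=\iota^*\big(v_\alpha^2 u_\alpha^m du_\alpha\big).
\]
Thus \eqref{rescon1} and \eqref{rescon2} amount precisely to the vanishing of $\Res_\alpha\iota^*(v_\alpha^k u_\alpha^m du_\alpha)$ for $k=1,2$, and the whole statement reduces to computing these residues at each $\alpha\in R$ (the constraints at distinct points being independent).

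To evaluate them I would split $v_\alpha^k du_\alpha$ into its $\sigma_\alpha$-invariant and $\sigma_\alpha$-anti-invariant parts. Since invariant functions on the local double cover $u_\alpha\colon\Sigma\to\bc$ descend to the base, the invariant part of $v_\alpha^k$ is a function of $u_\alpha$, so $v_\alpha^k du_\alpha=P_k(u_\alpha)\,du_\alpha+\xi_k$ with $P_k$ holomorphic in $u_\alpha$ near $\alpha$ and $\xi_k$ skew-invariant. The pivotal structural input is that $\nabla^\cf$, hence $\iota^*$, commutes with $\sigma_\alpha^*$ (the foliation $u_\alpha=\text{const}$ is manifestly $\sigma_\alpha$-symmetric), so $\iota^*$ preserves this decomposition. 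A short computation with the horizontal lift $\tilde\partial_i=\partial_{z^i}-\frac{\partial_{z^i}u_\alpha}{\partial_{v_\alpha}u_\alpha}\partial_{v_\alpha}$ then shows that on a function of $u_\alpha$ the pole of the coefficient at $\alpha$ is cancelled (since $\partial_{v_\alpha}$ of a function of $u_\alpha$ carries a compensating factor $\partial_{v_\alpha}u_\alpha$), so $\nabla^\cf_i$ again returns a function of $u_\alpha$, holomorphic near $\alpha$. Consequently $\iota^*(P_k(u_\alpha)du_\alpha)$ is again of the form (holomorphic function of $u_\alpha)\cdot du_\alpha$, an exact differential in $u_\alpha$, so $\Res_\alpha\iota^*(P_k(u_\alpha)du_\alpha)u_\alpha^m=0$; while $\iota^*(\xi_k)$ remains skew-invariant, whence $\iota^*(\xi_k)u_\alpha^m$ is residueless by the skew-invariance criterion recalled below \eqref{rescon2}. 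Adding the two contributions yields both \eqref{rescon1} and \eqref{rescon2}.

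The hard part is the quadratic constraint \eqref{rescon2}. For \eqref{rescon1} one needs only that $\Theta$ has skew-invariant principal part, which is immediate from the skew-invariance of $vdu|_\Sigma$ and of the correlators $\omega_{0,n}$; but \eqref{rescon2} genuinely requires controlling the $\sigma_\alpha$-invariant part of $\iota^*(v_\alpha^2 du_\alpha)$. The delicate point is exactly the lemma that $\nabla^\cf$ creates no $u_\alpha$-pole when applied to a function of $u_\alpha$, so that the invariant part stays of the form $P(u_\alpha)du_\alpha$ to all orders in the formal parameter; without it one would know only that the invariant part is some invariant differential, which may carry a residue. I would establish this lemma via the horizontal-lift computation above — equivalently, by noting that $\iota^*$ intertwines the involution $\sigma_\alpha$ on the central fibre with the involution on nearby fibres, so the $u_\alpha$-expansion of the invariant part of $v_\alpha^2$ is inherited order by order from the smooth fibres, where $v_\alpha^2$ is holomorphic. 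Finally I would verify the normalisations used in the reduction (that $vdu|_\Sigma/du_\alpha=v_\alpha$ and $\iota^*du_\alpha=du_\alpha$) against Definition~\ref{specoord} and Proposition~\ref{thetader}, and confirm the argument runs uniformly over all $\alpha\in R$.
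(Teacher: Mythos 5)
Your proposal is correct, and its first half --- rewriting $\bigl(v_\alpha-\theta/du_\alpha\bigr)^k u_\alpha^m du_\alpha$ as $\iota^*\bigl(v_\alpha^k u_\alpha^m du_\alpha\bigr)$ via Proposition~\ref{thetader}, the ring-homomorphism property of $\iota^*$ and $\nabla^\cf u_\alpha=0$ --- is exactly the reduction the paper performs. Where you diverge is in evaluating $\Res_\alpha\iota^*\bigl(v_\alpha^k u_\alpha^m du_\alpha\bigr)$. The paper does this in one line: the residue is a contour integral over a fixed small contour around $\alpha$, so by \eqref{concont} it commutes with $\nabla^\cf_I$, giving $\Res_\alpha\nabla^\cf_I\bigl(v_\alpha^k u_\alpha^m du_\alpha\bigr)=\partial_I\Res_\alpha\bigl(v_\alpha^k u_\alpha^m du_\alpha\bigr)$, and the latter vanishes identically because $v_\alpha^k u_\alpha^m du_\alpha$ is holomorphic near $\alpha$ on \emph{every} nearby fibre. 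On that route the quadratic constraint \eqref{rescon2} is no harder than the linear one \eqref{rescon1}; the ``hard part'' you single out is an artifact of your chosen method, not of the proposition. Your alternative --- splitting $v_\alpha^k du_\alpha$ fibrewise into $\sigma_\alpha$-invariant and anti-invariant pieces, noting that $\nabla^\cf$ commutes with $\sigma_\alpha^*$ (true, since $\sigma_\alpha$ preserves $u_\alpha$ and hence the horizontal distribution $\ker du_\alpha$ on the universal curve), and checking by the horizontal-lift computation that the invariant part stays of the form $(\text{holomorphic in }u_\alpha)\cdot du_\alpha$ --- is also valid, and it buys a genuine refinement: it shows that the $\sigma_\alpha$-invariant part of $\iota^*(v_\alpha du_\alpha)$ is exact in $u_\alpha$ to all orders, which is strictly stronger than the residue constraints themselves. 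The price is two auxiliary lemmas (equivariance of $\nabla^\cf$ under $\sigma_\alpha$, and the no-new-poles statement for the invariant part) that the paper's differentiation-under-the-residue argument never needs; if the only goal is Proposition~\ref{phisec1}, the paper's route is the economical one.
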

\begin{proof}
We need to check \eqref{rescon1} and \eqref{rescon2} for $\eta=\theta$ with respect to the FD coordinates $(u_\alpha,v_\alpha)$ from Definition~\ref{specoord}.
\begin{align*}
\Res_\alpha\left(v_\alpha-\frac{\theta}{du_\alpha}\right)u_\alpha^mdu_\alpha&=\Res\left(v_\alpha u_\alpha^mdu_\alpha-\theta u_\alpha^m\right)\\
&=\Res_\alpha\left(v_\alpha u_\alpha^mdu_\alpha+\sum_{|I|>0}\frac{z^I}{|I|!}u_\alpha^m\nabla^\cf_I(v_\alpha du_\alpha) \right)\\
&=\Res_\alpha\left(v_\alpha u_\alpha^mdu_\alpha+\sum_{|I|>0}\frac{z^I}{|I|!}\nabla^\cf_I(v_\alpha u_\alpha^mdu_\alpha) \right)\\
&=0
\end{align*}
for any $m\geq 0$ where the second equality uses \eqref{eq:thetader}.
 The final equality uses the fact that $v_\alpha u_\alpha^mdu_\alpha$ is holomorphic at $\alpha$ hence has zero residue.  Furthermore, it has zero residue in a neighbourhood so its higher derivatives also vanish to give
 \begin{equation} \label{vanres}
     \Res_\alpha\sum_{|I|>0}\frac{z^I}{|I|!}\nabla^\cf_I(v_\alpha u_\alpha^mdu_\alpha)=\sum_{|I|>0}\frac{z^I}{|I|!}\frac{\partial}{\partial z^I}\Res_\alpha(v_\alpha u_\alpha^mdu_\alpha)=0.
 \end{equation} 
Hence $\theta$ satisfies the first of the residue constraints \eqref{rescon1}.

Let $p$ be any locally analytic function.   A consequence of the ring homomorphism property is:
$$\iota^*p(v_\alpha)=p(\iota^*(v_\alpha))=p\left(v_\alpha-\frac{\theta}{du_\alpha}\right).
$$
In particular, 
\[\left(v_\alpha-\frac{\theta}{du_\alpha}\right)^2u_\alpha^mdu_\alpha=\iota^*(v_\alpha)^2u_\alpha^mdu_\alpha=\sum_{I}\frac{z^I}{|I|!}\nabla^\cf_I(v_\alpha^2 u_\alpha^mdu_\alpha)\]
and
$$
\Res_\alpha\left(v_\alpha-\frac{\theta}{du_\alpha}\right)^2u_\alpha^mdu_\alpha=\sum_{I}\frac{z^I}{|I|!}\frac{\partial}{\partial z^I}\Res_\alpha(y^2_\alpha u_\alpha^mdu_\alpha)=0
$$
since $v_\alpha^2 u_\alpha^mdu_\alpha$ is holomorphic in a neighbourhood of $\alpha\in\Sigma_0\subset X$.  Hence $\theta$ also satisfies the second of the residue constraints \eqref{rescon2} and the proposition is proven.
\end{proof}
\begin{remark}
In the proof of Proposition~\ref{phisec1} it is shown that $\theta$ satisfies
$$
\Res_\alpha\left(v_\alpha-\frac{\theta}{du_\alpha}\right)^ku_\alpha^mdu_\alpha=0
$$
for $k=1,2$ and for all $m\geq 0$.  The proof easily generalises to allow all $k\geq 1$.
\end{remark}
Proposition~\ref{phisec1} yields a collection of relations among periods and residues of $\omega_{h,n}$.  Here we list a few of them.  We have \[0=\Res_\alpha\left(v_\alpha-\frac{\theta}{du_\alpha}\right)u_\alpha^mdu_\alpha=-\Res_\alpha\theta u_\alpha^m=\sum_I\frac{z^I}{|I|!}\Res_\alpha u_\alpha^m\oint_{\hat{b}_I}\omega_{0,|I|+1} \]
which implies that the principal part of $\oint_{\hat{b}_I}\omega_{0,|I|+1}$ is skew-invariant under the local involution defined by the $\cf$.  The quadratic relation
\[0=\Res_\alpha\left(v_\alpha-\frac{\theta}{du_\alpha}\right)^2u_\alpha^mdu_\alpha=-2\Res_\alpha v_\alpha\theta u_\alpha^m+\frac{\theta\cdot\theta u_\alpha^m}{du_\alpha} \]
yields a sequence of relations.  When $m=0$, the first two relations are: 
\begin{equation}  \label{relat}
    \Res_\alpha\frac{\omega_i\omega_j}{du_\alpha}=-\Res_\alpha v_\alpha\oint_{\hat{b}_i}\oint_{\hat{b}_j}\omega_{0,3}
\end{equation}
and
\begin{align}  \label{relat2}
    \Res_\alpha\Big(\frac{\omega_i}{du_\alpha}\oint_{\hat{b}_j}\oint_{\hat{b}_k}\omega_{0,3}+\frac{\omega_j}{du_\alpha}\oint_{\hat{b}_k}\oint_{\hat{b}_i}\omega_{0,3}&+\frac{\omega_k}{du_\alpha}\oint_{\hat{b}_i}\oint_{\hat{b}_j}\omega_{0,3}\Big)\\
    &=-\frac13\Res_\alpha v_\alpha\oint_{\hat{b}_i}\oint_{\hat{b}_j}\oint_{\hat{b}_k}\omega_{0,4}.\nonumber
\end{align}

\begin{proposition}  \label{phisec2}
The cohomology class of $\theta\in\Gamma(\widehat{\cb}_{[\Sigma]},G_\Sigma)$ defined in \eqref{thetaTR} is the local section  $[\theta]\in \Gamma(U_\Sigma,\ch)$ defined in \eqref{cohtheta}. 
\end{proposition}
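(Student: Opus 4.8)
The plan is to characterise both sides by the same first-order equation on the formal neighbourhood $\widehat{\cb}_{[\Sigma]}$ together with a common initial condition, and then invoke uniqueness of flat sections. Recall that the analytic section of \eqref{cohtheta} is the unique section of $\ch$ satisfying $\nabla^{\text{GM}}[\theta]=\phi$ and $[\theta]([\Sigma])=0$; its pullback by $\iota$, which is the restriction appearing in \eqref{restheta}, therefore satisfies $\nabla^{\text{GM}}_i[\theta]=\phi(\partial/\partial z^i)$ and vanishes at the centre of the formal neighbourhood. Here $\phi(\partial/\partial z^i)=[\omega_i]$ is the class of the normalised holomorphic differential, since $\oint_{a_j}\nabla^{\text{GM}}_{\partial/\partial z^i}[\theta]=\partial z^j/\partial z^i=\delta_{ij}$ forces the holomorphic representative to have $a$-periods $\delta_{ij}$.

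I would then verify that the cohomology class of the formal series $\theta$ satisfies these same two conditions. The initial condition is immediate: the series \eqref{thetaTR} has no constant term, so $\theta|_{z=0}=0$ and its cohomology class vanishes at $[\Sigma]$, matching $[\theta]([\Sigma])=0$. For the covariant derivative, I would use that on $\widehat{\cb}_{[\Sigma]}$ the connection $\nabla^\cf_i$ acts as $\partial/\partial z^i$, so differentiating \eqref{thetaTR} term by term produces exactly the series $\widehat{\omega}_i$ of \eqref{holdifser}, i.e. $\nabla^\cf_i\theta=\widehat{\omega}_i$; equivalently this follows from Proposition~\ref{thetader} together with \eqref{holder} and the fact that $\iota^*$ commutes with $\nabla^\cf$. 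Applying the compatibility \eqref{connFGM} between $\nabla^\cf$ and the Gauss--Manin connection then gives $\nabla^{\text{GM}}_i[\theta]=[\nabla^\cf_i\theta]=[\widehat{\omega}_i]=[\omega_i]$, so the class of the formal series obeys $\nabla^{\text{GM}}_i(\cdot)=[\omega_i]=\phi(\partial/\partial z^i)$ as required.

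The proof concludes by uniqueness: the difference $d$ of the two classes is a section of the flat bundle $\ch$ over $\widehat{\cb}_{[\Sigma]}$ with $\nabla^{\text{GM}}_i d=0$ for all $i$ and $d(0)=0$, hence is constant in a flat frame and, vanishing at the centre, is identically zero. As a direct consistency check on the remaining periods, the correlators $\omega_{0,n}$ have vanishing $a$-periods, so every term of \eqref{thetaTR} beyond the linear one lies in $V_\Sigma$ and contributes nothing to an $a$-integral, giving $\oint_{a_i}\theta=z^i$ in agreement with the defining coordinates. The main point requiring care is the passage from the formal covariant derivative $\nabla^\cf_i\theta$ to $\nabla^{\text{GM}}_i[\theta]$ through \eqref{connFGM}, together with the identification $[\widehat{\omega}_i]=[\omega_i]=\phi(\partial/\partial z^i)$; once these are in place the remainder is the routine uniqueness statement for flat sections on a formal neighbourhood.
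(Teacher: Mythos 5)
Your proof is correct and rests on the same two ingredients as the paper's: the compatibility $[\nabla^\cf\eta]=\nabla^{\text{GM}}[\eta]$ of \eqref{connFGM} and the identification of the first covariant derivative of $\theta$ (equivalently, of $v\,du$ via Proposition~\ref{thetader} and \eqref{holder}) with the normalised holomorphic differentials representing $\phi$. The only difference is packaging: the paper matches all Taylor coefficients by iterating $\nabla^\cf_I(v_\alpha du_\alpha)$ over $\nabla^{\text{GM}}_I[\theta]$, whereas you match the first derivative as sections over $\widehat{\cb}_{[\Sigma]}$ and invoke uniqueness of solutions to $\nabla^{\text{GM}}(\cdot)=\phi$ with vanishing initial condition, which is the same argument expressed as an ODE uniqueness statement.
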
 
\begin{proof}
The symplectic form $\Omega_X=-d(v_\alpha du_\alpha)$ on $X$ defines a bundle-valued 1-form $\omega_i\otimes dz^i:T_{[\Sigma]}\cb\to H^0(\Sigma,K_\Sigma)$ which is a section of $\Omega_\cb^1\otimes\bg$ that lives over the 1-form $\phi\in\Gamma(\Omega_\cb^1\otimes\ch)$ via the quotient $\bg\to\ch$.  Hence $\nabla^\cf(v_\alpha du_\alpha)=\omega_i\otimes dz^i$ lives over $\nabla^{\text GM}s=\phi$ where $s\in\Gamma(U_\Sigma,\ch)$ defines
$[\theta]([\Sigma']):=s([\Sigma'])-s([\Sigma])\in\bc^{2g}\cong \ch_\Sigma$.

By \eqref{connFGM}, $\nabla^\cf$ lives above $\nabla^{\text GM}$ so each higher covariant derivative $\nabla^\cf_I(v_\alpha du_\alpha)$ lives over the cohomology class $\nabla^{\text GM}_I[\theta]$.  Hence by \eqref{eq:thetader}, the series $\theta$ lives above the Taylor series for $[\theta]$ which completes the proof.
\end{proof}

\subsubsection{Higher genus.}
For $g>0$, analogous to \eqref{thetaTR} define  $\theta_g\in\Gamma(\widehat{\cb}_{[\Sigma]},G_\Sigma)$ by
\[  
\theta_g=\omega_{g,1}+z^i\oint_{\hat{b}_i}\omega_{g,2}+\frac12z^iz^j\oint_{\hat{b}_i}\oint_{\hat{b}_j}\omega_{g,3}+\frac{1}{3!}z^iz^jz^k\oint_{\hat{b}_i}\oint_{\hat{b}_j}\oint_{\hat{b}_k}\omega_{g,4}-...
\]
Then the cohomology class $[\theta_g]\in \Gamma(\widehat{\cb}_{[\Sigma]},\ch)$ is analytic in $z^1,...,z^g$ and coincides with the analytic expansion of $[\omega_{g,1}]$ due to the following lemma which generalises \eqref{EOvar} to the case $n=0$.
\begin{lemma} \label{varF}
For $h \geq 2$, the function $F_h$ defined in \eqref{hatf} satisfies the relation 
$$\frac{\partial F_h}{\partial z^i}=\oint_{b_i}\omega_{h,1}.$$
\end{lemma}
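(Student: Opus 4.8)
The plan is to differentiate $F_h$ through the residue using the flat connection $\nabla^\cf$, apply the Leibniz rule to the integrand $\psi\,\omega_{h,1}$ of \eqref{hatf}, and then evaluate the two resulting pieces separately: the piece carrying $\nabla^\cf_i\psi$ via the Riemann bilinear relations \eqref{RiemBilRel}, and the piece carrying $\nabla^\cf_i\omega_{h,1}$ via the variation formula \eqref{EOvar} together with the dilaton equation \eqref{dilaton}. The two contributions will differ only by the constant prefactors $-\tfrac{1}{2h-2}$ and $\tfrac{2h-1}{2h-2}$, which sum to $1$, leaving a single normalised $b$-period of $\omega_{h,1}$, i.e. the $n=0$ instance of \eqref{EOvar}.

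First I would move $\partial/\partial z^i$ inside the residue. Writing $\Res_\alpha=\tfrac{1}{2\pi i}\oint_{\gamma_\alpha}$ for a small loop around $\alpha$ that is fixed in the flat frame of $\nabla^\cf$, formula \eqref{concont} gives $\tfrac{\partial}{\partial z^i}\Res_\alpha(\psi\,\omega_{h,1})=\Res_\alpha\nabla^\cf_i(\psi\,\omega_{h,1})$; the motion of the ramification point $\alpha$ inside $X$ is precisely what is absorbed by the covariant, rather than partial, derivative. Leibniz then yields
\[
\frac{\partial F_h}{\partial z^i}=\frac{1}{2h-2}\sum_{du(\alpha)=0}\Res_{p=\alpha}\Big[(\nabla^\cf_i\psi)(p)\,\omega_{h,1}(p)+\psi(p)\,(\nabla^\cf_i\omega_{h,1})(p)\Big].
\]

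For the second summand, apply \eqref{EOvar} with $n=1$ to write $(\nabla^\cf_i\omega_{h,1})(p)=\oint_{q\in\hat b_i}\omega_{h,2}(p,q)$, exchange the residue in $p$ with the $\hat b_i$-integration in $q$, and invoke the dilaton equation \eqref{dilaton} with $n=1$, namely $\sum_\alpha\Res_{p=\alpha}\psi(p)\,\omega_{h,2}(p,q)=(2h-1)\,\omega_{h,1}(q)$; this contributes $\tfrac{2h-1}{2h-2}\oint_{\hat b_i}\omega_{h,1}$. For the first summand, I would observe that $\nabla^\cf_i$ commutes with the relative exterior derivative (being the Lie derivative along the horizontal lift of $\partial/\partial z^i$), so \eqref{holder} gives $d(\nabla^\cf_i\psi)=\nabla^\cf_i(v\,du)=-\omega_i$; hence $\nabla^\cf_i\psi$ is a local primitive of $-\omega_i$, the additive constant being immaterial since $\omega_{h,1}$ is residueless. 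Evaluating $\sum_\alpha\Res_\alpha(\nabla^\cf_i\psi)\,\omega_{h,1}$ by the bilinear relation \eqref{RiemBilRel}, with $\oint_{a_j}\omega_{h,1}=0$ and $\oint_{a_j}\omega_i=\delta_{ij}$, leaves only $-\tfrac{1}{2h-2}\oint_{\hat b_i}\omega_{h,1}$. Adding the two pieces and using $\tfrac{2h-1}{2h-2}-\tfrac{1}{2h-2}=1$ gives $\partial F_h/\partial z^i=\oint_{\hat b_i}\omega_{h,1}$, the normalised $b$-period matching \eqref{EOvar} at $n=0$.

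The main obstacle is the justification of this first step rather than the subsequent calculation: the ramification points $\alpha$ move with the modulus and $\psi$ is only a local primitive of the relative form $v\,du$, so neither $\psi$ nor the loops $\gamma_\alpha$ are canonically fixed. The content of the argument is that passing to the flat frame of $\nabla^\cf$ renders $\gamma_\alpha$ locally constant and converts $\partial/\partial z^i$ into $\nabla^\cf_i$ through \eqref{concont}, after which the commutation $\nabla^\cf_i\circ d=d\circ\nabla^\cf_i$ identifies $\nabla^\cf_i\psi$ cleanly. Once these two points are settled, the remainder is the pair of standard residue manipulations above and the arithmetic cancellation of the prefactor $1/(2h-2)$.
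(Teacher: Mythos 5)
Your proof is correct and follows essentially the same route as the paper's: Leibniz under the residue, the variational formula \eqref{EOvar} plus the dilaton equation \eqref{dilaton} for the $\psi\,\nabla^\cf_i\omega_{h,1}$ term, the Riemann bilinear relations \eqref{RiemBilRel} for the $(\nabla^\cf_i\psi)\,\omega_{h,1}$ term, and the same cancellation $\tfrac{2h-1}{2h-2}-\tfrac{1}{2h-2}=1$. The only discrepancy is the $-\tfrac{1}{2\pi i}$ normalisation in the final answer: you land on $\oint_{\hat{b}_i}\omega_{h,1}$, which is the version consistent with \eqref{EOvar} as stated, whereas the lemma (and the paper's own proof, which silently drops the hat when invoking \eqref{EOvar}) records the unnormalised period $\oint_{b_i}\omega_{h,1}$.
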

\begin{proof}
The proof of \eqref{EOvar} uses \eqref{rec} which is not available in the case of $n=0$.  Instead we must use the
definition of $F_h$ given in \eqref{hatf} for $h>1$  by
\[
F_h=\frac{1}{2h-2}\sum_{du(\alpha) = 0}\mathop{\text{Res}}_{p=\alpha}\psi(p)\omega_{h,1}(p)
\]
where $d\psi=vdu$.

Note that since $\nabla^\cf_id\psi=-\omega_i$ then
$\nabla^\cf_i\psi=-f_i$ where $f_i$ is a primitive of the holomorphic differential $\omega_i$ on $\Sigma-\{a_i,b_i\}$, i.e. $df_i=\omega_i$. Importantly, although $d\psi=vdu$ is only locally defined on $\Sigma$, its variation can be represented by a global holomorphic differential which allows us to take periods along global cycles in $\Sigma$.  Then:
\begin{align*}
(2h-2)\frac{\partial F_h}{\partial z^i}&=\frac{\partial }{\partial z^i}\sum\Res_{p=\alpha}\psi(p)\omega_{h,1}(p)\\
&=\sum\Res_{p=\alpha}\left[\left(\nabla^\cf_i\psi(p)\right)\omega_{h,1}(p)+\psi(p)\left(\nabla^\cf_i\omega_{h,1}(p)\right)\right]\\
&=\sum\Res_{p=\alpha}\left[f_i(p)\omega_{h,1}(p)+\psi(p)\oint_{b_2}\omega_{h,2}(p,p')\right]\\
&=(2h-1)\oint_{b_2}\omega_{h,1}(p')+\sum\Res_{p=\alpha}f_i(p)\omega_{h,1}(p)\\
=(2h-1)&\oint_{b_2}\omega_{h,1}(p')+\sum_{j=1}^g\left(\oint_{a_j}\omega_{h,1}(p)\oint_{b_j}\omega_i(p)-\oint_{b_j}\omega_{h,1}(p)\oint_{a_j}\omega_i(p)\right)\\
=(2h-2)&\oint_{b_2}\omega_{h,1}(p')
\end{align*}
where the third equality uses \eqref{EOvar} and the final two equalities use the Riemann bilinear relations and vanishing of $a$-periods of $\omega_{h,1}$.
\end{proof}
Together with the variational formula, Lemma~\ref{varF} implies the relation
$$\partial_{i_1} \dots \partial_{i_n} F_h=\int_{p_1 \in b_{i_1}} \dots \int_{p_n \in b_{i_n}} \omega_{h,n}(p_1, \dots, p_n).$$

Just as the symmetry of derivatives of the periods of $\theta$, given by $\frac{\partial}{\partial z_i}\w_j=\tau_{ij}$ leads to the potential $F_0$, the same mechanism yields $F_1$.
Applied to $(h,n)=(1,1)$, the variational formula yields
$$\frac{\partial}{\partial z^j}\oint_{b_i}\omega_{1,1}=\oint_{b_i}\oint_{b_j}\omega_{1,2}=\frac{\partial}{\partial z^i}\oint_{b_j}\omega_{1,1}
$$
since $\omega_{1,2}$ is symmetric.  Hence there exists a potential $F_1$ defined up to a constant by
$$\frac{\partial F_1}{\partial z^i}=\oint_{b_i}\omega_{1,1}.$$
We have seen that $F_h$ is defined via a variational formula and via topological recursion together with the dilaton equation for $h\geq 2$.  These definitions are fundamentally different since the variational approach requires knowledge of $F_h$ in a neighbourhood $U\subset\cb$ whereas the topological recursion definition requires only knowledge at a point $b\in\cb$.


  
\subsection{Geometry of the tensor \texorpdfstring{$A_\Sigma$}{A} }  \label{geomA}

The Lagrangian $\cl_{\text{KS}}\subset W$ is defined in a formal neighbourhood of $0\in W$.  A vector field on $\cl_{\text{KS}}$ is a derivation given by a linear combination of $\frac{\partial}{\partial x^i}$ and $\frac{\partial}{\partial y_i}$ with coefficients defined in a formal neighbourhood of $0\in W$.  We present here explicit formulae for vector fields on $\cl_{\text{KS}}$ and relate them to normalised holomorphic differentials $\omega_i$ and
$\widehat{\omega}_i\in\Gamma(\widehat{\cb}_{[\Sigma]},G_\Sigma)$ defined in \eqref{holdifser}.

Coordinates \(\{x^i\}\) on $\cl_{\text{KS}}$ are the restriction of those given in Definition~\ref{darbcoord}. Dual to \(\{dx^i\}\) are the following vector fields on $\cl_{\text{KS}}$
\begin{equation}  \label{vf}
\xi_i=\frac{\partial}{\partial x^i}+ f_{ij}\frac{\partial}{\partial y^j}
    =(0,...,1,...\mid f_{i\,1},f_{i\,2},...)
\end{equation}
defined in any formal neighbourhood of $0\in W$.  The coefficients $f_{ij}$ are functions of $\{x^k\}$ defined in each formal neighbourhood of $0\in W$.  They satisfy the linear system:
    \begin{align*} 
    0&=dH_i(\xi_j)=\xi_j(H_i)=\left(\frac{\partial}{\partial x^j}+f_{jk}\frac{\partial}{\partial y^k}\right)H_i\\
    &=2a_{ijk}x^k+2b_{ij}^ky_k+ f_{jk}(-\delta_{ik}+2b_{i\ell}^kx^\ell+2c_i^{\ell k}y_\ell).\nonumber
    \end{align*} 
    They can be calculated in the $k$th formal neighbourhood of $0\in W$, for any $k$, using the recursive procedure described in \eqref{graph}, hence expressing $\cl_{\text{KS}}$ as the image of $S(x)= (x,y(x))$.  Linearise this to produce 
$$\xi_i=DS\left(\frac{\partial}{\partial x^i}\right)\quad\Rightarrow\quad f_{ij}=2a_{ijk}x^k+(4b_{jk}^ma_{i\ell m}+2b_{ji}^ma_{k\ell m})x^kx^\ell+...\ .$$

\begin{proposition}  \label{vfs}
The vector fields on $\cl_{\text{KS}}$ satisfy the following:
\begin{align}
\xi_i&=\widehat{\omega}_i,\qquad i=1,...,g  \label{xiomega}\\
\nabla^{\cf}_{\widehat{\omega}_i}\widehat{\omega}_j&\mapsto \nabla^{\text{GM}}_{[\omega_i]}[\omega_j]  \label{covFGM}\\
A_\Sigma&\mapsto\bar{A}_\Sigma.  \label{AtoA}
\end{align}
\end{proposition}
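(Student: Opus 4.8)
The plan is to prove the three assertions in sequence, treating the section $\theta$ as the dictionary between the formal Lagrangian $\cl_{\text{KS}}\subset W$ and the formal deformation space $\widehat{\cb}_{[\Sigma]}$, and then obtaining \eqref{AtoA} as the image under symplectic reduction of the covariant-derivative identity \eqref{covFGM}. First I would prove \eqref{xiomega}. By Proposition~\ref{phisec1} the section $\theta$ maps $\widehat{\cb}_{[\Sigma]}$ into $\cl_{\text{KS}}$, and by the coordinate correspondence \eqref{BLcorr} its image is $\cl_{\text{KS}}\cap G_\Sigma$, described by $z^i=x^i$ for $i=1,\dots,g$, by $x^j=0$ for $j>g$, and by $y_i=y_i(x)$ the graph \eqref{graph}. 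Thus $\theta$ is exactly the restriction to $\{x^j=0,\ j>g\}$ of the graph parametrisation $S\colon x\mapsto(x,y(x))$ of $\cl_{\text{KS}}$. Since $\widehat{\omega}_i=D\theta(\partial/\partial z^i)$ and $z^i=x^i$, pushing the coordinate field forward gives $\widehat{\omega}_i=DS(\partial/\partial x^i)$, which is the defining description \eqref{vf} of $\xi_i$; hence $\xi_i=\widehat{\omega}_i$. One can equally match the two explicit series, using $\widehat{\omega}_i|_0=\omega_i=y_i$ from Definition~\ref{darbcoord} and $f_{ij}=2a_{ijk}x^k+\cdots$, but the parametrisation argument is cleaner.

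Next I would prove \eqref{covFGM}. On a formal neighbourhood the connection $\nabla^\cf$ is flat and, expressed on the series side through $\iota^*$, acts as $\partial/\partial z^i$; reading the direction $\widehat{\omega}_i$ through its cohomology class $[\widehat{\omega}_i]=[\omega_i]=\partial/\partial z^i\in T_{[\Sigma]}\cb$ gives $\nabla^\cf_{\widehat{\omega}_i}\widehat{\omega}_j=\nabla^\cf_i\widehat{\omega}_j$. Because $\widehat{\omega}_j$ has constant $a$-periods, $\nabla^\cf_i\widehat{\omega}_j$ has vanishing $a$-periods and therefore lies in $V_\Sigma$; taking its cohomology class and invoking the compatibility \eqref{connFGM} of $\nabla^\cf$ with the Gauss--Manin connection yields $[\nabla^\cf_i\widehat{\omega}_j]=\nabla^{\text{GM}}_i[\omega_j]=\nabla^{\text{GM}}_{[\omega_i]}[\omega_j]$, which is \eqref{covFGM}. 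The whole content is that differentiating the upstairs field $\widehat{\omega}_j$ along $\widehat{\omega}_i=\xi_i$ descends, under $\bg\to\ch$, to differentiating $[\omega_j]$ along $[\omega_i]$.

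Finally I would deduce \eqref{AtoA}. By the construction of Section~\ref{geomA}, $A_\Sigma$ is the value at the basepoint of the covariant derivative of $\xi_j$ along $\xi_i$ taken with the ambient flat connection of the vector space $W$; in the $z$-parametrisation supplied by $\theta$ this covariant derivative is $\partial_{z^i}\widehat{\omega}_j=\nabla^\cf_i\widehat{\omega}_j$, so $A_\Sigma(\xi_i,\xi_j)=\nabla^\cf_{\widehat{\omega}_i}\widehat{\omega}_j$ after \eqref{xiomega}; this is the top row of \eqref{tenscovder}. Dually, by the description in Section~\ref{sec:def}, $\bar{A}_\Sigma([\omega_i],[\omega_j])=\nabla^{\text{GM}}_{[\omega_i]}[\omega_j]$ is the covariant derivative of the constant-$a$-period extension of $[\omega_j]$, the bottom row of \eqref{tenscovder}. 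The left vertical arrow of \eqref{tenscovder} is the quotient $V_\Sigma\to\overline{V}_\Sigma$, and under the identification $h\colon H^0(\Sigma,K_\Sigma)\to L=T_0\cl_{\text{KS}}$, $\omega_i\mapsto\xi_i|_0$, the two rows are intertwined precisely by \eqref{covFGM}. Hence the image of $A_\Sigma$ in $\overline{V}_\Sigma^{\otimes3}$ is $\bar{A}_\Sigma$, which is \eqref{AtoA} and, via \eqref{tenscovder}, the statement \eqref{tensquot} of Theorem~\ref{main}.

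The main obstacle is the reconciliation of the two covariant differentiations: $A_\Sigma$ is built from the second-fundamental-form-type derivative of vector fields tangent to $\cl_{\text{KS}}\subset W$ using the flat connection of $W$, whereas $\bar{A}_\Sigma$ is built from $\nabla^{\text{GM}}$ on $\ch$. These are bridged only after one checks that, in the $z$-coordinates furnished by $\theta$, both the ambient flat derivative and $\nabla^\cf_i$ reduce to $\partial_{z^i}$ applied to the same $W$-valued series $\widehat{\omega}_j$, and that $\nabla^\cf$ lies above $\nabla^{\text{GM}}$ by \eqref{connFGM}. The delicate part is the bookkeeping of where each tensor takes values---$V_\Sigma$ upstairs against $\overline{V}_\Sigma$ downstairs---and the verification that \eqref{tenscovder} commutes under the symplectic-reduction isomorphisms $L\cong V_\Sigma^*$ and $H^0(\Sigma,K_\Sigma)\cong\overline{V}_\Sigma^*$, the transversality $V_\Sigma\oplus L=W$ of \eqref{Vlagcomp} guaranteeing that the relevant derivatives genuinely land in $V_\Sigma$ with no ambiguity.
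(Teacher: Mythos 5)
Your proposal is correct and follows essentially the same route as the paper: it identifies $\xi_i$ with $\widehat{\omega}_i$ by matching the graph parametrisation $S(x)=(x,y(x))$ of $\cl_{\text{KS}}$ with the section $\theta$ under the coordinate correspondence $z^i\leftrightarrow x^i$ of \eqref{BLcorr}, then uses $[\nabla^\cf\eta]=\nabla^{\text{GM}}[\eta]$ from \eqref{connFGM} to descend the covariant derivative $\nabla^\cf_i\widehat{\omega}_j$ (which realises $A_\Sigma$ via $\partial_{x^k}f_{ji}$) to $\nabla^{\text{GM}}_i[\omega_j]$ (which realises $\bar{A}_\Sigma$), exactly as in the paper's proof and the surrounding discussion of \eqref{tenscovder}. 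The only cosmetic difference is that the paper additionally verifies the linearised identity $h(\omega_i)=\partial/\partial x^i$ and the agreement of the first two terms of the series explicitly, which your parametrisation argument subsumes.
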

\begin{proof}
The map $h:H^0(\Sigma,K_\Sigma)\to L$, maps holomorphic differentials normalised over $a$-periods $\{\omega_i\mid i=1,...,g\}$, to its local expansion at $R\subset\Sigma$.
\(h\) has a natural image with respect to the coordinates $\{x^i,y_i\}$:
\begin{equation}  \label{hvec}
\frac{\partial}{\partial x^i}=h(\omega_i),\quad i=1,...,g.
\end{equation}
To see this note that $\{\frac{\partial}{\partial x^i}\}$ are dual to the differentials $\{dx^i\}$, so one needs to calculate the action of $h(\omega_i)$ on $dx^j$.  Since $x^j$ is linear, any vector acts by $\langle v,dx^j\rangle=v\cdot x^j=\Omega_W(v,x^j)$.  Now 
$$\Omega_W(\omega_i,x^j)=\sum_{k=1}^g\left(\oint_{a_k}\omega_i\oint_{\hat{b}_k}x^j-\oint_{\hat{b}_k}\omega_i\oint_{a_k}x^j\right)=\delta_{ij}
$$
proving $h(\omega_i)=\frac{\partial}{\partial x^i}$, $i=1,...,g$.  Since the map $h$ coincides with the linearisation of the section $\theta\in\Gamma(\widehat{\cb}_{[\Sigma]},G_\Sigma)$ evaluated at the point $[\Sigma]\in\cb$, \eqref{hvec} is the specialisation of \eqref{xiomega} to the 1st formal neighbourhood.

The functions $z^i$ on $\ch_\Sigma$ and $x^i$ on $W$ are related as follows.  Under the symplectic quotient, $z^i$ maps to $x^i|_{G_{\Sigma_0}}$ for $i=1,...,g$ since for $\eta\in G_{\Sigma}$, by the Riemann bilinear relations $\langle x^i,\eta\rangle=\oint_{a_i}\eta=z^i([\eta])$.  In a formal neighbourhood of $[\Sigma]\in\cb$ the linearisation $D\theta$ sends the vector field $\frac{\partial}{\partial z^i}$, defined on $\cb$ and hence on the formal neighbourhood of $[\Sigma]\in\cb$, to
$$\frac{\partial\theta}{\partial z^i}:=\widehat{\omega_i}=\sum_{I}\frac{z^I}{|I|!}\oint_{\hat{b}_i}\oint_{\hat{b}_I}\omega_{0,|I|+2}=\omega_i+z^j\oint_{\hat{b}_i}\oint_{\hat{b}_j}\omega_{0,3}+...\ .
$$
Hence the image of $\widehat{\omega_i}$ is obtained by replacing $z^I$ by $x^I$ in $\theta$ to give
$$\widehat{\omega}_i|_{z^j=x^j}=\xi_i|_{G_{\Sigma}}
$$
which is \eqref{xiomega}.  The first two terms of \eqref{xiomega} are
$$\xi_i=\frac{\partial}{\partial x^i}+f_{ij}\frac{\partial}{\partial y^j}
$$
for $f_{ij}=a_{ijk}x^k+...$
the first terms $\omega_i$ and $\frac{\partial}{\partial x^i}$ agree by \eqref{hvec} and the second terms $x^j\oint_{b_i}\oint_{b_j}\omega_{0,3}$ and $a_{ijk}x^k\frac{\partial}{\partial y^j}$ also agree by the following.
  +
       
A variation of the vector field is given by
    $$\frac{\partial}{\partial x^j}\xi_i=(0,...,0,...\mid \frac{\partial}{\partial x^j}f_{i\,1},\frac{\partial}{\partial x^j}f_{i\,2},...)\in V_\Sigma.
    $$
Differentiate the expression for $f_{ij}$ by $x^k$ and take the constant term to get 
    $$\frac{\partial}{\partial x^k}f_{ji}=2a_{ijk}.
    $$
    In other words, the tensor $A_\Sigma$ gives the map
    \[T_0\cl_{\text{KS}}\otimes T_0\cl_{\text{KS}}\to V_\Sigma\]
defined by variation of a vector field of $\cl_{\text{KS}}$ with respect to a vector.  It is a tensor because any vector in $T_0\cl_{\text{KS}}$ canonically extends to a vector field via \eqref{vf}.  The canonical isomorphism $T_0\cl_{\text{KS}}\cong V_\Sigma^*$ means that $A_\Sigma\in V_\Sigma\otimes V_\Sigma\otimes V_\Sigma$.


The Lagrangian $\cl_{\text{KS}}\subset W$ is a formal germ, and its vector fields are derivations on $W$ that annihilate the defining ideal of $\cl_{\text{KS}}$.  The tensor $A_\Sigma$ is defined via the covariant derivative $\nabla^{\cf}_uv$ of vector fields $v\in\Gamma(T\cl_{\text{KS}})$ by vectors $u\in L=T_0\cl_{\text{KS}}$ with respect to the flat connection $\nabla^{\cf}$ induced by the foliation $\cf$.  It defines a tensor on $L\otimes L$ because any vector $v\in L$ extends canonically to a vector field---$v$ is a linear combination of $\frac{\partial}{\partial x^i}$ which are mapped to $\frac{\partial}{\partial x^i}\mapsto \xi_i$ defined by \eqref{vf}.  
\end{proof}

An alternative, non-geometric proof of Theorem~\ref{main} can be obtained from Corollary~\ref{om3cor} combined with the following result.
\begin{proposition}[\cite{KSoAir}]   \label{A=w3}
Given $\Sigma\subset(X,\Omega,\cf)$, we have
\[A_\Sigma=\omega_{0,3}\in V_\Sigma\otimes V_\Sigma\otimes V_\Sigma.
\]
\end{proposition}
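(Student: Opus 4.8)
The plan is to prove $A_\Sigma=\omega_{0,3}$ as elements of $\mathrm{S}^3(V_\Sigma)$ by comparing both sides as symmetric trilinear forms on the tangent space $L=T_0\cl_{\text{KS}}$, using the canonical isomorphism $L\cong V_\Sigma^*$ supplied by $\Omega_W$. Both tensors are fully symmetric---$A_\Sigma$ by the Airy constraints discussed after \eqref{hamlag}, and $\omega_{0,3}$ because the correlators are symmetric---so it suffices to show $A_\Sigma(\cdot,\ell,\ell)=\omega_{0,3}(\cdot,\ell,\ell)\in V_\Sigma$ for every locally holomorphic differential $\ell\in L$ and then polarise, where the two filled slots are contracted against $\ell$ via $\Omega_W$. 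The first observation I would record is that $A_\Sigma(\cdot,\ell,\ell)$ is precisely the second-order Taylor coefficient of the graph \eqref{graph}: writing a point of $\cl_{\text{KS}}$ as $\eta=\ell+v(\ell)$ with $\ell\in L$ and $v(\ell)\in V_\Sigma$, the relation $y_i=a_{ijk}x^jx^k+\dots$ says exactly that the quadratic part $v^{(2)}(\ell)$ equals $A_\Sigma(\cdot,\ell,\ell)$. Thus the task reduces to computing $v^{(2)}(\ell)$ intrinsically.

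I would extract $v^{(2)}(\ell)$ directly from the residue constraints \eqref{rescon1} and \eqref{rescon2} defining $\cl_{\text{KS}}$, working at each $\alpha\in R$ in the coordinate $t$ from Definition~\ref{specoord}, so that $u_\alpha=t^2$, $v_\alpha=t$ and $du_\alpha=2t\,dt$ on $\Sigma$. Substituting $\eta=\ell+v$ with $\ell$ holomorphic and $v=O(\ell^2)$ into \eqref{rescon2} and collecting the quadratic order, the constraint for each $m\ge 0$ reduces to a relation of the shape $[v]_{-2m-2}=\tfrac14[\ell^2]_{-2m}$ between the local Laurent coefficients (with $[\,\cdot\,]_k$ the coefficient of $t^{k}dt$). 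This forces the principal part of $v^{(2)}(\ell)$ at $\alpha$ to be a single double pole with coefficient proportional to $(\ell_0^\alpha)^2$, where $\ell_0^\alpha$ is the value of $\ell/dt$ at $\alpha$, and all deeper polar coefficients to vanish; \eqref{rescon1} confirms these principal parts are skew-invariant. Since an element of $V_\Sigma$ is determined by its principal parts together with vanishing $a$-periods (the normalisation used in the proof of \eqref{Vlagcomp}), this pins down $v^{(2)}(\ell)$ as the unique normalised second-kind differential with the prescribed double poles.

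Finally I would compute $\omega_{0,3}(\cdot,\ell,\ell)$ from the recursion \eqref{rec} specialised to $(h,n)=(0,3)$. Expanding the Bergman kernel near each branch point as $B(p,q)=\big(\sum_{k\ge 0}B^\alpha_k(q)\,t^k\big)dt$, the recursion collapses to $\omega_{0,3}=\sum_\alpha(\mathrm{const})\,B^\alpha_0\otimes B^\alpha_0\otimes B^\alpha_0$, where each $B^\alpha_0\in V_\Sigma$ is the normalised second-kind differential with a double pole at $\alpha$; this also re-exhibits $\omega_{0,3}\in\mathrm{S}^3(V_\Sigma)$. Contracting two factors against $\ell$ via $\Omega_W(B^\alpha_0,\ell)=-\ell_0^\alpha$ yields a normalised second-kind differential with exactly the double poles found for $v^{(2)}(\ell)$; matching normalisations then gives $\omega_{0,3}(\cdot,\ell,\ell)=A_\Sigma(\cdot,\ell,\ell)$, and polarisation gives the proposition. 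As a consistency check on the components with $i,j\le g$, the identical data are produced by the quadratic term of $\theta$ in \eqref{thetaTR}, hence by Corollary~\ref{om3cor}.

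The main obstacle is normalisation bookkeeping rather than a conceptual one: the identity has to land on $A_\Sigma=\omega_{0,3}$ on the nose rather than a scalar multiple, so one must track consistently the factor $-\tfrac12$ in the kernel \eqref{kernel}, the $\tfrac14$ emerging from \eqref{rescon2}, the normalisation \eqref{bnorm} of the $\hat b$-periods, and the symmetrisation convention in $(a_{ijk})=a_{ijk}x^ix^jx^k$. A second point needing care is that the global section $\theta$ only probes the $g$ directions of $\cl_{\text{KS}}\cap G_\Sigma$ and so cannot by itself reach components whose indices exceed $g$; it is the local residue computation above, valid for every $\ell\in L$, that delivers the full tensor identity in $V_\Sigma^{\otimes 3}$. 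One must also check that the locally prescribed principal parts assemble into the same globally normalised differentials on both sides.
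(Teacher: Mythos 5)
Your proposal is correct and follows essentially the same route as the paper: both extract the $A$-tensor from the quadratic part of the residue constraints \eqref{rescon1}--\eqref{rescon2} evaluated on locally holomorphic differentials (your ``second-order coefficient of the graph \eqref{graph}'' is exactly the paper's restriction of the quadratic form $\Res_\alpha \eta\cdot\eta\, u_\alpha^m/du_\alpha$ to $L$), observe that only $m=0$ survives so that each $\alpha$ contributes a pure double pole with coefficient proportional to $(\ell_0^\alpha)^2$, and then match this against the localisation of $\omega_{0,3}$ at $R$. The only cosmetic differences are that you phrase the comparison via polarisation of the symmetric trilinear form and rederive $\omega_{0,3}=\sum_\alpha(\mathrm{const})\,B_0^\alpha{}^{\otimes 3}$ from the recursion \eqref{rec}, whereas the paper reads off $a_{ijk}$ directly and quotes the closed residue formula for $\omega_{0,3}$; both arguments leave the final normalisation-matching at the same level of explicitness.
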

\begin{proof}
The element $\eta\in W$ lives in $\cl_{\text{KS}}$ if it satisfies the residue constraints \eqref{rescon1}. For $u_\alpha=z^2_\alpha$, $v_\alpha=z_\alpha$:
$$\Res\left(\frac{\eta}{du_\alpha}-v_\alpha\right)u_\alpha^mdu_\alpha=0,\quad m\geq 1,
$$
$$\Res\left(\frac{\eta}{du_\alpha}-v_\alpha\right)^2u_\alpha^mdu_\alpha=0,\quad m\geq 0.
$$
To analyse these we choose a new basis of $W$: \[\{x^{k,\alpha},y_{k,\alpha}\mid k\in\bn,\alpha\in R\}\]
where $x^{k,\alpha}$ has a pole of order $k$ at $\alpha\in\Sigma$ and is holomorphic on $\Sigma-\alpha$ and $y_{k,\alpha}=z_\alpha^k$ is defined only locally near $\alpha$ via the local coordinate $z_\alpha$.

The first residue constraint implies 
$$0=\Res_\alpha\left(\frac{\eta}{du_\alpha}-v_\alpha\right)u_\alpha^mdu_\alpha=\Res_\alpha\eta u_\alpha^m=-2m\langle y_{2m-1,\alpha},\eta\rangle
$$
where the last equality uses $u_\alpha=z_\alpha^2$ and $d(u_\alpha^m)=2my_{2m-1,\alpha}$.

The second implies 
\begin{align*}
    0&=\Res\left(\frac{\eta}{du_\alpha}-v_\alpha\right)^2u_\alpha^mdu_\alpha=\Res_\alpha\frac{\eta\cdot\eta }{du_\alpha}u_\alpha^m-2\Res_\alpha\eta z_\alpha u_\alpha^m\\
&=\Res_\alpha\frac{\eta\cdot\eta }{du_\alpha}u_\alpha^m-2\langle y_{2m,\alpha},\eta\rangle
\end{align*} 
which is a linear term $y_{2m,\alpha}$ plus a quadratic term
\[\Res_\alpha\frac{\eta\cdot\eta }{dx}x^m=a_{ijk}^{\alpha\beta\gamma}x^{j,\beta} x^{k,\gamma}+b_{ij\gamma}^{\alpha\beta k}x^{j,\beta} y_{k,\gamma}+c_{i\beta\gamma}^{\alpha jk}y_{j,\beta} y_{k,\gamma}.
\]
The right hand side is the most general quadratic term with respect to the coordinates $x^i_\beta=\langle x^i_\beta,\eta\rangle$ and $y_i^\beta=\langle y_i^\beta,\eta\rangle$.   The coefficients $a_*^*$, $b^*_*$ and $c_*^*$ are $m$-dependent.  To determine the coefficient of $x^j_\beta x^k_\gamma$ simply evaluate on any differential $\eta$ which is locally holomorphic since such $\eta$ annihilates $y_i$, i.e. $\langle y_i^\beta,\eta\rangle=0$.  When $\eta$ is locally holomorphic
\[\Res_\alpha\frac{\eta\cdot\eta }{dx}x^m=0,\quad \text{for }m>0
\]
since $x^m/dx=z^{2m-1}/dz$ has no pole, and nor does each factor of $\eta$.  Hence we are left with the case $m=0$
\[\Res_\alpha\frac{\eta\cdot\eta }{dx}=a_{ijk}^{\beta\gamma}x^j_\beta x^k_\gamma+...
\]
so $a_{ijk}^{\beta\gamma}=\frac14\delta_{ij}\delta_{ik}\delta_{\alpha\beta}\delta_{\alpha\gamma}$. Hence $A_\Sigma=\frac14\sum_\alpha x^1_\alpha\otimes x^1_\alpha\otimes x^1_\alpha$ which agrees with the following formula for $\omega_{0,3}$:
\[\omega_{0,3}(p_1,p_2,p_3)=\sum_\alpha\Res_{p=\alpha}\frac{B(p,p_1)B(p,p_2)B(p,p_3)}{du(p)dv(p)}.
\]
\end{proof}

\subsection{Analytical construction of $\theta$}
The section $[\theta]\in\Gamma(U_\Sigma,\ch)$ in \eqref{cohtheta} together with parallel transport by the Gauss-Manin connection $\nabla^{\text{GM}}$ on $\ch$ defines a local embedding
\[U_\Sigma\hookrightarrow\ch_\Sigma.
\]
The cohomology classes in $\ch_\Sigma$ are represented by meromorphic differentials on $\Sigma$ which is encoded by the surjective map $G_\Sigma\to\ch_\Sigma$.
The section $\theta\in\Gamma(\widehat{\cb}_{[\Sigma]},G_\Sigma)$ in \eqref{thetaTR} defines a map on a formal neighbourhood of $[\Sigma]\in\cb$. The failure to lift the embedding $U_\Sigma\hookrightarrow\ch_\Sigma$ to an embedding $U_\Sigma\to G_\Sigma$ is due to the failure of parallel transport for the connection $\nabla^\cf$.  Following Kontsevich and Soibelman \cite{KSoAir} one can regain parallel transport for the connection $\nabla^\cf$ on a bundle $\bg^0$ related to $\bg$.

Let $\Sigma\subset(X,\Omega,\cf)$ and  $U_\Sigma\subset\cb$ a ball neighbourhood of $[\Sigma]\in\cb$.  Choose a union of open balls in the universal space $D_R\subset Z$, containing the points $R\subset\Sigma\subset X$ where $\cf$ does not meet $\Sigma$ transversally, such that $D_R\cong U_\Sigma\times D^2$ and $U_\Sigma\times\partial D^2$ is tangent to the foliation on $Z$ induced by $\cf$.  The balls are chosen small enough that each component of $D_R$ contains a single point in $R$.
\begin{definition}
  Define the vector space
  \begin{equation} \label{gsigma0}
         G_\Sigma^0=\{ \eta\in H^0(\Omega^1(\Sigma-D_R))\mid\oint_\gamma\eta=0,\ \forall\text{ closed }\gamma\subset\partial(\Sigma- D_R)\}.
     \end{equation}
\end{definition}
Here, $\gamma\subset\partial(\Sigma- D_R)$ means that $\gamma\subset\Sigma- D_R$ and it is homotopic to a boundary component.
On the level of cohomology, $G_\Sigma^0$ behaves like $G_\Sigma$.  In particular, there is a surjective linear map $G_\Sigma^0\to\ch_\Sigma$ obtained by taking the cohomology class $\eta\mapsto[\eta]$. The vector space $G_\Sigma^0$ lives inside a weakly symplectic vector space $W^0$ of differentials defined in annuli around each point of $R\subset\Sigma$ with zero contour integrals around boundary circles.  Further details are in \cite{ChaSei}.

Define a bundle $\bg^0\to U_\Sigma$ with fibre over $[\Sigma']\in U_\Sigma$ given by $G^0_{\Sigma'}$ defined by \eqref{gsigma0} although using $D_R$ for $R\subset\Sigma$ rather than $\Sigma'$. The covariant derivative $\nabla^\cf$ acts on sections of $\bg^0$.  Parallel transport of $\nabla^\cf$ is well-defined on $\bg^0$ by construction.  On the bundle $\bg$, parallel transport is not defined due to the non-existence of solutions to the ODE at points where $\cf$ meets the curve $\Sigma'$ non-transversally, and $\bg^0$ simply removes those points.

Define $\theta^0\in\Gamma(U_\Sigma,\bg^0)$ analogously to the definition of $\theta$ in \eqref{eq:thetader}.
\[\theta^0([\Sigma'])=v_\alpha du_\alpha|_{\Sigma'}-g_\Gamma(v_\alpha du_\alpha)\]
where $g_\Gamma:G_{\Sigma}^0\to G_{\Sigma'}^0$ is defined by parallel transport with respect to $\nabla^\cf$ along a path $\Gamma\subset U_\Sigma$ joining $[\Sigma]$ and $[\Sigma']$.

The residue constraints (\ref{rescon1}) and (\ref{rescon2}) also make sense in the analytic setting and they define a quadratic Lagrangian $\cl^0_{\text{KS}}\subset W^0$. Choose local FD coordinates $(u_\alpha,v_\alpha)$ in $X$.  For any closed boundary component $\gamma\subset\partial (\Sigma-D_R)$ define $\cl^0_{\text{KS}}\subset W^0$ to consist of differentials $\eta\in W^0$ satisfying:
 \begin{alignat}{2}  \label{rescon10}
        &\oint_\gamma\left(v_\alpha-\frac{\eta}{du_\alpha}\right)u_\alpha^mdu_\alpha &= 0,\quad m\geq 1,\\
        &\oint_\gamma\left(v_\alpha-\frac{\eta}{du_\alpha}\right)^2u_\alpha^mdu_\alpha &= 0,\quad m\geq 0. \label{rescon20}
\end{alignat}

An analogue of Theorem~\ref{phisec} holds.
\begin{proposition}   \label{phisec0}
The section $\theta^0\in\Gamma(U_\Sigma,\bg^0)$ satisfies the following properties.
\begin{enumerate}
    \item It takes its values in $\cl^0_{\text{KS}}$.
    \item Its cohomology class $[\theta^0]\in \Gamma(U_\Sigma,\ch)$ coincides with $[\theta]$ defined in \eqref{cohtheta}.
\end{enumerate}
\end{proposition}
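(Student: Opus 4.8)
The plan is to transcribe the formal proof of Theorem~\ref{phisec} (that is, Propositions~\ref{phisec1} and~\ref{phisec2}) into the analytic setting, replacing the formal expansion operator $\iota^*$ by the genuine parallel transport $g_\Gamma$ of the flat connection $\nabla^\cf$. The preliminary point, and the whole reason for passing from $\bg$ to $\bg^0$, is that $\nabla^\cf$ has well-defined, path-independent parallel transport on $\bg^0$: since $\nabla^\cf$ is flat and $U_\Sigma$ is a ball, $g_\Gamma$ depends only on the endpoints of $\Gamma$, so $\theta^0$ is a well-defined section. I would then record the two structural properties of $g_\Gamma$ that drove the formal argument: (i) $g_\Gamma$ is a ring homomorphism, which follows from the Leibniz rule for $\nabla^\cf$ since a product of $\nabla^\cf$-parallel sections is again parallel; and (ii) the functions defining the foliation are parallel, $\nabla^\cf u_\alpha=0$ and $\nabla^\cf(du_\alpha)=0$, so that $g_\Gamma(v_\alpha^k u_\alpha^m du_\alpha)=g_\Gamma(v_\alpha)^k\,u_\alpha^m du_\alpha$.

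For property (1), I would observe that the definition $\theta^0=v_\alpha du_\alpha|_{\Sigma'}-g_\Gamma(v_\alpha du_\alpha)$ gives, on $\Sigma'$, the identity $v_\alpha-\theta^0/du_\alpha=g_\Gamma(v_\alpha)$. Hence by (i) and (ii) the integrands in \eqref{rescon10} and \eqref{rescon20} are $g_\Gamma(v_\alpha u_\alpha^m du_\alpha)$ and $g_\Gamma(v_\alpha^2 u_\alpha^m du_\alpha)$. The contour integral around a boundary component $\gamma=\partial D_\alpha$ (the disk component of $D_R$ about $\alpha$) is preserved by parallel transport: by \eqref{concont} the $z^i$-derivative of $\oint_\gamma g_\Gamma(\eta)$ equals $\oint_\gamma \nabla^\cf_i g_\Gamma(\eta)=0$, so $\oint_\gamma g_\Gamma(\eta)$ is constant in the family and equals its value $\oint_\gamma\eta|_\Sigma$ at $[\Sigma]$; here one uses that the boundary circles form a flat family because $U_\Sigma\times\partial D^2$ is tangent to the foliation on $Z$. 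Since $v_\alpha^k u_\alpha^m du_\alpha$ for $k=1,2$ is holomorphic throughout the disk $D_\alpha$, Cauchy's theorem gives $\oint_\gamma v_\alpha^k u_\alpha^m du_\alpha|_\Sigma=0$, establishing both constraints. This is the exact analytic counterpart of the vanishing-residue step in Proposition~\ref{phisec1}, with Cauchy's theorem on $D_\alpha$ replacing the vanishing of residues of a locally holomorphic differential.

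For property (2), I would use that the cohomology-class map $\bg^0\to\ch$ intertwines $\nabla^\cf$ and $\nabla^{\text{GM}}$, i.e.\ the analogue of \eqref{connFGM}, whose proof involves only periods along global cycles and so applies verbatim to $\bg^0$. Consequently $g_\Gamma$ descends to Gauss-Manin parallel transport on $\ch$, and taking the cohomology class of $\theta^0=v_\alpha du_\alpha|_{\Sigma'}-g_\Gamma(v_\alpha du_\alpha)$ reproduces precisely the definition \eqref{cohtheta} of $[\theta]$. Equivalently, differentiating and using that $g_\Gamma(v_\alpha du_\alpha)$ is $\nabla^\cf$-parallel together with \eqref{holder} shows $\nabla^{\text{GM}}[\theta^0]=\phi$, while $[\theta^0]([\Sigma])=0$ since $g_\Gamma$ is the identity at the base point; by \eqref{theta} these two properties characterise $[\theta]$ uniquely, so $[\theta^0]=[\theta]$. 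This mirrors Proposition~\ref{phisec2}.

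The main obstacle lies not in the formal manipulations, which transcribe directly, but in setting up the analytic framework correctly: one must verify that parallel transport $g_\Gamma$ genuinely exists and is single-valued on $\bg^0$---precisely what fails on $\bg$ at the points where $\cf$ meets the curve non-transversally---and that $g_\Gamma$ retains the ring-homomorphism and period-preservation properties when acting on the annular differentials comprising the weakly symplectic space $W^0$. Care is also needed because $v_\alpha du_\alpha$ is only locally defined near $R$, so one must check that the difference $v_\alpha du_\alpha|_{\Sigma'}-g_\Gamma(v_\alpha du_\alpha)$ lands in $G_{\Sigma'}^0$ and that its contour integrals and cohomology class are well-defined in the sense of \eqref{gsigma0}. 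Once these analytic foundations are in place, properties (1) and (2) follow as transcriptions of Propositions~\ref{phisec1} and~\ref{phisec2}.
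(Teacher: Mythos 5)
Your proposal is correct and follows essentially the same route as the paper: for property (1) you reduce the integrand to $g_\Gamma(v_\alpha^k u_\alpha^m du_\alpha)$ and use invariance of the contour integral under the foliation-induced parallel transport together with Cauchy's theorem on the disk, which is exactly the paper's one-line computation $\oint_\gamma (g_\Gamma(v_\alpha))^k u_\alpha^m du_\alpha = \oint_{\gamma'} v_\alpha^k u_\alpha^m du_\alpha = 0$; for property (2), which the paper leaves implicit, you give the intended analogue of Proposition~\ref{phisec2} via the compatibility \eqref{connFGM} of $\nabla^\cf$ with $\nabla^{\text{GM}}$. (The only caveat, inherited from the paper rather than introduced by you, is an overall sign ambiguity between the definition of $\theta^0$ and the conventions of \eqref{cohtheta} and \eqref{eq:thetader}, which neither your argument nor the paper's resolves explicitly.)
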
 
\begin{proof}
\[\oint_\gamma\left(v_\alpha-\frac{\theta^0}{du_\alpha}\right)^ku_\alpha^mdu_\alpha=\oint_\gamma \left(g_\Gamma(v_\alpha )\right)^ku_\alpha^mdu_\alpha=\oint_{\gamma'}v_\alpha^ku_\alpha^mdu_\alpha=0\]
where $\gamma'\subset\Sigma'$ is obtained by parallel transporting $\gamma\subset\Sigma$ via the foliation.  The final equality uses the holomorphicity of $v_\alpha^ku_\alpha^mdu_\alpha$.  Parallel transport to a holomorphic differential defined along a different fibre is an analogous mechanism to equation \eqref{vanres} in the proof of Proposition~\ref{phisec1}.
\end{proof}
    
    
    
    \begin{appendices}
    \addtocontents{toc}{\protect\setcounter{tocdepth}{1}}
        \section{Variational Formulae}
\label{variation}
Recall from Section~\ref{foliation} that correlators of a curve embedded in a foliated symplectic surface $\Sigma\subset(X,\Omega_X,\cf)$ are defined recursively via \eqref{rec} given by
\begin{align}  \label{rec1}
\omega_{h,n}(p_1, p_S) = \sum_{du(\alpha) = 0} \mathop{\text{Res}}_{p=\alpha}\ & K(p_1, p) \Bigg[ \omega_{h-1,n+1}(p, \sigma_\alpha(p), p_S) \\
&+\hspace{-2mm} \mathop{\sum_{h_1+h_2=h}}_{I \sqcup J = S} \omega_{h_1,|I|+1}(p, p_I) \, \omega_{h_2,|J|+1}(\sigma_\alpha(p), p_J) \Bigg]\nonumber
\end{align}
where $\Sigma$ enters via the recursion kernel $K = K(p_1,p)$ for $p_1 \in \Sigma$ and $p$ in the vicinity of a ramification point defined by
\begin{equation*}
    K(p_1,p) := -\frac{1}{2}\frac{\int_{p' = \sigma_\alpha(p)}^{p' = p}\omega_{0,2}(p_1,p')}{\omega_{0,1}(p) - \omega_{0,1}(\sigma_\alpha(p))}.
\end{equation*}
which is globally defined in $p_1$ in $p$.  It satisfies 
\begin{equation}\label{recursiveapprox}
    K(p_1,p) \sim_{p \approx \alpha} -\frac{1}{2}\frac{B(p_1,p)}{dv(p)du(p)} + \mathrm{holomorphic}.
\end{equation}
In this appendix we prove a variational formula for the topological recursion correlators with respect to vector fields on $\mathcal{B}$.  We begin first with the Rauch variational formula.

\subsection{Rauch variational formula}
Let $\mathcal{B}$ be a family of curves $\Sigma$ embedded in a foliated symplectic surface $(X, \Omega_X, \mathcal{F})$. Choose FD coordinates $(u,v)$ on $X$ in a neighbourhood of $\alpha\in R\subset\Sigma\subset X$ satisfying $(u,v)(\alpha)=(0,0)$.  

\begin{lemma}  \label{Rauch}
The variation of the Bergman kernel $B(p,q)$ on a curve $\Sigma$ in the family $\mathcal{B}$ is given by
\begin{equation}\label{rauchvariationalformula}
    \nabla^\cf_{\frac{\partial}{\partial z^i}}B(p,q) = -\sum_{\alpha\in R}\Res_{r=\alpha}\frac{\omega_i(r)B(p,r)B(r,q)}{du(r)dv(r)}.
\end{equation}
\end{lemma}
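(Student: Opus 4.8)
The plan is to prove the identity by the classical ``determination by singularities'' principle: both sides of \eqref{rauchvariationalformula} are symmetric bidifferentials on $\Sigma\times\Sigma$, and a symmetric bidifferential that is holomorphic everywhere and has vanishing $a$-periods in each variable must vanish (for fixed $q$ it is a holomorphic differential in $p$ with all $a$-periods zero, hence zero). So it suffices to show that the left-hand side $\nabla^\cf_iB$ and the right-hand side have the same analytic structure: poles confined to $R$, vanishing $a$-periods, vanishing residues, and identical principal parts at each $\alpha\in R$.

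First I would pin down the structure of the left-hand side. The $a$-periods vanish by differentiating the normalisation $\oint_{p\in a_j}B(p,q)=0$ and applying \eqref{concont}, which gives $\oint_{p\in a_j}\nabla^\cf_iB=\partial_{z^i}\oint_{p\in a_j}B=0$; the same argument on a small loop encircling $\alpha$ shows $\nabla^\cf_iB$ is residueless at $R$, since the residues of $B$ are zero and locally constant in the moduli. To see that $\nabla^\cf_iB$ is holomorphic away from $R$, including across the diagonal, I would use $u$ as a local coordinate wherever $du\neq0$: there $B=\bigl((u(p)-u(q))^{-2}+h\bigr)du(p)\,du(q)$ with $h$ holomorphic, and since $\nabla^\cf$ fixes $u$ (so $\nabla^\cf_i\,du=0$) the universal double-pole term is annihilated and $\nabla^\cf_iB=(\partial_{z^i}h)\,du(p)\,du(q)$ is holomorphic. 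Hence the poles of $\nabla^\cf_iB$ sit only at $R$.

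The structure of the right-hand side is more transparent: it is manifestly symmetric in $(p,q)$; its $a$-periods vanish because $\oint_{p\in a_j}B(p,r)=0$ can be pulled inside the residue in $r$; and for $p$ away from $\alpha$ the integrand is holomorphic in $r$ so the residue is holomorphic in $p$, with poles in $p$ produced at $\alpha$ only through the diagonal singularity of $B(p,r)$ as $r\to\alpha$.

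The main obstacle is matching the principal parts at each $\alpha\in R$. Here I would work in the local coordinates of Definition~\ref{specoord}, where $\Sigma$ is cut out by $u_\alpha=v_\alpha^2$ and the involution $\sigma_\alpha$ acts by $v_\alpha\mapsto-v_\alpha$. The key local input is that deforming the curve while holding $u$ fixed moves points with velocity $\nabla^\cf_iv_\alpha=-\omega_i/du_\alpha$, obtained from \eqref{holder} together with $\nabla^\cf_i\,du_\alpha=0$; the factor $\omega_i/du_\alpha$ has a pole at $\alpha$ and is exactly what drives the singular variation of $B$ there. Expanding $B$ near the diagonal via \eqref{bidiff} and computing the induced change in its principal part, I would then compare with the Laurent expansion at $\alpha$ of $\Res_{r=\alpha}\frac{\omega_i(r)B(p,r)B(r,q)}{du(r)\,dv(r)}$, in which the diagonal double pole of $B(p,r)$ likewise generates the poles in $p$ at $\alpha$; the involution symmetry $v_\alpha\mapsto-v_\alpha$ guarantees that only the correct skew-invariant part contributes to the residue. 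I expect this local Laurent-coefficient bookkeeping---correctly tracking the factor $1/(du_\alpha\,dv_\alpha)$ and the reparametrisation of the branch point relative to the fixed $u$-coordinate---to be the delicate step; once the principal parts agree, the uniqueness argument closes the proof.
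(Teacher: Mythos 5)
Your proposal is correct in outline but takes a genuinely different route from the paper. The paper does not characterise $\nabla^\cf_iB$ by its singularities: it quotes the classical local Rauch variational formula $\nabla^\cf_{\partial/\partial\lambda_\alpha}B(p,q)=\Res_{r=\alpha}B(p,r)B(r,q)/du_\alpha(r)$ for the variation with respect to the critical values $\lambda_\alpha$ of $u$, computes the Jacobian of the map $\Lambda:(z^1,\dots,z^g)\mapsto(\lambda_\alpha)_{\alpha\in R}$ from $\omega_i=-\nabla^\cf_i(vdu)$ to obtain $\partial\lambda_\alpha/\partial z^i=(\omega_i/dv)(\alpha)$ as in \eqref{Lambdalin}, and concludes by the chain rule, absorbing the factor $\omega_i/dv$ into the residue because $du$ has a simple zero at $\alpha$. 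Your argument is self-contained where the paper's is not: the ``delicate step'' you defer --- matching principal parts at each $\alpha$ --- is precisely the content of the cited local Rauch formula, and it does close. Writing $B(\cdot,q)=h(z_\alpha)\,dz_\alpha$ near $\alpha$ with $z_\alpha^2=u-\lambda_\alpha$ and differentiating at fixed $u$ gives principal part $\tfrac12(\partial_{z^i}\lambda_\alpha)\,h(0)\,dz_\alpha/z_\alpha^2$, while the residue on the right-hand side evaluates (using only the simple zero of $du=2z_\alpha dz_\alpha$, not the skew-invariance you invoke) to $\tfrac12(\omega_i/dv)(\alpha)\,h(0)\,\bigl(z_\alpha^{-2}+\mathrm{holomorphic}\bigr)dz_\alpha$; the two agree up to an overall sign that is fixed by the convention $\nabla^\cf_i(vdu)=-\omega_i$ of \eqref{holder}. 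What your route buys is independence from the literature and an independent check on that overall sign, which in the paper passes through \eqref{Lambdalin} and the intermediate display in the proof and is easy to get wrong; what the paper's route buys is brevity and the reusable statement \eqref{Lambdalin}, which is also needed for the linearisation $D\Lambda$ in Section~\ref{cohft}. If you carry out the Laurent computation explicitly, your proof is complete and arguably more transparent about why the poles of $\nabla^\cf_iB$ are confined to $R$.
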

This formula appeared in various places before \cite{fay1992kernel, KKoNew,eynard_orantin, BHuSpe} but we will provide the version of a proof which works in our setting. 
\begin{proof}
A local Rauch variational formula gives the variation of the Bergman kernel with respect to critical values of a locally defined function.  Choose local FD coordinates $(u_\alpha,v_\alpha)$ in a neighbourhood \(U_\alpha \subset X\) of $\alpha$ satisfying the properties of Definition~\ref{specoord}.  Recall the map $\Lambda:U_\Sigma\to\bc^R$ defined in \eqref{coordtocritval}.
For $[\Sigma']\in U_\Sigma$, define a local coordinate $z_\alpha$ (up to $\pm1$) on $\Sigma'$ by
\begin{equation}  \label{zalpha}
  u|_{\Sigma'}=z_\alpha^2+\lambda_\alpha  
\end{equation}
so that $v|_{\Sigma'}=v(z_\alpha)$.  
Then
$$0=\nabla^\cf_i u=2z_\alpha\nabla^\cf_iz_\alpha+\nabla^\cf_i\lambda_\alpha,\quad\Rightarrow\nabla^\cf_i\lambda_\alpha=-2z_\alpha\nabla^\cf_iz_\alpha.$$
Hence the normalised holomorphic differential satisfies
$$\omega_i=-\nabla^\cf_i(vdu)=-(\nabla^\cf_iv)du=-v'(z_\alpha)(\nabla^\cf_iz_\alpha)du=\frac{v'(z_\alpha)}{2z_\alpha}\frac{\partial\lambda_\alpha}{\partial z^i}2z_\alpha dz_\alpha=\frac{\partial\lambda_\alpha}{\partial z^i}dv.
$$
Thus the linearisation $D\Lambda:\bc^g\to\bc^R$ is given by
\begin{equation}  \label{Lambdalin}
    \frac{\partial}{\partial z^i}\lambda_\alpha(z^1,...,z^g)=\left(\frac{\omega_i}{dv}\right)(\alpha).
\end{equation} 
The local Rauch variational formula \cite{RauWei, KKoNew} is
\begin{equation}\label{rauchvariation}
    \nabla_{\frac{\partial}{\partial\lambda_\alpha}}^{\mathcal{F}}B(p,q) = \Res_{r=\alpha}\frac{B(p,r)B(r,q)}{du_\alpha(r)}
\end{equation}
hence
$$\nabla^\cf_{\frac{\partial}{\partial z^i}}B(p,q) = \sum_{\alpha\in R}\frac{\omega_i(r)}{dv_\alpha(r)}\Res_{r=\alpha}\frac{B(p,r)B(r,q)}{du(r)}
$$
and since the zero $du(\alpha)=0$ is simple, \eqref{rauchvariationalformula} follows.
\end{proof}

\subsection{Variation of correlators}

Eynard and Orantin proved a formula for the variation of topological recursion correlators $\omega_{h,n}$ in \cite{eynard_orantin}.  We include the proof here for completeness since the definition of a spectral curve in this paper is slightly different to that in \cite{eynard_orantin}.
\begin{proposition}
For $\Sigma\subset X$ and $\frac{\partial}{\partial z^i}\in T_{[\Sigma]}\mathcal{B}$: 
\begin{equation}\label{omegagnvariationformula}
    \nabla^\cf_{\frac{\partial}{\partial z^i}}\omega_{h,n}(p_1,\cdots,p_n) = -\frac{1}{2\pi i}\oint_{p_{n+1}\in b_i}\omega_{h,n+1}(p_1,\cdots,p_n,p_{n+1})
\end{equation}
\end{proposition}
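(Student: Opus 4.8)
The plan is to prove \eqref{omegagnvariationformula} by induction on $2h-2+n$, following Eynard--Orantin, with the covariant derivative $\nabla^\cf$ in the role of their deformation derivative and the contour-commutation property \eqref{concont} used to move $\nabla^\cf_i$ past residues and $b$-period integrals. For the base case $(h,n)=(0,2)$, where $\omega_{0,2}=B$, I would combine Lemma~\ref{Rauch} with the explicit formula $\omega_{0,3}(p_1,p_2,p_3)=\sum_\alpha\Res_{r=\alpha}\frac{B(r,p_1)B(r,p_2)B(r,p_3)}{du(r)dv(r)}$ from Proposition~\ref{A=w3}. Integrating the last variable over $\hat b_i$ and using the standard period relation $\oint_{p_3\in b_i}B(r,p_3)=2\pi i\,\omega_i(r)$ turns the third $B$ into the normalised holomorphic differential, so that $\oint_{p_3\in\hat b_i}\omega_{0,3}(p_1,p_2,p_3)=-\sum_\alpha\Res_{r=\alpha}\frac{\omega_i(r)B(r,p_1)B(r,p_2)}{du(r)dv(r)}$, which is exactly $\nabla^\cf_i B(p_1,p_2)$ by \eqref{rauchvariationalformula}. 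This settles the base of the induction.

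For the inductive step I would apply $\nabla^\cf_i$ to the recursion \eqref{rec} for $\omega_{h,n}$ and use the Leibniz rule, first moving $\nabla^\cf_i$ inside the sum over ramification points and inside each residue $\Res_{p=\alpha}$ by \eqref{concont} (a small circle about $\alpha$ may be held fixed under the deformation since the pole stays inside). This yields two families of terms: those in which $\nabla^\cf_i$ hits the bracket of lower correlators, and those in which it hits the recursion kernel $K(p_1,p)$. On the bracket, the inductive hypothesis applies to each factor $\omega_{h',n'}$ that appears, since every such factor satisfies $2h'-2+n'<2h-2+n$ (the stability condition excludes $\omega_{0,1}$), replacing $\nabla^\cf_i\omega_{h',n'}$ by $\oint_{p_{n+1}\in\hat b_i}\omega_{h',n'+1}$. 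These reassemble, term by term, into precisely the contributions of the recursion for $\omega_{h,n+1}$ in which the new point $p_{n+1}$ sits as a spectator inside one of the lower correlators.

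It then remains to match the kernel contribution $\nabla^\cf_i K(p_1,p)$ with the remaining terms of $\oint_{p_{n+1}\in\hat b_i}\omega_{h,n+1}$, namely those in which $p_{n+1}$ is tied to $p$ or to $\sigma_\alpha(p)$ through a factor $\omega_{0,2}=B$. This is the heart of the argument and the step I expect to be the main obstacle. Using $\oint_{p_{n+1}\in\hat b_i}B(p,p_{n+1})=-\omega_i(p)$, these special terms collapse to $-\sum_\alpha\Res_{p=\alpha}K(p_1,p)\big(\omega_i(p)\,\omega_{h,n}(\sigma_\alpha(p),p_S)+\omega_i(\sigma_\alpha(p))\,\omega_{h,n}(p,p_S)\big)$, and I must show that $\nabla^\cf_i K$, evaluated against the bracket under $\Res_{p=\alpha}$, reproduces exactly this expression. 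Since $K$ carries $B$ in its numerator, Lemma~\ref{Rauch} computes the variation of that numerator, while the denominator $(v(p)-v(\hat p))\,du(p)$ is differentiated using $\nabla^\cf_i(v\,du)=-\omega_i$ from \eqref{holder} together with the $z^i$-dependence of the local involution $p\mapsto\hat p$. The delicate point is to verify that the holomorphic ambiguities in $u$, $v$ and the local coordinate drop out inside the residue, in the same manner as in \eqref{vanres}, leaving only the stated residues.

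Collecting the spectator terms from the bracket variation and the paired terms from the kernel variation gives the complete recursion for $\omega_{h,n+1}$ with $p_{n+1}$ integrated over $\hat b_i$, which closes the induction. Throughout, interchanging $\oint_{p_{n+1}\in\hat b_i}$ with $\Res_{p=\alpha}$ is legitimate because $K(p_1,p)$ is independent of $p_{n+1}$ and the two contours are disjoint, with the $\hat b_i$-cycle chosen away from $R$; the poles of $\omega_{h,n+1}$ in $p_{n+1}$ at the ramification points are residueless, so the $\hat b_i$-period is well defined. I anticipate that the bookkeeping of signs (arising from the normalisation $\oint_{\hat b_i}=-\tfrac{1}{2\pi i}\oint_{b_i}$) and the symmetrisation under $p\mapsto\sigma_\alpha(p)$ will be the fiddliest parts of the kernel computation, but the structural matching above should determine the result.
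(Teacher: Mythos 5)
Your strategy is the same as the paper's: induction on $2h-2+n$, with the base case $(h,n)=(0,2)$ obtained by combining the Rauch formula \eqref{rauchvariationalformula} with the period relation $\oint_{p_3\in b_i}B(r,p_3)=2\pi i\,\omega_i(r)$ and the recursion for $\omega_{0,3}$, and the inductive step obtained by applying $\nabla^\cf_i$ to \eqref{rec} and splitting into bracket terms (handled by the inductive hypothesis) and kernel terms (which must reproduce the contributions of $\omega_{h,n+1}$ in which $p_{n+1}$ is attached to $p$ or $\sigma_\alpha(p)$ through $\omega_{0,2}$). Your identification of which terms must match which is correct, including the observation that $\oint_{p_{n+1}\in\hat b_i}B(p,p_{n+1})=-\omega_i(p)$ converts the special terms of the recursion for $\omega_{h,n+1}$ into the expression you display.

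However, the step you flag as ``the heart of the argument and the main obstacle'' is precisely where the paper does its real work, and your sketch does not supply it. The paper does not differentiate $K$ in isolation; it establishes the identity for $\nabla^\cf_i\bigl(\sum_{\alpha}\Res_{q=\alpha}K(p,q)f(q)\bigr)$ by (i) rewriting the Rauch formula for the numerator $E_q(p)$ as a residue against $K$, (ii) differentiating the denominator $\Omega(q)=vdu(q)-vdu(\sigma_\alpha(q))$ via $\nabla^\cf_i(vdu)=-\omega_i$, and then (iii) exchanging the order of the resulting double residues $\Res_{r=\beta}\Res_{q=\alpha}$, which picks up extra contributions at $r=q$ and $r=\sigma_\alpha(q)$ from the poles of $E_q(r)$. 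It is exactly these extra contributions that cancel the denominator-variation term $\frac{E_q(p)}{\Omega(q)^2}\bigl(\omega_i(q)-\omega_i(\sigma_\alpha(q))\bigr)f(q)$, leaving only $\sum_\alpha\Res_{q=\alpha}K(p,q)\nabla^\cf_i f(q)$ plus the double-residue term that becomes the $\omega_{0,2}(\cdot,p_{n+1})$ contributions. Your plan of ``Rauch for the numerator, $\nabla^\cf_i(vdu)=-\omega_i$ for the denominator'' names the right ingredients, but without the residue-exchange cancellation the two families of terms do not visibly combine, so as written the inductive step has a genuine gap at its central point.
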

\begin{proof}
The proof of this formula uses the Rauch variational formula in Lemma~\ref{Rauch} and follows exactly the proof in \cite[Theorem 5.1]{eynard_orantin}.  We will prove it by induction on $2h-2+n$.  The basic idea is simple---apply $\nabla^\cf_{\frac{\partial}{\partial z^i}}=\nabla^\cf_i$ to \eqref{rec1}.  Most terms of the covariant derivative are obtained immediately from the inductive hypothesis and it remains to understand variation of the kernel $K(p_1,p)$.   


Rewrite (\ref{rauchvariationalformula}) as follows:
\begin{align*}
    \nabla^\cf_i B(p,q) &= -\sum_{\alpha \in R}\Res_{r=\alpha}\frac{B(p,r)}{dv(r)du(r)}B(r,q)\omega_i(r)\\
    &= \sum_{\alpha \in R}\Res_{r=\alpha}\frac{B(p,r)}{dv(r)du(r)}B(\sigma_\alpha(r),q)\omega_i(r)\\
    &= -2\sum_{\alpha \in R}\Res_{r=\alpha}K(p,r)B(\sigma_\alpha(r),q)\omega_i(r)\\
    &= -\sum_{\alpha \in R}\Res_{r=\alpha}K(p,r)\left(B(\sigma_\alpha(r),q)\omega_i(r) + B(r,q)\omega_i(\sigma_\alpha(r))\right)
\end{align*}
where the second equality uses the fact that $B(r,q)+B(\sigma_\alpha(r),q)$ vanishes at $r=\alpha$ which cancels the simple pole of the integrand, the third equality uses $\displaystyle\Res_{r=\alpha}\tfrac{B(p,r)}{dv(r)du(r)}f(r)= -2\Res_{r=\alpha}K(p,r)f(r)$ for $f$ holomorphic at $\alpha$ and the final equality uses symmetry.

To simplify the notation, in a neighbourhood of $\alpha\in\Sigma$  define
\begin{equation*}
    E_q(p) := -\frac{1}{2}\int_{q' = \sigma_\alpha(q)}^{q' = q}\omega_{0,2}(p,q'), \qquad \Omega(p) := vdu(p) -vdu (\sigma_\alpha(p))
\end{equation*}
so that $K(p,q) = \frac{E_q(p)}{\Omega(q)}$. By integrating (\ref{rauchvariationalformula}) from $q' = \sigma_\alpha(q)$ to $q' = q$ along a contour that does not intersect the ramification point $r_\alpha$, we have
\begin{align*}
    \nabla^\cf_i E_q(p) &= 2\sum_{\alpha \in R}\Res_{r=\alpha}K(p,r)E_q(r)\omega_i(r)\\
    &= -\sum_{\alpha \in R}\Res_{r=\alpha}K(p,r)(E_q(\sigma_\alpha(r))\omega_i(r) + E_q(r)\omega_i(\sigma_\alpha(r))).
\end{align*}
If $f = f(q)$ is any function then we have \cite[Lemma 5.1]{eynard_orantin}:
\begin{align*}
    \nabla^\cf_i &\left(\sum_{\alpha \in R}\Res_{q=\alpha}K(p,q)f(q)\right) = \sum_{\alpha \in R}\Res_{q=\alpha}\frac{E_q(p)}{\Omega(q)}\nabla^\cf_i f(q)\\
     +& \sum_{\alpha\in R}\Res_{q=\alpha}\Big( 2\sum_{\beta \in R}\Res_{r=\beta}\frac{E_r(p)}{\Omega(r)}\frac{E_q(r)}{\Omega(q)}\omega_i(r)f(q)- \frac{E_q(p)}{(\Omega(q))^2}(\omega_i(q) - \omega_i(\sigma_{\alpha}(q)))f(q)\Big)\\
    &= \sum_{\alpha \in R}\Res_{q=\alpha}\frac{E_q(p)}{\Omega(q)}\nabla^\cf_i f(q) - \sum_{\alpha\in R}\Res_{q=\alpha}\frac{E_q(p)}{(\Omega(q))^2}(\omega_i(q) - \omega_i(\sigma_{\alpha}(q)))f(q)\\
    & + 2\sum_{\alpha \in R}\left(\sum_{\beta \in R}\Res_{r=\beta}\Res_{q=\alpha} - \Res_{q=\alpha}\Res_{r=q} - \Res_{q=\alpha}\Res_{r=\sigma_\alpha(q)}\right) \frac{E_r(p)}{\Omega(r)}\frac{E_q(r)}{\Omega(q)}\omega_i(r)f(q)\\
    &= \sum_{\alpha \in R}\Res_{q = \alpha}K(p,q)\nabla^\cf_i f(q) + 2\sum_{\alpha, \beta \in R}\Res_{r=\alpha}\Res_{q=\beta}K(p,r)K(r,q)\omega_i(r)f(q)\\
    &= \sum_{\alpha \in R}\Res_{q = \alpha}K(p,q)\nabla^\cf_i f(q)\\
    &- \sum_{\alpha, \beta \in R}\Res_{r=\alpha}\Res_{q=\beta}\Big(K(p,r)\big(K(\sigma_\beta(r),q)\omega_i(r)
    + K(r,q)\omega_i(\sigma_\beta(r))\big)f(q)\Big).
\end{align*}
We are now ready to prove the variational formula (\ref{omegagnvariationformula}), which we will do so by induction on $2h-2+n$. The base case $(h,n) = (0,2)$ uses
the Rauch variational formula:
\begin{align*}
    \nabla^\cf _i\omega_{0,2}(p_1,p_2) &= \nabla^\cf _iB(p_1,p_2) = -\sum_{\alpha \in R}\Res_{r=\alpha}\frac{\omega_i(r)B(r,p_1)B(r,p_2)}{du_\alpha(r)dv_\alpha(r)}\\
    &= -2\sum_{\alpha \in R}\Res_{r=\alpha}K(p_1,r)\omega_{0,2}(r,p_2)\left(\frac{1}{2\pi i}\oint_{p_3\in b_i}\omega_{0,2}(r,p_3)\right)\\
    &= -\frac{1}{2\pi i}\oint_{p_3\in b_i}\omega_{0,3}(p_1,p_2,p_3),
\end{align*}
where we have used (\ref{recursiveapprox}) together with the recursive formula for $\omega_{0,3}$ in the second-last and last equalities respectively. 

Proceeding via induction, given $(h,n)$ we shall assume that (\ref{omegagnvariationformula}) holds for all $(h',n')$ such that $2h'-2+n' < 2h-2+n$. Then by applying $\nabla^\cf_i $ to \eqref{rec1} we have
\begin{align*}
    &\nabla^\cf _i\omega_{h,n}(p_1,\cdots,p_n) = -\frac{1}{2\pi i}\sum_{\alpha, \beta \in R}\Res_{r=\alpha}\Res_{p=\beta}K(p_1,r)\\
    &\qquad\times\left(K(\sigma_\alpha(r),p)\oint_{p_{n+1}\in b_i}\omega_{0,2}(r,p_{n+1}) + K(r,p)\oint_{p_{n+1}\in b_i}\omega_{0,2}(\sigma_\alpha(r),p_{n+1})\right)\\
    &\quad\times\left(\omega_{h-1,n+1}(p,\sigma_\alpha(p),p_2,\cdots,p_n) + \hspace{-8mm}\sum_{\substack{h_1 + h_2 = h\\I_1\coprod I_2 = \{2,...,n\}}}\hspace{-8mm}\omega_{h_1,1 + |I_1|}(p,p_{I_1})\omega_{h_2,1+|I_2|}(\sigma_\alpha(p),p_{I_2})\right)\\
    &\qquad - \frac{1}{2\pi i}\sum_{\alpha \in R}\Res_{p = \alpha}K(p_1,p)\Bigg(\oint_{p_{n+1}\in b_i}\omega_{h-1,n+2}(\sigma_\alpha(p), p, p_2,\cdots, p_n, p_{n+1})\\
    &\qquad\qquad + \sum_{\substack{h_1 + h_2 = h\\I_1\coprod I_2 = \{2,...,n\}}}\oint_{p_{n+1}\in b_i}\omega_{h_1,|I_1|+2}(p,p_{I_1},p_{n+1})\omega_{h_2,|I_2|+1}(\sigma_\alpha(p),p_{I_2})\\
    &\qquad\qquad + \sum_{\substack{h_1 + h_2 = h\\I_1\coprod I_2 = \{2,...,n\}}}\omega_{h_1,|I_1|+1}(p,p_{I_1})\oint_{p_{n+1}\in b_i}\omega_{h_2,|I_2|+2}(\sigma_\alpha(p),p_{I_2},p_{n+1})    
    \Bigg)\\
    &= -\frac{1}{2\pi i}\oint_{p_{n+1}\in b_i}\sum_{\alpha \in R}\Res_{r=\alpha}K(p_1,p)\Bigg(\omega_{h-1,n+2}(\sigma_\alpha(p),p,p_2,\cdots,p_n,p_{n+1})\\
    &\quad + \omega_{h,n}(\sigma_\alpha(p),p_2,\cdots,p_n)\omega_{0,2}(p,p_{n+1}) + \omega_{h,n}(p,p_2,\cdots,p_n)\omega_{0,2}(\sigma_\alpha(p),p_{n+1})\\
    &\qquad + \sum^{*}_{\substack{h_1 + h_2 = h\\I_1\coprod I_2 = \{2,...,n,n+1\}}}\omega_{h_1,|I_1|+1}(p,p_{I_2})\omega_{h_2,|I_2|+1}(\sigma_\alpha(p),p_{I_2})\Bigg)\\
    &= -\frac{1}{2\pi i}\oint_{p_{n+1}\in b_i}\omega_{h,n+1}(p_1,\cdots,p_n,p_{n+1}).
\end{align*}
Where \small{$\displaystyle\sum^{*}$} \normalsize indicates that we exclude all terms involving $\omega_{0,2}(.,p_{n+1})$ from the summation.
\end{proof}
    
    \end{appendices}
    
    \printbibliography
     
\end{document}